\tikzset{middlearrow/.style={
		decoration={markings,
			mark= at position 0.5 with {\arrow{#1}} ,
		},
		postaction={decorate}
	}
}
\tikzset{
	on each segment/.style={
		decorate,
		decoration={
			show path construction,
			moveto code={},
			lineto code={
				\path [#1]
				(\tikzinputsegmentfirst) -- (\tikzinputsegmentlast);
			},
			curveto code={
				\path [#1] (\tikzinputsegmentfirst)
				.. controls
				(\tikzinputsegmentsupporta) and (\tikzinputsegmentsupportb)
				..
				(\tikzinputsegmentlast);
			},
			closepath code={
				\path [#1]
				(\tikzinputsegmentfirst) -- (\tikzinputsegmentlast);
			},
		},
	},
	mid arrow/.style={postaction={decorate,decoration={
				markings,
				mark=at position .5 with {\arrow[#1]{stealth}}
	}}},
}
\numberwithin{equation}{section}
\theoremstyle{plain}
\newtheorem{thm}{\protect\theoremname}[section]
\theoremstyle{remark}
\newtheorem{rem}[thm]{\protect\remarkname}
\theoremstyle{definition}
\newtheorem{defn}[thm]{\protect\definitionname}
\theoremstyle{plain}
\newtheorem{lem}[thm]{\protect\lemmaname}
\theoremstyle{plain}
\theoremstyle{remark}
\newtheorem*{rem*}{\protect\remarkname}
\newtheorem{ex}[thm]{\protect\examplename}
\theoremstyle{plain}
\newtheorem{cor}[thm]{\protect\corollaryname}
\let\myTOC\tableofcontents
\renewcommand\tableofcontents{%
	\frontmatter
	\pdfbookmark[1]{\contentsname}{}
	\myTOC
	\mainmatter }
\DeclareMathAlphabet{\mathpzc}{OT1}{pzc}{m}{it}
\date{ }
\def\definitionname{Definition}
\def\theoremname{Theorem}
\def\propositionname{Proposition}
\def\lemmaname{Lemma}
\def\corollaryname{Corollary}
\def\remarkname{Remark}
\def\examplename{Example}
\providecommand{\keywords}[1]{\textbf{Key words:} #1}
\providecommand{\AMSclass}[1]{\textbf{AMS subject classification:} #1}
\providecommand{\definitionname}{Definition}
\providecommand{\lemmaname}{Lemma}
\providecommand{\propositionname}{Proposition}
\providecommand{\remarkname}{Remark}
\providecommand{\theoremname}{Theorem}
\newcommand{\N}{\mathbb N}
\newcommand{\R}{\mathbb R}
\newcommand{\cP}{\mathcal P}
\newcommand{\cS}{\mathcal S}
\newcommand{\cM}{\mathcal M}
\newcommand{\cT}{\mathcal T}
\newcommand{\cV}{\mathcal V}
\newcommand{\ccV}{\mathscr V}
\newcommand{\cJ}{\mathcal J}
\newcommand{\cU}{\mathcal U}
\newcommand{\cW}{\mathcal W}
\newcommand{\zV}{\mathpzc V}
\newcommand{\zv}{\mathpzc v}
\newcommand{\zc}{\mathpzc c}
\newcommand{\zC}{\mathpzc C}
\newcommand{\zU}{\mathpzc U}
\newcommand{\zu}{\mathpzc u}
\newcommand{\zW}{\mathpzc W}
\newcommand{\zw}{\mathpzc w}
\newcommand{\tu}{\mathfrak u}
\newcommand{\supp}{\operatorname{supp}} 
\def\ds{\displaystyle}
\definecolor{vio}{RGB}{59,18,97}
\begin{document}
\author{Manh-Khang Dao\textsuperscript{}\thanks{\noindent\textsuperscript{}Department of Mathematics, KTH Royal Institute of Technology, 100 44 Stockholm, Sweden. E-mail address: mkdao@kth.se} \@ and  Boualem Djehiche\textsuperscript{}\thanks{\noindent\textsuperscript{}Department of Mathematics, KTH Royal Institute of Technology, 100 44 Stockholm, Sweden. E-mail address: boualem@kth.se}}

\title{Hamilton-Jacobi equations for optimal control on multidimensional
junctions with entry costs\textsuperscript{}\thanks{\noindent\textsuperscript{}{\bf Acknowledgements.} This research was supported by the Swedish Research Council grant (2016-04086). 
This article is a continuation of the works in first author's dissertation
and he appreciate his advisors Yves Achdou, Olivier Ley and Nicoletta Tchou who suggested the topic
to him. 
The authors would like to thank Olivier Ley  for his insightful remarks. The authors are also grateful to the referees for careful reading and valuable comments.}}

\date{\today}

\maketitle

\begin{abstract}
We consider an infinite horizon control problem for dynamics constrained to remain on a multidimensional junction with entry costs. We derive the associated system of Hamilton-Jacobi equations (HJ), prove the comparison principle and that the value function of the optimal control problem is the unique viscosity solution of the HJ system. This is done under the usual strong controllability assumption and also under a weaker condition, coined  'moderate controllability assumption'.
\end{abstract}
\keywords{Optimal control, multidimensional junctions, Hamilton-Jacobi equation, viscosity solutions, switching cost}\\
\AMSclass{34H05, 35F21, 49L25, 49J15, 49L20, 93C30}

\section{Introduction}
In this paper, we consider an infinite horizon control problem for the dynamics of an agent constrained to remain on a multidimensional junction on $\R^3$, i.e. a union of $N\ge 2$ half-planes $\cP_i$ which share a straight line $\Gamma$, see Figure~\ref{fig:junction}. The controlled dynamics are given by a system of ordinary differential equations, where in each $ \cP_i $ it is given by a drift $f_i(\cdot,\cdot)$ and to which is associated a running cost $\ell_i(\cdot,\cdot)$. Moreover, the agent pays a cost $c_i(\cdot)$ each time it enters the half-plane $\cP_i$ from $\Gamma$. The goal of this work is to study the properties of the value function of this control problem and derive the associated Hamilton-Jacobi equation (HJ) under some regularity conditions on the involved dynamics, running and entry cost functions. 
Although we will not discuss it in this paper, the optimal control problem with exit costs, i.e. instead of paying an entry cost each time the agent enters the half-plane, it pays a cost each time it exits it, can be solved similarly.
Oudet~\cite{Oudet2016_these} considers a similar optimal control problem but without entry or exit costs from the interface to the half-planes.

When the interface $\Gamma$ is reduced to a point, the junction becomes a simple network with one vertex, i.e. a 1-dimensional junction. Optimal control problems (without entry costs) in which the set of admissible states are networks attracted a lot of interest in recent years. Being among the first papers discussing this topic, Achdou, Camilli, Cutr\` i  \& Tchou~\cite{ACCT2013}, derived an HJ equation associated to an infinite horizon optimal control on networks and proposed a suitable notion of viscosity solutions, where the admissible test-functions whose restriction to each edge are $ C^1 $ are applied. Independently and at the same time, Imbert, Monneau \& Zidani~\cite{IMZ2013} proposed an equivalent notion of viscosity solution for studying an HJ approach to junction problems and traffic flows. Both~\cite{ACCT2013} and~\cite{IMZ2013} contain first results on the comparison principle. In the particular case of  eikonal equations on networks, Schieborn \& Camilli~\cite{SC2013} considered a less general notion of viscosity solution. For that later case, Camilli \& Marchi~\cite{CM2013} showed the equivalence between the definitions notion of viscosity solution given in~\cite{ACCT2013,IMZ2013} and \cite{SC2013}. Optimal control on networks with entry costs (and exit costs) has recently been considered by the first author~\cite{Dao2019}.

An important feature of the effect of the entry costs is a possible discontinuity of the value function. Discontinuous solutions of HJ equations have been studied by various authors, see for example Barles~\cite{Barles1993} for general open domains in $ \R^d $, Frankowska \& Mazzola~\cite{FM2013a} for state constraint problems, and in particular Graber, Hermosilla \& Zidani~\cite{GHZ2017} for a class of HJ equations on networks. 

In the case considered in the present work, the effect of entry costs induces a discontinuity of the value function $ \cV $ at the interface $\Gamma$, while it is still continuous on each $ \cP_i \backslash \Gamma $. This allows us to adopt the techniques which apply to the continuous solution case in the works of Barles, Briani \& Chasseigne~\cite{BBC2013} and  Oudet~\cite{Oudet2016_these}, where we split the value function $\cV $ into the collection $\{v_1,\ldots,v_N\}$ of functions, where each $ v_i $ is continuous function defined on $ \cP_i $ and satisfies
\[ 
v_i (x) =
\begin{cases}
\cV(x) & \text {if } x\in \cP_i \backslash \Gamma,\\
\ds \lim_{(\cP_i \backslash \Gamma) \ni z \rightarrow x} \cV(z) & \text{if } x\in \Gamma.
\end{cases}
\]
We note that the existence of the limit in the above formula comes from the fact that the value functions is Lipschitz continuous on the neighborhood of $ \Gamma $ (see Lemma~\ref{lem:continuity}), thanks to the 'strong controllability assumption', which is introduced below.
The first main result of the present work is to show that $(v_1,\ldots,v_N,\cV|_\Gamma)$ is a viscosity solution of the following system
\begin{align}
\lambda v_i (x) + H_i \left( x, \partial v_i (x) \right) = 0, & \quad \text{if }x\in \cP_i \backslash \Gamma,\label{011}\\
\lambda v_i (x) + \max \left\{ H_i^+ \left( x, \partial v_i (x) \right), -\lambda \cV|_\Gamma (x) \right\} = 0, & \quad \text{if }x\in  \Gamma, \label{012}\\
\lambda \cV|_{\Gamma}(x) + \max \left\{ -\lambda \min_{i=1,\ldots,N} \{v_i (x) + c_i(x) \} , H_\Gamma \left( x,\dfrac{\partial \cV|_{\Gamma}}{\partial e_0}  (x) \right) \right\}=0,& \quad \text{if } x\in \Gamma, \label{014}
\end{align}
where $ H_i $ is the Hamiltonian corresponding to the half-plane $ \cP_i $, $ \cV|_\Gamma $ is the restriction of our value function on the interface
and $ H_{\Gamma} $ is the Hamiltonian defined on $\Gamma$. At  $x \in \Gamma $, the definition of the Hamiltonian has to be particular, in order to consider all the possibilities when $x$ is in the neighborhood of $ \Gamma $.
More specifically, 
\begin{itemize}
	\item the term $ H_i^+ \left( x, \partial u_i (x) \right) $ accounts for the situation in which the trajectory does not leave $ \cP_i $,
	\item the term $ \min_{i=1,\ldots,N} \{v_i (x) + c_i(x) \}  $ accounts for situations in which the trajectory enters $\cP_k$ where $ v_k(x) + c_k(x) = \min_{i=1,\ldots,N} \{v_i (x) + c_i(x) \} $,
	\item the term $ H_\Gamma(x,\frac{\partial \cV|_\Gamma}{\partial e_0} (x)) $ accounts for situations in which the trajectory remains on $ \Gamma $.

\end{itemize}

This feature is quite different from the one induced by the effect of entry costs in a network (i.e. when $\Gamma$ is reduced to a point) considered in \cite{Dao2019}, where the value function at the junction point is a constant which is the minimum of the cost when the trajectory stays at the junction point forever and the cost when the trajectory  enters immediately the edge that has the lowest possible cost.

The paper is organized as follows. In Section~\ref{sec2}, we formulate the optimal control problem on a multidimensional junction on $\R^3$ with entry cost. In Section~\ref{A3-cond}, we study the control problem under  the strong controllability condition, where 
we derive the system of HJ equations associated with the optimal control problem, propose a comparison principle, which leads to the well-posedness of \eqref{011}-\eqref{014}, and prove that the value function of the optimal control problem is the unique discontinuous solution of the HJ system. We suggest two different proofs of the comparison principle. The first one is inspired from the work by Lions \& Souganidis \cite{LS2016} and uses arguments from the theory of PDEs, and the second one  uses a blend of arguments from optimal control theory and PDE techniques suggested in \cite{BBC2013,BBC2014,AOT2015} and \cite{Oudet2016_these}. Finally, in Section~\ref{sec8}, the same program is carried out when the strong controllability is replaced by the weaker one that we coin  'moderate controllability near the interface'. The proof  of the comparison principle under the moderate controllability condition is carried on by only using  the PDE techniques provided in Lions \& Souganidis \cite{LS2016}.


The results obtained in the present work extend easily to multidimensional junction on $\R^d$, i.e. a union of $N\ge 2$ half-hyperplanes $\cP_i$ which share an affine space $\Gamma$ of dimension $ d-2 $, and to the more general class of ramified sets, i.e. closed and connected subsets of $\R^d$ obtained as the union of embedded manifolds with dimension strictly less than $d$, for which the interfaces are non-intersecting manifolds of dimension $d-2$, see Figure~\ref{fig:a} for example. We do not know whether these results apply to the ramified sets for which interfaces of dimension $ d-2 $ cross each other (see Figure~\ref{fig:b}). Recent results on optimal control and HJ equations on ramified sets  include  Bressan \& Hong \cite{BH2007}, Camilli, Schieborn \& Marchi \cite{CSM2013}, Nakayasu~\cite{Nakayasu2014} and Hermosilla \& Zidani~\cite{HZ2015} and the book of Barles \& Chasseigne \cite{BB2018}.

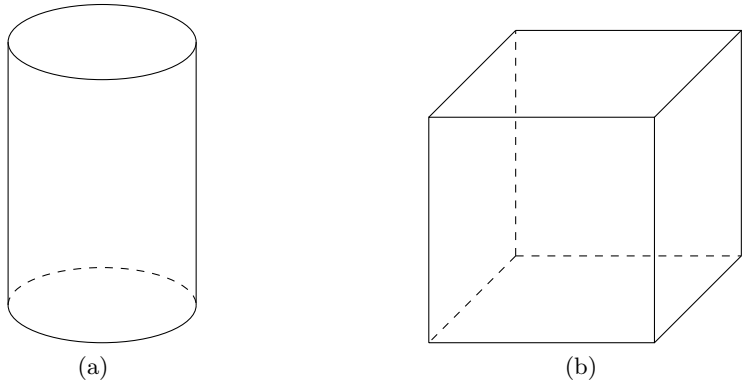
\begin{figure}
	\centering
\subfloat[	\label{fig:a}]{
	\begin{tikzpicture}
	\draw (0,0) ellipse (1.25 and 0.5);
	\draw (-1.25,0) -- (-1.25,-3.5);
	\draw (-1.25,-3.5) arc (180:360:1.25 and 0.5);
	\draw [dashed] (-1.25,-3.5) arc (180:360:1.25 and -0.5);
	\draw (1.25,-3.5) -- (1.25,0);  
	(-1.25,0) -- (-1.25,-3.5) arc (180:360:1.25 and 0.5) -- (1.25,0) arc (0:180:1.25 and -0.5);
	\end{tikzpicture}}\qquad \qquad \qquad \qquad
\subfloat[	\label{fig:b}]{\begin{tikzpicture}
	
	\def\sliceZ{0.8}
	\def\side{3}
	

	\draw (\side,0,0) -- (\side,\side,0)  -- (0,\side,0);
	\draw (0,0,\side) -- (\side,0,\side) -- (\side,\side,\side) -- (0,\side,\side) -- (0,0,\side);
	\draw (\side,0,0) -- (\side,0,\side);        
	\draw [dashed] (0,0,0)-- (\side,0,0);
	\draw [dashed] (0,0,0)-- (0,\side,0);
	\draw [dashed] (0,0,0)-- (0,0,\side);
	\draw (\side,\side,0) -- (\side,\side,\side);
	\draw (0,\side,0) -- (0,\side,\side);

	\end{tikzpicture}}
\caption{Example of remafied sets}
\end{figure}


\section{Formulation of the control problem on a junction}\label{sec2}
\subsection{The geometry of the state of the system}
Let $\left\{ e_{i}\right\} _{0\le i\le N}$ be distinct unit vectors
in $\mathbb{R}^{3}$ such that  $\ e_i \cdot e_0 =0$
for all $i\in\left\{ 1,\ldots,N\right\} $.
The state of the system is given by the junction $\mathcal{S}$ which is the union of $N$ closed half-planes $\mathcal{P}_{i}=\mathbb{R}e_{0}\times\mathbb{R}^{+}e_{i}$. The half-planes $\mathcal{P}_{i}$
are glued at the straight line $\Gamma:=\mathbb{R}e_{0}$ (see Figure~\ref{fig:junction}).
\begin{figure}
	\begin{center}
		\begin{tikzpicture}[scale=0.6, trans/.style={thick,<->,shorten >=2pt,shorten <=2pt,>=stealth} ]
		\draw (0,0) -- (6,3) node[right] {$\Gamma$};
		\draw[->,>=stealth,thick,red,] (0,0) -- (1,0.5)   node[above] {$e_0$};
		\draw (0,0) -- (6,0) node[above] {$\mathcal{P}_1$};
		\draw (6,0) -- (10,2);
		\draw[->,>=stealth,thick,vio,] (0,0) -- (1.5,0)   node[above right] {$e_1$};
		\draw (0,0) -- (0,5) node[below right] {$\mathcal{P}_2$};
		\draw (0,5) -- (4,7);
		\draw[->,>=stealth,thick,vio,] (0,0) -- (0,1.2)   node[above right] {$e_2$};
		\draw (0,0) -- (-3,3) node[below=1mm, right=1.5mm] {$\mathcal{P}_3$};
		\draw (-3,3) -- (0,4.5);
		\draw[->,>=stealth,thick,vio,] (0,0) -- (-1,1)   node[above] {$e_3$};
		\draw (0,0) -- (-1.5,-3) node[above=6mm, right=2.5mm] {$\mathcal{P}_4$};
		\draw (-1.5,-3) -- (9/4,-9/8);
		\draw[->,>=stealth,thick,vio,] (0,0) -- (-0.5,-1)   node[left] {$e_4$};
		\draw (0,0) -- (4,-2) node[above=1mm] {$\mathcal{P}_N$};
		\draw (4,-2) -- (8,0);
		\draw[->,>=stealth,thick,vio,] (0,0) -- (1,-0.5)   node[right] {$e_N$};
		\draw[->] (-1,-3) to [bend right=45] (3.5,-2);
		\end{tikzpicture}
		\caption[Optional caption for list of figures]{The junction $\cS$ in $\R^3$.}
		\label{fig:junction}
	\end{center}
\end{figure}

If $x\in \cS \backslash \Gamma$, there exist unique $i\in \left\{1,\ldots,N\right\}$, $x_i>0$ and $x_0\in \R$ such that
\[
x=x^0 e_0 + x^i e_i =: (x^i,x^0).
\]
Let $x=(x^i,x^0)\in \cP_i$ and $y=(y^j,y^0)\in \cP_j$, the geodesic distance $d(x,y)$ between two points $x,y\in \cS$ is
\[
d(x,y)=\begin{cases}
\left|x-y\right|=\left(\left|x^0-y^0\right|^2+\left|x^i-y^i\right|^2\right)^{\frac{1}{2}}, & \text{if }x,y\in\mathcal{P}_{i},\\
\ds \inf_{z\in\Gamma}\left\{ \left|x-z\right|+\left|z-y\right|\right\}= \left(\left|x^0-y^0\right|^2+\left|x^i+y^j\right|^2\right)^{\frac{1}{2}}, & \text{if }x\in\mathcal{P}_{i} \text{ and } y\in\mathcal{P}_{j}.
\end{cases}
\]


\subsection{The optimal control problem} \label{subsec2}
We consider an infinite horizon optimal control problem which has different dynamics and running costs for each half-plane. For $i=1,\ldots,N$,
\begin{itemize}
\item the set of controls (action set) on $\cP_i$ is denoted by $A_i$,
\item on $\cP_i$ the dynamics of the system is deterministic with associated dynamic $f_i$,
\item the agent has to pay the running cost $\ell_i$ while (s)he is on $\cP_i$.
\end{itemize}

The following conditions, referred to as $[A]$ hereafter,  are our standing assumptions throughout the paper.
\begin{description}
\item{$[A0]$ \textbf{Control sets.}} 
Let $A$ be a metric space (for example $A=\mathbb{R}^d$). For $i=1,\ldots, N$, $ A_i $ is  a nonempty compact subset of $A$ and the sets $A_{i}$ are disjoint.
\item{$[A1]$ \textbf{Dynamics and running costs.}}
For $i=1,\ldots,N$, the functions $\ell_i:\cP_i\times A_{i}\rightarrow\mathbb{R}$ and $ f_{i}: 
\cP_i\times A_{i} \rightarrow \R^3 $
are continuous and bounded by $M$. Moreover, there exists $L>0$ such that 
\[
\left|f_{i}\left(x,a\right)-f_{i}\left(y,a\right)\right|,~ \left|\ell_{i}\left(x,a\right)-\ell_{i}\left(y,a\right)\right|\le L\left|x-y\right|,\quad\mbox{ for all } x,y\in \cP_i, a\in A_{i}.
\]
Hereafter, we will use the notation 
$$
F_{i}\left(x\right):=\left\{ f_{i}\left(x,a\right):a\in A_{i}\right\}.
$$
\item[ ] $\qquad$ {\bf Entry costs.} $\left\{c_1,\ldots,c_N\right\}$ is a set of entry cost functions, where $c_i: \Gamma \rightarrow \R^+$ is Lipschitz continuous and bounded from below by some positive constant $C$.

\item{$[A2]$ \textbf{Convexity of dynamics and costs}.} For $x\in \cP_i$, the following
set
\[
\textsc{FL}_{i}\left(x\right):=\left\{ \left(f_{i}\left(x,a\right),\ell_{i}\left(x,a\right)\right):a\in A_{i}\right\} 
\]
 is non empty, closed and convex.

\end{description}
\begin{rem}
In $[A0]$, the assumption that the set $A_i$ are disjoint is not restrictive since we can always replace $A_i$ by $\tilde{A}_i=A_i\times \left\{i\right\}$. Assumption $[A2]$ is made to avoid the use of relaxed control (see the definition for relaxed control in \cite{BD1997}). Many of these conditions can be weakened at the cost of keeping the presentation of the results easy to follow.
\end{rem}

\subsubsection{Controlled dynamics} 
Let $\cM$ be the closed set given by 
\[ 
\cM := \left\{ (x,a): x\in \cP_i,~a\in A_i \text{ if } x\in \cP_i \backslash \Gamma, \text{ and } a\in \cup_{i=1}^N A_i \text{ if } x\in \Gamma\right\} 
\]
and define the function $f$ on $\cM$ by
\[
f\left(x,a\right)=\begin{cases}
 f_{i}\left(x,a\right), & \quad\mbox{if }x\in \cP_i\backslash \Gamma \mbox{ and }a\in A_{i},\\
f_{i}\left(x,a\right), & \quad\mbox{if }x\in \Gamma \mbox{ and }a\in A_{i}.
\end{cases}
\]
The function $f$ is continuous on $\cM$ since the sets $A_{i}$
are disjoint.
Consider the set $\tilde{F}\left(x\right)$ which contains all the 'possible
speeds' at $x$  defined by
\[
\tilde{F}\left(x\right)=\begin{cases}
F_{i}\left(x\right) & \quad\mbox{if }x\in \cP_i\backslash\Gamma,\\
\bigcup_{i=1}^{N}F_{i}\left(x\right) & \quad\mbox{if \ensuremath{x\in \Gamma}}.
\end{cases}
\]
For $x\in\cS$, the set of admissible trajectories starting
from $x$ is
\[
Y_{x}=\left\{ y_{x}\in Lip\left(\mathbb{R}^{+};\cS\right) \left|
    \begin{array}[c]{l}
\hbox{$\dot{y}_{x}\left(t\right)  \in\tilde{F}\left(y_{x}\left(t\right)\right) \quad\mbox{for a.e. }t>0$}\\
  \hbox{$y_{x}\left(0\right)  =x$.}     
    \end{array} \right.
\right\}.
\]

Thanks to the Filippov implicit function lemma (see \cite{MW1967}), it is shown in \cite[Theorems 3.2.2 and 3.3.1]{Oudet2016_these} that under the respective assumptions $[A3]$ and $[\tilde{A}3]$ below, the set $Y_x$ is not empty. We introduce the set of admissible controlled trajectories starting from $x$
\[
\cT_x=\left\{ \left(y_{x},\alpha\right)\in L_{loc}^{\infty}\left(\mathbb{R}^{+};\mathcal{M}\right):y_{x}\in Lip\left(\mathbb{R}^{+};\cS\right)\mbox{ and }y_{x}\left(t\right)=
x+\int_{0}^{t}f\left(y_{x}\left(s\right),\alpha\left(s\right)\right)ds\right\},
\]
where $ \left(y_{x},\alpha\right)\in L_{loc}^{\infty}\left(\mathbb{R}^{+};\mathcal{M}\right) $ means $ t\mapsto \left(y_{x} (t),\alpha(t)\right)\in L_{loc}^{\infty}\left(\mathbb{R}^{+};\mathcal{M}\right)  $.
We note that if $(y_x,\alpha)\in \cT_x$ then $y_x\in Y_x$. 
Thus, from now on, we will denote $y_x$ by $y_{x,\alpha}$ if $(y_x,\alpha)\in \cT_x$.
By continuity of the trajectory $y_{x,\alpha}$, the set $T^{\Gamma}_{x,\alpha}:=\left\{t\in\R^+: y_{x,\alpha}(t)\in \Gamma\right\}$ containing all the times at which the trajectory stays on $\Gamma$ is closed and therefore, the set $T^{i}_{x,\alpha}:=\left\{t\in\R^+: y_{x,\alpha}(t)\in \cP_i \backslash \Gamma\right\}$ is open.
Consequently, $T^{i}_{x,\alpha}$ is a countable union of disjoint open intervals
\[
T^i_{x,\alpha} =
\begin{cases}
\left[0,\eta_{i0}\right)\cup\bigcup_{k\in K_{i}\subset\mathbb{N^{\star}}}\left(t_{ik},\eta_{ik}\right), & \quad\mbox{if }x\in \cP_i \backslash \Gamma ,\\
\bigcup_{k\in K_{i}\subset\mathbb{N^{\star}}}\left(t_{ik},\eta_{ik}\right), & \quad\mbox{if }x\notin \cP_i \backslash \Gamma ,
\end{cases}
\]
where $K_{i}=\left\{1,\ldots,n\right\}$ if the trajectory $y_{x,\alpha}$ enters $\cP_i$ $n$ times, $K_{i}=\mathbb{N}$ if the trajectory $y_{x,\alpha}$ enters $\cP_i$ infinite times and $ K_i = \emptyset $ if the trajectory never enters $ \cP_i $.

\begin{rem}
From the previous definition, we see that $t_{ik}$ is an entry time in $\cP_i \backslash \Gamma$ and $\eta_{ik}$ is an exit time from $\cP_i \backslash \Gamma$. Hence
\[
x_{ik}:=y_{x,\alpha	}(t_{ik})\in \Gamma,~z_{ik}:=y_{x,\alpha}(\eta_{ik})\in \Gamma.
\]
\end{rem}

 We now define a cost functional and a value function corresponding to the optimal problem.

\subsubsection{Cost functional and value function}\label{v-function-interface}

\begin{defn}
The cost functional associated to the trajectory $(y_x,\alpha)\in \cT_x$ is defined by 
\[
J(x,\alpha)=\int_0^\infty \ell(y_{x} (t),\alpha (t)) e^{-\lambda t} dt + \sum^N_{i=1} \sum_{k\in K_i} c_i (x_{ik}) e^{-\lambda t_{ik}}, 
\]
where $ \lambda>0 $ and the running cost $\ell: \cM \rightarrow \R$ is
\begin{equation}\label{eq:cost-f}
\ell(x,a)=
\begin{cases}
\ell_{i}\left(x,a\right) & \quad\mbox{if \ensuremath{x\in \cP_i\backslash\Gamma}}\mbox{ and }a\in A_{i},\\
\ell_{i}\left(x,a\right) & \quad\mbox{if }x\in \Gamma\mbox{ and }a\in A_{i}.
\end{cases}
\end{equation}

The value function of the infinite horizon optimal control problem is defined by
\begin{equation}\label{eq:value-f}
\cV (x) = \inf_{(y_{x},\alpha) \in \cT_x} J(x,\alpha).
\end{equation}
\end{defn}

\begin{rem}
By the definition of the value function, we are mainly interested in admissible control laws $\alpha$ for which $J(x,\alpha)<+\infty$. In such a case, even if the set $K_i$ may be infinite, it is possible to reorder $\left\{t_{ik},\eta_{ik}:k\in \N \right\}$ such that
\[
t_{i1}<\eta_{i1}<t_{i2}<\eta_{i2}<\ldots<t_{ik}<\eta_{ik}<\ldots,
\]
and
\[
\lim_{k\rightarrow \infty} t_{ik} = \lim_{k\rightarrow \infty} \eta_{ik}=+\infty.
\]
Indeed, because of the positivity of the entry cost functions, if there exists a cluster point,  $J(x,\alpha)$ has to be infinite which leads to a contradiction, since we assumed that $J(x,\alpha)<+\infty$.
This means that the state cannot switch half-planes infinitely many times in finite time, otherwise the cost functional becomes obviously infinite.

\end{rem}

The following example shows that the value function with entry costs can possibly be discontinuous at the interface $\Gamma$.

\begin{ex}
Consider a simple junction $ \cS $ with two half-planes $ \cP_1$ and $ \cP_2 $.
To simplify, we may identify $ \cS \equiv \R^2 $ and 
$\cP_1= \R^+ e_1 \times \R e_0 \equiv (-\infty, 0] \times \R $, $\cP_2 = \R^+ e_2 \times \R e_0 \equiv [0, +\infty) \times \R$ and $\Gamma= \R e_0 \equiv \{0\} \times \R$. The control sets are $A_i=\left\{(a_i,a_0)\in \R^2 : a_0^2+a_i^2\le 1\right\}$ with $i\in\left\{1,2\right\}$. Set
\[
(f(x,a),\ell(x,a))=
\begin{cases}
((a_1,a_0) , 1)& \text{if } x\in \cP_1 \text{ and } a=(a_1,a_0) \in A_1, \\
((a_2,a_0),1-a_2) & \text{if } x\in \cP_2 \text{ and } a=(a_2,a_0) \in A_2,
\end{cases}
\]
and entry costs functions $c_1\equiv C_1$, where $C_1$ is a positive constant and
\[ 
c_2 (\zeta) = 
\begin{cases}
3 - |\zeta|, & \text{ if } \zeta \le 1, \\
 2, & \text{ if } \zeta \ge 1.
\end{cases}
\]
For $x\in \cP_2\backslash \Gamma$, then $\cV(x)=v_2(x)=0$ with optimal strategy which consists of choosing $\alpha\equiv(a_2 = 1, a_0 = 0)$. For $x\in \cP_1$, we can check that 
\begin{itemize}
	\item If $ 2\ge 1/\lambda $, then $ \cV(x) = 1/\lambda $ with optimal control law  $ \alpha \equiv (a_1 = 0, a_0 = 1) $.
	\item If $ 2<1/\lambda $, we consider $ x= (x_1,x_0) \in \cP_i $ in two cases:
	\begin{enumerate}
		\item [Case 1:] If $ |x_1| \ge 1 $, then the optimal control law $ \alpha (t) = (a_1= -1, a_0 =0)$ if $ t\le |x_1| $ and $ \alpha (t) = (a_2 = 1, a_0 = 0) $ if $ t\ge |x_1| $ and
		\[
		\cV(x) = \int^{|x_1|}_0 1 dt +  c_2 ((0,x_1)) e^{-\lambda |x_1|}+0 = \dfrac{1-e^{-\lambda |x_1|} }{\lambda} +2e^{-\lambda |x_1|}.
		\]
		The optimal trajectory starting from $  x = (x_1,x_0) \in \cP_1 $ in case $ |x_1|\ge 1 $ is plotted in red in Figure~\ref{ex}.
		\item [Case 2:] If  $ |x_1| \le 1 $, let $ \tau(x) = \left(|x_1|^2 +|x_0 - 1|^2\right)^{1/2} $
		and $ s: [-1,1] \rightarrow \R $ where $ s(\zeta) =1 $ if $ \zeta \ge 0 $ and $ s(\zeta) =-1 $ if $ \zeta < 0 $. We have the optimal control law
		\[  \alpha(t) = 
		\begin{cases}
		\left(a_1 = -\dfrac{x_1}{\tau (x)}, a_0 = \dfrac{1 - x_0}{\tau (x)}\right), & t\le \tau(x),\\
		(a_2 = 1, a_0 = 0), & t\ge \tau(x).
		\end{cases}
		\] and the value function
		\[		\cV(x)= \int^{\tau(x)}_0 1 dt +  c_2 ( (0,s(x_1)) ) e^{-\lambda}+0= \frac{1-e^{-\lambda \tau(x)}}{\lambda}+ 2 e^{-\lambda \tau(x)}. 
		\]
		The optimal trajectory starting from $  x = (x_1,x_0) \in \cP_1 $ in case $ |x_1|< 1 $ is plotted in blue in Figure~\ref{ex}.
	\end{enumerate}
\begin{figure}
	\begin{center}
		\begin{tikzpicture}[scale=1.3, trans/.style={thick,<->,shorten >=2pt,shorten <=2pt,>=stealth} ]
		\draw (0,-2) -- (0,2) node[right] {$\Gamma$};
		\draw [fill] (0,1)   circle [radius=.04]  node[below right] {$1$}; 
		\draw (2,-2) node[above] {$\mathcal{P}_2$};
		\draw (-2,-2) node[above] {$\mathcal{P}_1$};
		\draw [fill] (0,-1)   circle [radius=.04]  node[below right] {$-1$};
		\draw [fill] (0,0)   circle [radius=.04]  node[below right] {$0$};
		\draw [fill] (-2,1.5)   circle [radius=.04]  node[below left] {$A$};
		\draw [fill] (-1.5,-0.5)   circle [radius=.04]  node[below left] {$B$};
		\draw [dashed, red] (-2,1.5) -- (0,1.5) ;
		\draw [->,>=stealth, dashed, red] (-2,1.5) -- (-1,1.5) ;
		\draw [dashed, red] (0,1.5) -- (2,1.5) ;
		\draw [->,>=stealth, dashed, red] (0,1.5) -- (1,1.5) ;
		\draw [dashed, blue] (-1.5,-0.5) -- (0,-1) ;
		\draw [->,>=stealth, dashed, blue] (-1.5,-0.5) -- (-0.75,-0.75) ;
		\draw [dashed, blue] (0,-1) -- (2,-1) ;
		\draw [->,>=stealth, dashed, blue] (0,-1) -- (1,-1) ;
		\end{tikzpicture}
		\caption[Optional caption for list of figures]{The trajectories in case  $2<1/ \lambda$.}
		\label{ex}
	\end{center}
\end{figure}
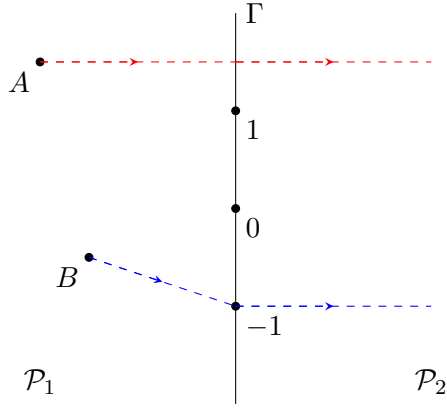	
\end{itemize}

To sum up, there are two cases
\begin{enumerate}
\item If $\ds\inf_{x\in \Gamma}c_2(x) = 2 \ge 1/\lambda$, then
\[
\cV(x)=
\begin{cases}
0 & \text{if } x\in \cP_2 \backslash \Gamma,\\
\dfrac{1}{\lambda} & \text{if } x\in \cP_1.
\end{cases}
\]
The graph of the value function with entry costs satisfying $ \inf_{x\in \Gamma} c_2(x) \ge 1/\lambda$ is plotted in Figure~\ref{figA}.
\item If  $\ds\inf_{x\in \Gamma}c_2(x) = 2 <1/\lambda$, then
\[
\cV(x)=
\begin{cases}
0 & \text{if } x\in \cP_2 \backslash \Gamma,\\
\dfrac{1-e^{-\lambda |x_1| }}{\lambda}+ 2 e^{-\lambda|x_1| } & \text{if } x\in \cP_1\text{ and } |x_1| \ge 1,\\
\dfrac{1-e^{-\lambda \left(|x_1|^2 +|x_0 - 1|^2\right)^{1/2} }}{\lambda}+ 2 e^{-\lambda \left(|x_1|^2 +|x_0 - 1|^2\right)^{1/2} } & \text{if } x\in \cP_1 \text{ and } |x_1| \le 1.
\end{cases}
\]
The graph of the value function in the case $\inf c_2 <1/\lambda$ is plotted in Figure~\ref{figB}.
\end{enumerate}

\begin{figure}
	\centering
	\subfloat[$c_2\ge 1/\lambda$	\label{figA}]{
		\begin{tikzpicture}[scale=0.9,]
		\begin{axis}[colormap/cool,]
		\addplot3[
		surf,
		opacity=0.8,
		samples=50, samples y=30,
		domain=-4:0,domain y=-2:2
		]
		{4};
		\addplot3[	
		surf,
		opacity=0.8,
		samples=50, samples y=30,
		domain=0:2,domain y=-2:2
		]
		{0};
		\end{axis}
		\end{tikzpicture}
	}
	\qquad
	\subfloat[$ c_2 (\zeta) = 3 - |\zeta|$ if $ \zeta \le 1 $ and $ c_2 (\zeta) = 2 $ if $ \zeta \ge 1 $	\label{figB}]{
		\begin{tikzpicture}[scale=0.9,]
		\begin{axis}[colormap/cool,]
		\addplot3[
		surf,
		opacity=0.8,
		samples=50, samples y=30,
		domain=-4:0,domain y=-1:0
		]
		{4-2*exp (-1/4*sqrt( \x^2 +(1+ \y )^2 ) ) };
		\addplot3[
		surf,
		opacity=0.8,
		samples=50, samples y=30,
		domain=-4:0,domain y=0:1
		]
		{4-2*exp (-1/4*sqrt( \x^2 +(1- \y )^2 ) ) };
		\addplot3[
		surf,
		opacity=0.8,
		samples=50, samples y=30,
		domain=0:2,domain y=-2:2
		]
		{0};
		\addplot3[
		surf,
		opacity=0.8,
		samples=50, samples y=30,
		domain=-4:0,domain y=-2:-1
		]
		{4-2*exp(1/4*\x) };
		\addplot3[
		surf,
		opacity=0.8,
		samples=50, samples y=30,
		domain=-4:0,domain y=1:2
		]
		{4-2*exp (1/4*\x ) };
		\end{axis}
		\end{tikzpicture}}
	\caption{The value function $\cV$ in two cases}\label{fig2}
\end{figure}
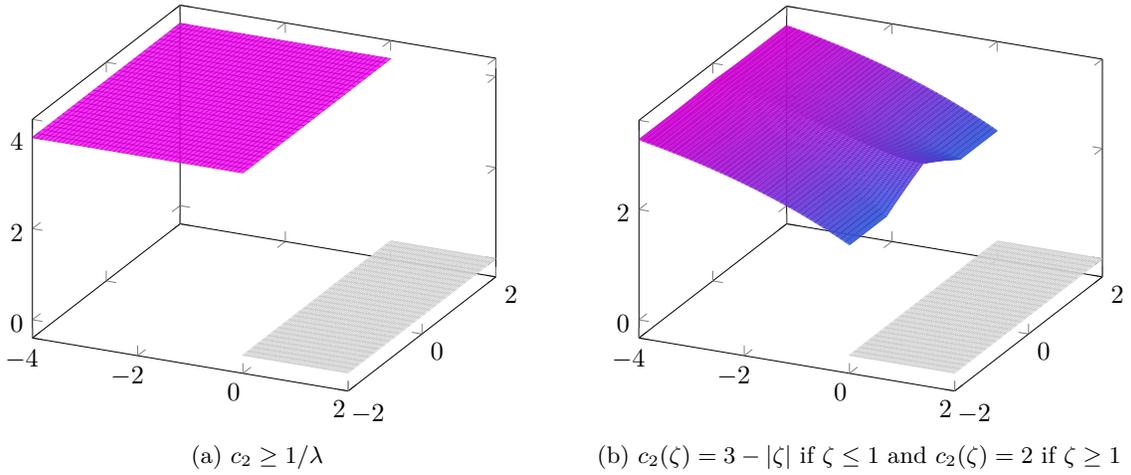

\end{ex}


\section{Hamilton-Jacobi system under strong controllability condition near the interface}\label{A3-cond}
In this section we derive the Hamilton-Jacobi system (HJ) associated with the above optimal control problem and prove that the value function given by \eqref{eq:value-f} is the unique viscosity solution of that  (HJ) system, under the following condition:

\begin{description}
\item{$\left[A3\right]$ (\textbf{Strong controllability})} There exists a real
number $\delta>0$ such that for any $i=1,\ldots,N$ and for all $x\in \Gamma$,
\[
B(0,\delta)\cap (\R e_0 \times \R e_i)\subset F_{i}\left(x\right).
\]
\end{description}

\begin{rem}
If $x$ is close to $\Gamma$, we can use $[A3]$ to obtain the coercivity of the Hamiltonian which will be needed in Lemma \ref{lem:continuity-sub} below to prove the Lipschitz continuity of the viscosity subsolution of the HJ system.
\end{rem}
Hereafter we will denote by $B(\Gamma,\rho),\,\rho>0,$ the set
$$
B(\Gamma,\rho):=\left\{x: \inf_{z\in\Gamma} |x-z|< \rho \right\}.
$$

\begin{lem}\label{lem:strong control}
Under Assumptions $[A1]$ and $[A3]$, there exist two positive numbers $r_0$ and $C$ such that for all $x_1,x_2\in B(\Gamma,r_0)$, there exist $(y_{x_1}, \alpha_{x_1,x_2})\in \cT_{x_1}$ and $\tau_{x_1,x_2}<C d(x_1,x_2)$ such that $y_{x_1}(\tau	_{x_1,x_2})=x_2$.
\end{lem}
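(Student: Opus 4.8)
The plan is to reduce the statement to an explicit geometric construction of a curve joining $x_1$ to $x_2$, whose velocity is then realized by a control via Filippov's implicit function lemma. The heart of the argument is the following local controllability property of the velocity sets: there exists $r_0\in(0,\delta/(2L)]$ such that, for every $i$ and every $x\in\cP_i\cap B(\Gamma,r_0)$,
\[
B\!\left(0,\tfrac{\delta}{2}\right)\cap(\R e_0\times\R e_i)\subseteq F_i(x).
\]
To prove this I would fix such an $x$, let $\bar{x}\in\Gamma$ be its projection onto $\Gamma$ (so that $|x-\bar{x}|=\operatorname{dist}(x,\Gamma)=:\rho<r_0$), and compare $F_i(x)$ with $F_i(\bar{x})$. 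For any unit vector $n\in\R e_0\times\R e_i$, assumption $[A3]$ gives $\delta n\in F_i(\bar{x})$, i.e. $\delta n=f_i(\bar{x},a)$ for some $a\in A_i$; the Lipschitz bound in $[A1]$ then yields $f_i(x,a)\in F_i(x)$ with $|f_i(x,a)-\delta n|\le L\rho$, so that the support function of $F_i(x)$ satisfies $h_{F_i(x)}(n)\ge\delta-L\rho\ge\delta/2$ in every such direction. Working in the plane $\R e_0\times\R e_i$ containing $\cP_i$, to which the admissible velocities are tangent, $F_i(x)$ is convex by $[A2]$, and the representation of a closed convex set as the intersection of its supporting half-spaces converts this lower bound into the asserted inclusion.

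Granting this, set $\sigma:=\delta/2$ and distinguish two cases. If $x_1,x_2$ lie in the same half-plane $\cP_i$, I take $y(t):=x_1+t\,\sigma u$ with $u:=(x_2-x_1)/|x_2-x_1|$: the segment stays in the convex strip $\cP_i\cap B(\Gamma,r_0)$, its velocity $\sigma u$ lies in $B(0,\sigma)\cap(\R e_0\times\R e_i)\subseteq F_i(y(t))$, and $y$ reaches $x_2$ at time $\tau=|x_1-x_2|/\sigma=d(x_1,x_2)/\sigma$. If instead $x_1\in\cP_i$ and $x_2\in\cP_j$ with $i\ne j$, I route through the point $z^\ast\in\Gamma$ attaining the infimum in the definition of $d(x_1,x_2)$, concatenating the straight segment from $x_1$ to $z^\ast$ in $\cP_i$ with the straight segment from $z^\ast$ to $x_2$ in $\cP_j$, each travelled at speed $\sigma$. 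Both segments remain in $B(\Gamma,r_0)$ because the coordinate transverse to $\Gamma$ varies monotonically between $0$ and $x_1^i<r_0$ (resp. between $0$ and $x_2^j<r_0$); at the switching instant, where $y=z^\ast\in\Gamma$, the outgoing velocity lies in $F_j(z^\ast)\subseteq\tilde{F}(z^\ast)$, so $\dot{y}(t)\in\tilde{F}(y(t))$ for a.e. $t$, and the total time is $\tau=(|x_1-z^\ast|+|z^\ast-x_2|)/\sigma=d(x_1,x_2)/\sigma$ by the very definition of $d$.

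In both cases $y$ is Lipschitz with values in $\cS$ and satisfies $\dot{y}(t)\in\tilde{F}(y(t))$ for a.e. $t$; the Filippov implicit function lemma (as in \cite{MW1967}) then furnishes a measurable control $\alpha_{x_1,x_2}$ with $f(y(t),\alpha_{x_1,x_2}(t))=\dot{y}(t)$, whence $(y,\alpha_{x_1,x_2})\in\cT_{x_1}$ and $y(\tau)=x_2$ with $\tau=d(x_1,x_2)/\sigma$. Choosing $C:=2/\sigma=4/\delta$ gives $\tau<C\,d(x_1,x_2)$ whenever $x_1\ne x_2$, which is the assertion.

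The main obstacle is the local controllability property stated first: $[A3]$ only guarantees the inclusion of the velocity ball \emph{on} $\Gamma$, and upgrading it to a full one-sided neighborhood $B(\Gamma,r_0)$ requires combining the Lipschitz estimate with the convexity of $F_i(x)$ and the tangency of the admissible velocities, in order to turn the support-function lower bound into a genuine set inclusion. Once this is secured, the remaining ingredients—the elementary geometry of the geodesic through $\Gamma$ and the application of Filippov's lemma—are routine.
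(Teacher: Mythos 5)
Your proof is correct and is essentially the classical argument that the paper itself invokes only by citation (the paper skips the proof, referring to \cite{Dao2019}): upgrade $[A3]$ to a uniform inclusion $B(0,\delta/2)\cap(\R e_0\times\R e_i)\subseteq F_i(x)$ on a neighborhood of $\Gamma$ via the Lipschitz bound in $[A1]$ together with convexity and closedness of $F_i(x)$, follow the geodesic through $\Gamma$ at speed $\delta/2$, and recover a measurable control by Filippov's lemma. The only points worth flagging are the implicit standing assumption that the velocities $f_i(x,a)$ are tangent to $\cP_i$ (your support-function step genuinely needs it, and it is inherited from the framework of \cite{Oudet2016_these}), and that the trajectory should be extended past $\tau_{x_1,x_2}$ to all of $\R^+$ (possible since $Y_{x_2}\neq\emptyset$) so that the pair indeed belongs to $\cT_{x_1}$.
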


\begin{proof}
The proof is classical and similar to the one in \cite{Dao2019}, so we skip it.
\end{proof}

\subsection{Value function on the interface}\label{sec3}

\begin{lem}\label{lem:continuity}
Under Assumptions $[A]$ and $[A3]$, for all $i\in\{1,\ldots,N\}$, $\cV|_{\cP_i \backslash \Gamma}$ is continuous. Moreover, there exists $\varepsilon>0$  such that $\cV|_{\cP_i \backslash \Gamma}$ is Lipschitz continuous in $B(\Gamma,\varepsilon)\cap \cP_i \backslash \Gamma$. Therefore, it is possible to extend $\cV|_{\cP_i \backslash \Gamma}$ to the interface $\Gamma$ and from now on, we use the following notation
\begin{equation}\label{eq:v-i}
v_i(x)=
\begin{cases}
\cV|_{\cP_i}(x), &\text{if } x\in \cP_i \backslash \Gamma,\\
\ds \lim_{ ( \cP_i \backslash \Gamma) \ni z \rightarrow x} \cV|_{\cP_i}(z), &\text{if } x\in  \Gamma.
\end{cases}
\end{equation}
\end{lem}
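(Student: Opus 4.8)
The plan is to estimate, for nearby $x_{1},x_{2}\in\cP_{i}\setminus\Gamma$, the gap $\cV(x_{1})-\cV(x_{2})$ by building a competitor control at $x_{1}$ out of a near-optimal control at $x_{2}$. Two elementary facts will be used throughout. First, since $|\ell|\le M$ and all entry costs are nonnegative, every cost functional satisfies $J\ge-M/\lambda$, hence $\cV\ge-M/\lambda$; this lower bound (rather than a global sup bound, which may fail since the $c_{i}$ are only bounded below) is what keeps the constants explicit. Second, a competitor trajectory built so as to remain inside a single half-plane $\cP_{i}\setminus\Gamma$ creates no entry cost. I treat the collar $B(\Gamma,\varepsilon)\cap\cP_{i}$ (Lipschitz) and the interior separately, then pass to the limit on $\Gamma$.

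\emph{Lipschitz estimate near $\Gamma$} (main step). Let $r_{0},C$ be as in Lemma~\ref{lem:strong control} and $x_{1},x_{2}\in B(\Gamma,r_{0})\cap(\cP_{i}\setminus\Gamma)$. Since $\cP_{i}$ is convex and both endpoints have positive $e_{i}$-coordinate, the segment $[x_{1},x_{2}]$ stays in $\cP_{i}\setminus\Gamma$; consequently, running that segment as in Lemma~\ref{lem:strong control} (admissible by $[A3]$, $[A2]$ and the Lipschitz bound $[A1]$ on $f_{i}$), the connecting trajectory may be taken inside $\cP_{i}\setminus\Gamma$, reaching $x_{2}$ in time $\tau<Cd(x_{1},x_{2})=C|x_{1}-x_{2}|$ and paying no entry cost. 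Given $\epsilon'>0$, pick $\alpha_{2}$ with $J(x_{2},\alpha_{2})\le\cV(x_{2})+\epsilon'$ and concatenate it after the steering; this yields $\alpha_{1}\in\cT_{x_{1}}$ with
\[
\cV(x_{1})\le J(x_{1},\alpha_{1})=\int_{0}^{\tau}\ell\,e^{-\lambda t}\,dt+e^{-\lambda\tau}J(x_{2},\alpha_{2}).
\]
Using $|\ell|\le M$, $0\le1-e^{-\lambda\tau}\le\lambda\tau$ and $\cV\ge-M/\lambda$ to bound $(e^{-\lambda\tau}-1)\cV(x_{2})\le M\tau$, I get $\cV(x_{1})-\cV(x_{2})\le2M\tau+\epsilon'<2MC|x_{1}-x_{2}|+\epsilon'$. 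Letting $\epsilon'\downarrow0$ and swapping the roles of $x_{1},x_{2}$ gives $|\cV(x_{1})-\cV(x_{2})|\le2MC|x_{1}-x_{2}|$, so $\varepsilon=r_{0}$ works.

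\emph{Continuity in the interior.} For arbitrary $x_{1},x_{2}\in\cP_{i}\setminus\Gamma$ I would compare trajectories under a common control: as long as both stay in $\cP_{i}\setminus\Gamma$, the Lipschitz bound $[A1]$ and Gronwall give $|y_{x_{1}}(t)-y_{x_{2}}(t)|\le|x_{1}-x_{2}|e^{Lt}$, so that truncating at $T$ the discounted running-cost gap on $[0,T]$ is $O(|x_{1}-x_{2}|e^{LT})$ while the tail is $\le\frac{2M}{\lambda}e^{-\lambda T}$; sending $|x_{1}-x_{2}|\to0$ and then $T\to\infty$ yields continuity. The genuine difficulty is that a finite-cost near-optimal trajectory for $x_{2}$ may reach $\Gamma$, and there the common control is no longer admissible for $y_{x_{1}}$ because the action set changes with the half-plane. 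Since such a trajectory meets $\Gamma$ only finitely often on $[0,T]$, at each passage I re-synchronize $y_{x_{1}}$ onto $y_{x_{2}}$ via the controllability step above; the entry costs then differ only by $O(|x_{1}-x_{2}|)$ because the $c_{i}$ are Lipschitz and the entry points and times are close.

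\emph{Extension to $\Gamma$ and main obstacle.} Being uniformly Lipschitz on $B(\Gamma,\varepsilon)\cap(\cP_{i}\setminus\Gamma)$, $\cV|_{\cP_{i}\setminus\Gamma}$ is uniformly continuous there, so for $x\in\Gamma$ the values $\cV(z)$ form a Cauchy net as $(\cP_{i}\setminus\Gamma)\ni z\to x$ and the limit in \eqref{eq:v-i} exists, defining $v_{i}$, which is then Lipschitz up to $\Gamma$. I expect the main obstacle to be the entry-cost bookkeeping in the competitor construction: the connecting trajectory must be routed strictly inside $\cP_{i}$ to avoid spurious charges, and in the interior argument the entry costs and entry times of the two trajectories must be matched at every passage near $\Gamma$, where the jump in control-admissibility forces the re-synchronization rather than a naive reuse of the control.
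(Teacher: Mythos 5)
Your proposal is correct and takes essentially the same route as the paper: the paper's own proof is a one-line reduction to Lemma~\ref{lem:strong control} plus the classical arguments of \cite{ACCT2011} and \cite{Dao2019}, which are exactly the steps you write out (local controllability near $\Gamma$ giving the Lipschitz bound, with entry costs avoided by steering strictly inside $\cP_i\setminus\Gamma$; Gronwall with re-synchronization at crossings for interior continuity; uniform continuity for the extension to $\Gamma$). No essential difference in approach.
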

\begin{proof}
This lemma is a consequence of Lemma~\ref{lem:strong control}, see~\cite{ACCT2011} and \cite{Dao2019} for more details.
\end{proof}

For $x\in \Gamma$, we set
\begin{equation}\label{eq:FL}
\textsc{FL}_\Gamma (x) := \bigcup_{i=1}^N \left( \textsc{FL}_i(x) \cap (\R e_0 \times \R) \right)
\end{equation}
and 
\begin{equation}\label{eq:A-Gamma}
A^{\Gamma}_{i}(x)=\left\{a\in A_i : f_i(x,a)\cdot e_i =0 \right\},\quad i=1,\ldots N,
\end{equation}
where $ \textsc{FL}_i(x) $ is defined in Assumption $ [A2] $.
Let us define a viscosity solution of the \textit{switching} Hamilton-Jacobi equation on the interface~$\Gamma$:
\begin{equation}\label{eq:HJG}
\lambda u_{\Gamma}(x) + \max \left\{ -\lambda \min_{i=1,\ldots,N} \{v_i (x) + c_i(x) \} , H_\Gamma \left( x,\dfrac{\partial u_\Gamma}{\partial e_0}  (x) \right) \right\}=0,\quad x\in \Gamma,
\end{equation}
where $H_\Gamma$ is the Hamiltonian on $\Gamma$ defined by
\begin{equation}\label{eq:H-G}
H_\Gamma (x,p)= \max_{i=1,\ldots, N} \max_{a \in A^{\Gamma}_{i}(x)} \left\{  -(f_i(x,a) \cdot e_0) p - \ell_i(x,a) \right\} = \max_{(\zeta,\xi)\in \textsc{FL}_\Gamma (x)} \left\{  - (\zeta \cdot e_0 ) p - \xi \right\}.
\end{equation}
\begin{defn}
An upper (resp. lower) semi-continuous $u_\Gamma : \Gamma \rightarrow \R$ is a viscosity subsolution (resp. supersolution) of \eqref{eq:HJG} if for any $x\in \Gamma$, any $\varphi \in C^1 ( \Gamma )$ such that $u_\Gamma - \varphi$ has a local maximum (resp. minimum) point at $x$, then
\[
\lambda u_{\Gamma}(x) + \max \left\{ -\lambda \min_{i=1,\ldots,N} \{v_i (x) + c_i(x) \} , H_\Gamma \left(x,\dfrac{\partial \varphi}{\partial e_0}  (x)\right)\right\}\le0 \quad (\text{resp.}\ge 0 ).
\]
The continuous function $u_\Gamma : \Gamma \rightarrow \R$ is called viscosity solution of \eqref{eq:HJG} if it is both viscosity sub and supersolution of \eqref{eq:HJG}.
\end{defn}

We have the following characterization of the value function $\cV$ on the interface.

\begin{thm}\label{thm1}

Under Assumptions $[A]$ and $[A3]$, the restriction of value function $\cV$ on the interface $\Gamma$, $\cV|_{\Gamma}$, is a viscosity solution of \eqref{eq:HJG}. 

\end{thm}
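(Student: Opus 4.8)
The plan is to verify the two viscosity inequalities for \eqref{eq:HJG} directly from the dynamic programming principle (DPP), treating separately the regime in which the trajectory stays on $\Gamma$ and the regime in which it enters a half-plane. As a preliminary, I would record that $\cV|_\Gamma$ is continuous on $\Gamma$: by $[A3]$ every velocity $v e_0$ with $|v|\le\delta$ belongs to $F_i(x)$ for $x\in\Gamma$, so two nearby interface points can be joined by an admissible trajectory that never leaves $\Gamma$ (hence pays no entry cost), and the boundedness of $\ell_i$ together with Lemma~\ref{lem:strong control} yields a Lipschitz bound for $\cV|_\Gamma$. This makes $\cV|_\Gamma$ a legitimate candidate in the definition of viscosity solution.

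\textbf{Subsolution.} Fix $x\in\Gamma$ and $\varphi\in C^1(\Gamma)$ such that $\cV|_\Gamma-\varphi$ has a local maximum at $x$. Since a maximum of two quantities is $\le 0$ exactly when both are, I must establish $\cV|_\Gamma(x)\le\min_{i}\{v_i(x)+c_i(x)\}$ and $\lambda\cV|_\Gamma(x)+H_\Gamma(x,\partial\varphi/\partial e_0(x))\le 0$. The first inequality is the entering option: for each $k$, I build an admissible control that leaves $x$ into $\cP_k$, reaches an interior point $z\to x$ in vanishing time (possible by $[A3]$) while paying the entry cost $c_k$ at a point tending to $x$, and thereafter follows a near-optimal trajectory from $z$; letting $z\to x$ and invoking the continuity of $\cV|_{\cP_k}$ up to $\Gamma$ from Lemma~\ref{lem:continuity} gives $\cV|_\Gamma(x)\le v_k(x)+c_k(x)$, and minimizing over $k$ proves the claim. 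The second inequality is the classical subsolution computation adapted to the interface: for $a\in A^{\Gamma}_{i}(x)$ the trajectory driven by the constant control $a$ stays within $O(t^2)$ of $\Gamma$, so the Lipschitz bound near $\Gamma$ from Lemma~\ref{lem:continuity} lets me compare $\cV(y(t))$ with $\cV|_\Gamma$ at the projection of $y(t)$ up to an $o(t)$ error; feeding the DPP inequality $\cV|_\Gamma(x)\le\int_0^t\ell_i(y(s),a)e^{-\lambda s}\,ds+e^{-\lambda t}\cV(y(t))$ into the test function, dividing by $t$ and letting $t\downarrow 0$ gives $\lambda\cV|_\Gamma(x)+[-(f_i(x,a)\cdot e_0)\partial\varphi/\partial e_0(x)-\ell_i(x,a)]\le 0$; maximizing over $i$ and $a\in A^{\Gamma}_{i}(x)$ yields $\lambda\cV|_\Gamma(x)+H_\Gamma(x,\partial\varphi/\partial e_0(x))\le 0$ by \eqref{eq:H-G}.

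\textbf{Supersolution.} Fix $x\in\Gamma$ and $\varphi\in C^1(\Gamma)$ such that $\cV|_\Gamma-\varphi$ has a local minimum at $x$; since the maximum is $\ge 0$ as soon as one of its arguments is, I argue by dichotomy. If $\cV|_\Gamma(x)\ge\min_{i}\{v_i(x)+c_i(x)\}$, then $\lambda\cV|_\Gamma(x)-\lambda\min_{i}\{v_i(x)+c_i(x)\}\ge 0$ and the inequality is immediate. Otherwise $\cV|_\Gamma(x)<\min_{i}\{v_i(x)+c_i(x)\}$ with a strict gap $\varepsilon_0>0$; here the positive lower bound $C>0$ on the entry costs in $[A]$ together with the continuity of the traces $v_i$ forces any sufficiently good control to keep the trajectory on $\Gamma$ for a uniform time $\bar t>0$, because an entry into any $\cP_k$ before $\bar t$ would cost at least $\min_{i}\{v_i+c_i\}$ near $x$ (up to terms that vanish with $\bar t$), strictly more than $\cV|_\Gamma(x)$. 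Along such near-optimal trajectories the velocities lie in $\tilde{F}$ but are tangent to $\Gamma$, hence correspond to speed-cost pairs in $\textsc{FL}_\Gamma(x)$ of \eqref{eq:FL}; writing the DPP as $\cV|_\Gamma(x)+\varepsilon_n\ge\int_0^t\ell(y(s),\alpha(s))e^{-\lambda s}\,ds+e^{-\lambda t}\cV|_\Gamma(y(t))$, inserting $\varphi$ from below, and passing to the limit $t\downarrow 0$ — where $[A2]$ guarantees that the limiting average speed-cost pair again belongs to the closed convex set $\textsc{FL}_\Gamma(x)$ — produces $\lambda\cV|_\Gamma(x)+H_\Gamma(x,\partial\varphi/\partial e_0(x))\ge 0$.

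I expect the supersolution step, specifically the second case, to be the main obstacle. One must show rigorously that when immediate entry is strictly suboptimal, near-optimal trajectories genuinely remain on $\Gamma$ for a positive, uniformly bounded-below time, so that the admissible dynamics are exactly those defining $H_\Gamma$ through $\textsc{FL}_\Gamma$; this is where the strict positivity of the entry costs and the uniform continuity of the $v_i$ are essential. The companion closedness/convexity argument extracting a limiting speed-cost pair inside $\textsc{FL}_\Gamma(x)$ rests on $[A2]$, while the short-time trajectory constructions underlying both the entering inequality and the $\Gamma$-tangent motions rely on the strong controllability $[A3]$ via Lemma~\ref{lem:strong control}.
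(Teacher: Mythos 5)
Your overall architecture is the same as the paper's: you split the subsolution property into the pointwise inequality $\cV|_\Gamma \le \min_i\{v_i+c_i\}$ (proved by an entering construction, as in the paper's Lemma~\ref{lem2.1}) plus the Hamiltonian viscosity inequality, and you prove the supersolution property by the dichotomy ``either $\cV|_\Gamma(x)\ge\min_i\{v_i(x)+c_i(x)\}$, or near-optimal trajectories must remain on $\Gamma$ for a uniformly positive time,'' which is exactly the content of the paper's Lemma~\ref{lem2.3} (Cases 1 and 2). The supersolution half of your argument is essentially sound, up to one repairable slip: you call $\textsc{FL}_\Gamma(x)$ a ``closed convex set,'' but by \eqref{eq:FL} it is a finite union of closed convex sets and is generally \emph{not} convex, so the limiting average speed-cost pair lies only in its closed convex hull. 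This is harmless because $H_\Gamma$ in \eqref{eq:H-G} is a supremum of affine functions of $(\zeta,\xi)$, so the supremum over the hull equals the supremum over the set --- this is precisely what the paper's Lemma~\ref{lem:fl=FL} formalizes --- but the step needs to be stated that way.

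The genuine gap is in your proof of the subsolution Hamiltonian inequality $\lambda\cV|_\Gamma(x)+H_\Gamma\bigl(x,\partial\varphi/\partial e_0(x)\bigr)\le 0$. You run the \emph{constant} control $a\in A_i^\Gamma(x)$, observe that the trajectory stays within $O(t^2)$ of $\Gamma$, and compare $\cV(y(t))$ with $\cV|_\Gamma$ at the projection of $y(t)$ via the Lipschitz bound of Lemma~\ref{lem:continuity}. Three things break. First, the constant-control trajectory need not be admissible: $f_i(\cdot,a)\cdot e_i$ vanishes only at $x$, and for points near $\Gamma$ one may have $f_i(z,a)\cdot e_i<0$, in which case the ODE solution exits $\cS$ and $\cV(y(t))$ is not even defined (the paper explicitly warns against this in the proof of Lemma~\ref{lem:value-sub-sol}). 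Second, if instead the trajectory enters $\cP_i\backslash\Gamma$, the dynamic programming inequality acquires the entry cost $c_i\ge C>0$, an $O(1)$ term that survives division by $t$ and renders the limit vacuous. Third, even setting the entry cost aside, $\cV$ is discontinuous across $\Gamma$: for $y(t)\in\cP_i\backslash\Gamma$, the quantity $\cV(y(t))$ is $O(t^2)$-close to $v_i$ at the projection, \emph{not} to $\cV|_\Gamma$ there, and $v_i$ can be strictly smaller than $\cV|_\Gamma$; Lemma~\ref{lem:continuity} gives no control on this jump, so the claimed $o(t)$ comparison fails. The paper avoids all three problems by never leaving $\Gamma$: using $[A3]$ it constructs, for each pair in $\textsc{FL}_\Gamma(x)$, an \emph{admissible} trajectory remaining exactly on $\Gamma$ that realizes this pair infinitesimally (the inclusion $\textsc{FL}_\Gamma(x)\subset f\ell_\Gamma(x)$ proved in the Appendix), and then applies the dynamic programming principle along such trajectories (Lemma~\ref{lem:fl=FL} and Lemma~\ref{lem2.2}). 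Your constant-control argument must be replaced by this Filippov-type construction, or an equivalent one; as written, the step fails.
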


The proof of Theorem~\ref{thm1} is made in several steps. The first step is to prove that $\cV|_\Gamma$ is a viscosity solution of an HJ equation with an extended definition of the Hamiltonian on $\Gamma$. For that, we consider the following larger relaxed vector field: for $x\in\Gamma$,
\[
f\ell_\Gamma (x)=\left\{ (\eta,\mu)\in \R e_0 \times \R:     \left. \begin{array}[c]{l}
\hbox{$\exists \{y_{x,n}, \alpha_n \}_{n\in \N}$}\\
\hbox{$ (y_{x,n}, \alpha_n)\in \cT_x$}\\
  \hbox{$\exists \{t_n \}_{n\in \N}$}     
    \end{array} \right.\left|
    \begin{array}[c]{l}
\hbox{$t_n\rightarrow 0^+,~y_{x,n}(t)\in \Gamma \text{ for all } t\in [0,t_n] $}\\
 \hbox{$\ds \eta =\lim_{n\rightarrow +\infty} t_n^{-1}\int_0^{t_n} f(y_{x,n} (t), \alpha_n(t)) dt$}\\
  \hbox{$\ds \mu =\lim_{n\rightarrow +\infty} t_n^{-1}\int_0^{t_n} \ell(y_{x,n} (t), \alpha_n(t)) dt$}     
    \end{array} \right.
\right\}.
\] 
We have 
\begin{lem}\label{lem:fl=FL}
For any function $\varphi\in C^1 (\Gamma)$ and $x\in \Gamma$,
\[
\sup_{(\zeta,\xi)\in f\ell_\Gamma (x)} \left\{  -(\zeta\cdot e_0 ) \dfrac{\partial \varphi}{\partial e_0} (x) - \xi \right\} = \sup_{(\zeta,\xi)\in \textsc{FL}_\Gamma (x)} \left\{ (-\zeta \cdot e_0) \dfrac{\partial \varphi}{\partial e_0} (x) - \xi \right\}. 
\]
\end{lem}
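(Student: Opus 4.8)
The plan is to fix $p:=\dfrac{\partial\varphi}{\partial e_0}(x)$ and to read both sides as the support value of a set in the single direction determined by the affine functional $g(\zeta,\xi):=-(\zeta\cdot e_0)\,p-\xi$. By the very definition of $H_\Gamma$ the right-hand side is $\sup_{(\zeta,\xi)\in\textsc{FL}_\Gamma(x)}g(\zeta,\xi)=H_\Gamma(x,p)$, so it suffices to prove the two inequalities $\sup_{f\ell_\Gamma(x)}g\le H_\Gamma(x,p)$ and $\sup_{f\ell_\Gamma(x)}g\ge H_\Gamma(x,p)$.

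For the inequality ``$\le$'' I would take any $(\eta,\mu)\in f\ell_\Gamma(x)$ with witnessing data $(y_{x,n},\alpha_n)\in\cT_x$ and $t_n\to0^+$. Since $y_{x,n}(t)\in\Gamma=\R e_0$ for every $t\in[0,t_n]$, its derivative is tangential, i.e. $f(y_{x,n}(t),\alpha_n(t))\in\R e_0$ for a.e.\ $t$; hence the active control lies in some $A_j^\Gamma(y_{x,n}(t))$ and $\bigl(f,\ell\bigr)(y_{x,n}(t),\alpha_n(t))\in\textsc{FL}_\Gamma(y_{x,n}(t))$. Consequently
\[
g\bigl(f(y_{x,n}(t),\alpha_n(t)),\,\ell(y_{x,n}(t),\alpha_n(t))\bigr)\le H_\Gamma\bigl(y_{x,n}(t),p\bigr)\quad\text{for a.e. }t\in[0,t_n].
\]
Dividing the two defining integrals by $t_n$ and letting $n\to\infty$ gives $g(\eta,\mu)\le\limsup_n\sup_{t\in[0,t_n]}H_\Gamma(y_{x,n}(t),p)$. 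Because $|y_{x,n}(t)-x|\le M t_n\to0$ and $y\mapsto\textsc{FL}_i(y)$ is continuous (so $y\mapsto\textsc{FL}_\Gamma(y)$ is upper semicontinuous, whence $H_\Gamma(\cdot,p)$ is upper semicontinuous), this last quantity is bounded by $H_\Gamma(x,p)$, proving the claim.

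For the inequality ``$\ge$'' let $(\zeta^*,\xi^*)\in\textsc{FL}_\Gamma(x)$ attain the maximum (it exists by compactness of $\textsc{FL}_{i^*}(x)\cap(\R e_0\times\R)$); say $\zeta^*=f_{i^*}(x,a^*)\in\R e_0$ and $\xi^*=\ell_{i^*}(x,a^*)$ with $a^*\in A_{i^*}^\Gamma(x)$. The idea is to manufacture, at every $y\in\Gamma$ near $x$, a velocity--cost pair in $\textsc{FL}_{i^*}(y)$ whose velocity is \emph{exactly} tangential and which tends to $(\zeta^*,\xi^*)$ as $y\to x$. The pair $(f_{i^*}(y,a^*),\ell_{i^*}(y,a^*))$ is only nearly tangential, with normal component $\varepsilon_y:=f_{i^*}(y,a^*)\cdot e_{i^*}\to0$; I would cancel it by convex-combining with a small correcting velocity $-\gamma_y e_{i^*}\in B(0,\delta)$ supplied by the strong controllability $[A3]$, choosing the weight $1-\theta_y\to0$ and $\gamma_y\to0$ so the resulting velocity is tangential. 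By the convexity $[A2]$ of $\textsc{FL}_{i^*}(y)$ this combination again lies in $\textsc{FL}_{i^*}(y)$ and is therefore of the form $(f_{i^*}(y,a'),\ell_{i^*}(y,a'))$ for a \emph{single} control $a'=a'(y)\in A_{i^*}$, so no relaxed control is needed; its velocity tends to $\zeta^*$ and, the correcting cost entering with vanishing weight, its cost tends to $\xi^*$. A Filippov measurable selection then produces an admissible trajectory on $\Gamma$ whose velocity and running-cost averages over $[0,t_n]$ converge to $(\zeta^*,\xi^*)$, so $(\zeta^*,\xi^*)\in\overline{f\ell_\Gamma(x)}$; since $g$ is continuous, $\sup_{f\ell_\Gamma(x)}g\ge g(\zeta^*,\xi^*)=H_\Gamma(x,p)$.

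The main obstacle is exactly this last construction: the definition of $f\ell_\Gamma$ demands trajectories that stay \emph{exactly} on $\Gamma$, yet the naive choice---the constant control $a^*$---drifts off $\Gamma$ into $\cP_{i^*}$ at first order in the normal direction. The device that rescues the argument is the interplay of $[A2]$ and $[A3]$: strong controllability furnishes arbitrarily small tangential/normal corrections at every point of $\Gamma$, while convexity lets me blend them with the large, cost-carrying velocity $\zeta^*$ produced by $a^*$ without losing the target cost and, crucially, keeps the blended pair attainable by a genuine control. This is also why $[A3]$ is invoked only for small correcting speeds, making the argument insensitive to the size of $|\zeta^*|$.
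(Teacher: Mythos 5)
Your proof is correct, and its skeleton is the paper's in disguise: the paper establishes the two set inclusions $\textsc{FL}_\Gamma(x)\subset f\ell_\Gamma(x)\subset\overline{\text{co}}\left(\textsc{FL}_\Gamma(x)\right)$ and then invokes linearity of $(\zeta,\xi)\mapsto -(\zeta\cdot e_0)p-\xi$, which is exactly your pair of inequalities. The mechanics differ in both halves, though, and in ways worth recording. For ``$\le$'', the paper freezes the state at $x$ via the Lipschitz bound of $[A1]$ and argues that the time-averages approach $\overline{\text{co}}\left(\textsc{FL}_\Gamma(x)\right)$; as written, that step silently uses that a pair of $\textsc{FL}_j(x)$ with \emph{nearly} tangential velocity is close to the exactly tangential slice $\textsc{FL}_j(x)\cap(\R e_0\times\R)$, which itself needs an upper semicontinuity or $[A3]$-transversality argument. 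Your variant sidesteps this: you evaluate the running pair at the moving point $y_{x,n}(t)$, where tangentiality is exact and membership in $\textsc{FL}_\Gamma(y_{x,n}(t))$ holds on the nose, and then pass to the limit through the upper semicontinuity of $H_\Gamma(\cdot,p)$ (correctly justified by the closed graph and uniform boundedness of $y\mapsto \textsc{FL}_\Gamma(y)$); this is tighter than the paper's own writing. For ``$\ge$'', the paper claims the full inclusion $\textsc{FL}_\Gamma(x)\subset f\ell_\Gamma(x)$ with a one-line appeal to controllability, whereas you treat only the maximizer $(\zeta^*,\xi^*)$ (which suffices) and spell out the key device the paper leaves implicit: cancel the normal drift $\varepsilon_y=f_{i^*}(y,a^*)\cdot e_{i^*}=O(|y-x|)$ by convex combination with a small $[A3]$-velocity $\mp\gamma_y e_{i^*}$, with weights chosen so that $1-\theta_y\to0$, and use the convexity $[A2]$ so the blend is attained by a genuine control. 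The one step you leave at the paper's level of terseness is the final existence/measurable-selection argument (solving the inclusion on $\Gamma$ for the upper semicontinuous, convex, compact-valued map $y\mapsto \textsc{FL}_{i^*}(y)\cap(\R e_0\times\R)$ and then applying Filippov's lemma to recover a control matching both velocity and cost); that is acceptable here, since the paper defers the same point to the references.
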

\begin{proof}
See Appendix.
\end{proof} 

The second step consists of proving the following lemma.
\begin{lem}\label{lem2.2}
The restriction of the value function $\cV$ on the interface $\Gamma$, $\cV|_{\Gamma}$ satisfies
\begin{equation}\label{eq:HJG2}
\lambda \cV|_\Gamma (x) + H_\Gamma \left( x, \dfrac{\partial \cV|_\Gamma}{\partial e_0} (x) \right) \le  0, \quad x\in \Gamma,
\end{equation}
in the viscosity sense.
\end{lem}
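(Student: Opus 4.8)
The plan is to combine the dynamic programming principle (DPP) with the relaxed vector field $f\ell_\Gamma$, and then to pass from $f\ell_\Gamma$ to $\textsc{FL}_\Gamma$ through Lemma~\ref{lem:fl=FL}. I would fix $x_0\in\Gamma$ and a test function $\varphi\in C^1(\Gamma)$ such that $\cV|_\Gamma-\varphi$ has a local maximum at $x_0$. Since $H_\Gamma(x_0,p)=\sup_{(\zeta,\xi)\in\textsc{FL}_\Gamma(x_0)}\{-(\zeta\cdot e_0)p-\xi\}$ by \eqref{eq:H-G}, and this supremum equals the one over $f\ell_\Gamma(x_0)$ by Lemma~\ref{lem:fl=FL}, it suffices to establish, for every $(\zeta,\xi)\in f\ell_\Gamma(x_0)$, the inequality
\[
\lambda\,\cV|_\Gamma(x_0)-(\zeta\cdot e_0)\,\frac{\partial\varphi}{\partial e_0}(x_0)-\xi\le 0,
\]
and then to take the supremum over $(\zeta,\xi)\in f\ell_\Gamma(x_0)$.

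To obtain this inequality I would fix such a pair $(\zeta,\xi)$ and take the sequences $(y_{x_0,n},\alpha_n)\in\cT_{x_0}$ and $t_n\to 0^+$ from the definition of $f\ell_\Gamma(x_0)$, for which $y_{x_0,n}(t)\in\Gamma$ on $[0,t_n]$ and $\zeta,\xi$ are the averaged limits of $f$ and $\ell$. Because the trajectory stays on $\Gamma$ on $[0,t_n]$, no half-plane is entered and no entry cost is charged there; concatenating this piece with an $\varepsilon$-optimal control issued from $y_{x_0,n}(t_n)\in\Gamma$ and letting $\varepsilon\to0$, the definition of $\cV$ should give the sub-optimality bound
\[
\cV|_\Gamma(x_0)\le \int_0^{t_n}\ell\big(y_{x_0,n}(s),\alpha_n(s)\big)e^{-\lambda s}\,ds + e^{-\lambda t_n}\,\cV|_\Gamma\big(y_{x_0,n}(t_n)\big),
\]
where $y_{x_0,n}(t_n)\in\Gamma$ lets me write the terminal value as $\cV|_\Gamma$.

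Next I would use the local maximum property, valid once $y_{x_0,n}(t_n)$ is close enough to $x_0$, to replace $\cV|_\Gamma(y_{x_0,n}(t_n))$ by $\cV|_\Gamma(x_0)-\varphi(x_0)+\varphi(y_{x_0,n}(t_n))$, then rearrange and divide by $t_n$. Letting $n\to\infty$, the left-hand side tends to $\lambda\,\cV|_\Gamma(x_0)$ via $\frac{1-e^{-\lambda t_n}}{t_n}\to\lambda$; the running-cost term tends to $\xi$ since $\ell$ is bounded and $|e^{-\lambda s}-1|\le\lambda t_n$ on $[0,t_n]$; and a first-order expansion of $\varphi$ along $\Gamma=\R e_0$ turns the $\varphi$-increment into $\frac{\partial\varphi}{\partial e_0}(\tilde x_n)\,t_n^{-1}\int_0^{t_n}f\cdot e_0\,ds$ with $\tilde x_n\to x_0$, which converges to $(\zeta\cdot e_0)\frac{\partial\varphi}{\partial e_0}(x_0)$. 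This yields the displayed inequality; taking the supremum and applying Lemma~\ref{lem:fl=FL} and \eqref{eq:H-G} closes the argument.

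The hard part will be the careful justification of the constrained DPP: I must verify that trajectories staying on $\Gamma$ actually exist, which is exactly where the strong controllability $[A3]$ enters (it provides velocities in the $e_0$-direction, so $f\ell_\Gamma(x_0)\ne\emptyset$ and, en passant, $\cV|_\Gamma$ is Lipschitz hence continuous on $\Gamma$, so the test-function definition is meaningful); that no entry cost accrues while the state remains on $\Gamma$; and that the terminal value at $y_{x_0,n}(t_n)\in\Gamma$ is genuinely $\cV|_\Gamma$ and not one of the one-sided extensions $v_i$. Once these are in place, the limit passage is routine, the only delicate bookkeeping being the uniform estimate $|e^{-\lambda s}-1|\le\lambda t_n$ and the continuity of $\partial\varphi/\partial e_0$.
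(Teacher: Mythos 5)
Your proposal is correct and follows essentially the same route as the paper: reduce via Lemma~\ref{lem:fl=FL} to the relaxed vector field $f\ell_\Gamma(x)$, apply the dynamic programming principle along the trajectories staying on $\Gamma$ from the definition of $f\ell_\Gamma$, combine with the maximum-point inequality for the test function, divide by $t_n$ and pass to the limit. The only differences are presentational (you spell out the concatenation argument behind the DPP and the absence of entry costs on $\Gamma$, which the paper invokes implicitly), so there is nothing to correct.
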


\begin{proof}
Let $x \in\Gamma$ and $\varphi\in C^1 (\Gamma)$ such that $\cV|_\Gamma-\varphi$ has a maximum at $x$,  i.e.
\begin{equation}\label{491}
\varphi(x) - \varphi (z) \le \cV|_\Gamma (x) - \cV|_\Gamma (z), \quad \text{for all }z\in \Gamma.
\end{equation}
From Lemma~\ref{lem:fl=FL}, it suffices to prove that
\begin{equation}\label{490}
\lambda V|_\Gamma (x) + \sup_{(\zeta,\xi)\in f\ell_\Gamma (x)} \left\{ (-\zeta \cdot e_0) \dfrac{\partial \varphi}{\partial e_0} (x) -\xi \right\} \le 0.
\end{equation}
Let $(\zeta,\xi) \in f\ell_\Gamma (x) $, there exist $(y_{x,n},\alpha_n)\in \cT_x$ and $t_n \rightarrow 0^+$ such that $y_{x,n} (t) \in \Gamma$ for all $t\le t_n$ and
\begin{align*}
\zeta & =\lim_{n\rightarrow+\infty}\dfrac{1}{t_{n}}\int_{0}^{t_{n}}f\left(y_{x,n}\left(t\right),\alpha_{n}\left(t\right)\right)dt=\lim_{n\rightarrow\infty}\dfrac{y_{x,n}\left(t_{n}\right)-x}{t_{n}},\\
\xi & =\lim_{n\rightarrow+\infty}\dfrac{1}{t_{n}}\int_{0}^{t_{n}}\ell\left(y_{x,n}\left(t\right),\alpha_{n}\left(t\right)\right)dt.
\end{align*}
According to \eqref{491} and the dynamic programming principle, for all $n\in \N$,
\begin{align*}
\varphi\left(x\right)-\varphi\left(y_{x,n}(t_{n})\right) & \le\mathcal{V}|_{\Gamma}\left(x\right)-\mathcal{V}|_{\Gamma}\left(y_{x,n}(t_{n})\right)\\
 & \le\int_{0}^{t_{n}}\ell\left(y_{x,n}\left(t\right), \alpha_n(t) \right)e^{-\lambda t}dt+\mathcal{V}|_{\Gamma}\left(y_{x,n}\left(t_{n}\right)\right) (e^{-\lambda t_{n}}-1 ).
\end{align*}
Dividing both sides by $t_n$, the goal is to take the limit as $n$ tends to $\infty$.
On the one hand, we have
\begin{equation}\label{492}
\lim_{n\rightarrow+\infty} \left(\dfrac{1}{t_n} \int_0^{t_n} \ell \left(y_{x,n}(t), \alpha_n(t) \right) e^{-\lambda t }dt + \dfrac{\cV|_\Gamma (y_{x,n}(t_n)) (e^{-\lambda t_n }-1)}{t_n}  \right) = \xi - \lambda V|_{\Gamma} (x).
\end{equation}
On the other hand, since $y_{x,n} (t_n) = x + t_n (\zeta + o(1)e_0 )$, we obtain 
\begin{equation}\label{493}
\lim_{n\rightarrow +\infty} \dfrac{\varphi (x) - \varphi (y_{x,n} (t_n))}{t_n}=-(\zeta \cdot e_0) \dfrac{\partial \varphi}{\partial e_0} (x) .
\end{equation}
Hence, in view of \eqref{492} and \eqref{493}, we have
\begin{equation}\label{494}
\lambda \cV|_\Gamma (x) - (\zeta \cdot e_0)  \dfrac{\partial \varphi}{\partial e_0} (x) - \xi \le 0.
\end{equation}
Thus \eqref{494} holds for any $(\zeta,\xi) \in f\ell_\Gamma (x) $ and therefore \eqref{490} holds.
\end{proof}

\begin{lem}\label{lem2.1}
Under Assumptions $[A]$ and $[A3]$, for all $x\in \Gamma$, 
\[
\max_{i=1,\ldots,N} \{ v_i (x) \} \le \cV|_\Gamma (x) \le \min_{i=1,\ldots,N} \{v_i (x) +c_i (x) \}.
\]
\end{lem}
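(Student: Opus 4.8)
The plan is to establish the two inequalities separately, each by constructing an explicit admissible controlled trajectory and invoking the fact that $\cV$ is the infimum of the cost functional $J$ over $\cT_x$, together with the way $J$ splits under concatenation (the dynamic programming principle used in the proof of Lemma~\ref{lem2.2}). Throughout I would use that $v_i(x)=\lim_{(\cP_i\backslash\Gamma)\ni z\to x}\cV(z)$ exists by Lemma~\ref{lem:continuity}, and that by $[A1]$ the running costs are bounded by $M$, so that contributions over a time interval of length $\tau$ are $O(\tau)$ and vanish as $\tau\to0$.

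For the upper bound $\cV|_\Gamma(x)\le v_i(x)+c_i(x)$, I would fix $i$ and a sequence $z_n\in\cP_i\backslash\Gamma$ with $z_n\to x$. Starting from $x\in\Gamma$, consider the strategy that enters $\cP_i$ immediately, paying the single entry cost $c_i(x_{i1})=c_i(x)$ at entry time $t_{i1}=0$, and then, using the strong controllability $[A3]$, drives the state to $z_n$ while remaining in $\cP_i\backslash\Gamma$, reaching $z_n$ at a time $\tau_n\le C\,d(x,z_n)\to0$; from $z_n$ I concatenate an $\varepsilon$-optimal trajectory for $\cV(z_n)$. Since no further half-plane is entered on $[0,\tau_n]$, the cost functional of this trajectory decomposes as $c_i(x)+\int_0^{\tau_n}\ell_i\,e^{-\lambda t}\,dt+e^{-\lambda\tau_n}\bigl(\cV(z_n)+\varepsilon\bigr)$. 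As $n\to\infty$ the integral vanishes, $e^{-\lambda\tau_n}\to1$ and $\cV(z_n)\to v_i(x)$; letting $\varepsilon\to0$ and taking the minimum over $i$ yields $\cV|_\Gamma(x)\le\min_i\{v_i(x)+c_i(x)\}$.

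For the lower bound $v_i(x)\le\cV|_\Gamma(x)$, I would fix $i$ and take $z\in\cP_i\backslash\Gamma$ close to $x$. Starting from $z$, first steer the state to $x$ inside $\cP_i$ in time $\tau\le C\,d(z,x)$ (again by controllability, keeping the trajectory in $\cP_i$ so that no entry cost is charged, since reaching $\Gamma$ from $\cP_i$ is an exit, not an entry), and then append an $\varepsilon$-optimal trajectory for $\cV|_\Gamma(x)$. This gives $\cV(z)\le\int_0^\tau\ell_i\,e^{-\lambda t}\,dt+e^{-\lambda\tau}\bigl(\cV|_\Gamma(x)+\varepsilon\bigr)$. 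Letting $z\to x$ (so $\tau\to0$) and using $\cV(z)\to v_i(x)$ from Lemma~\ref{lem:continuity}, then $\varepsilon\to0$ and taking the maximum over $i$, gives $\max_i v_i(x)\le\cV|_\Gamma(x)$.

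The main obstacle is the careful bookkeeping of entry costs in the concatenations: one must ensure that each connecting trajectory stays within the single half-plane $\cP_i$, so that the only entry charged is the intended $c_i(x)$ in the first part and none at all in the second. This confinement near $\Gamma$ is exactly what strong controllability $[A3]$ provides, via the in-plane construction underlying Lemma~\ref{lem:strong control}. The remaining estimates — that the short-time running cost is $O(\tau)$ and that the discount factor tends to $1$ — are routine consequences of $[A1]$, and the existence of the limits defining $v_i$ is guaranteed by Lemma~\ref{lem:continuity}.
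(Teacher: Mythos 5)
Your proposal is correct and follows essentially the same route as the paper's proof: both inequalities are obtained by concatenating a connecting trajectory furnished by Lemma~\ref{lem:strong control} (from $z\in\cP_i\backslash\Gamma$ to $x\in\Gamma$ for the lower bound, and from $x$ into $\cP_i$ for the upper bound, with the single entry cost $c_i(x)$ charged at time $0$) with a nearly optimal continuation, then using the bound $|\ell_i|\le M$ and letting $z\to x$ so that $\tau\to 0$. The only cosmetic difference is that you work with $\varepsilon$-optimal continuations while the paper concatenates an arbitrary admissible control and then takes the infimum, which is equivalent.
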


\begin{proof}
Let $i\in \{1,\ldots,N\}$, $x\in \Gamma$ and $z\in \cP_i \backslash \Gamma$ such that $|x-z|$ is small.
It suffices to prove (a) $ v_i (x) \le \cV|_\Gamma (x) $ and (b) $ \cV|_\Gamma (x)\le v_i(x)+c_i (x) $.

\begin{enumerate}
	\item [(a)] Consider any control law $\alpha$ such that $(y_{x}, \alpha) \in \cT_x$.
	Let $\alpha_{z,x}$ be a control law which connects $z$ to $x$ (which exists thanks to Lemma~\ref{lem:strong control}) and consider the control law
	\[
	\hat{\alpha}(x) =
	\begin{cases}
	\alpha_{z,x} (s) & \text{if } s\le \tau_{z,x},\\
	\alpha(s-\tau_{z,x}) & \text{if } s>\tau_{z,x}.
	\end{cases}
	\] 
	This means that the trajectory goes from $z$ to $x$ with the control law $\alpha_{z,x}$ and then proceeds with the control law $\alpha$. 
	Therefore,
	\[
	\cV (z) = v_i (z) \le J(z,\hat{\alpha}) = \int_0^{\tau_{z,x}} \ell_i \left(y_{z,\hat{\alpha}} (s), \hat{\alpha}(s) \right) e^{-\lambda s} ds  + e^{-\lambda \tau_{z,x}}  J(x,\alpha). 
	\]
	Since $\alpha$ is chosen arbitrarily and $\ell_i$ is bounded by $M$, we obtain
	\[
	v_i(z) \le M \tau_{z,x} + e^{-\lambda \tau_{z,x}} \cV(x). 
	\]
	Let $z$ tend to $x$ (then $\tau_{z,x}$ tends to $0$ by Lemma~\ref{lem:strong control}), we conclude $v_i(x)\le \cV(x)$. 
	\item [(b)] Consider any control law $\alpha_z$  such that $(y_{z}, \alpha_z)\in \cT_z$ and use
	Lemma~\ref{lem:strong control} to pick a control law $\alpha_{x,z}$ connecting $x$ to $z$. Consider the control law 
	\[
	\hat{\alpha}(x)=
	\begin{cases}
	\alpha_{x,z}(s) & \text{if } s\le \tau_{x,z},\\
	\alpha_z (s-\tau_{x,z}) & \text{if } s > \tau_{x,z},
	\end{cases}
	\]
	for which the trajectory $y_{x,\hat{\alpha}}$ goes from $x$ to $z$ using the control law $\alpha_{x,z}$ and then proceeds with the control law $\alpha_z$. Therefore,
	\[
	\cV(x) \le J(x,\hat{\alpha}) = c_i (x) + \int^{\tau_{x,z}}_0 \ell_i \left( y_{x,\hat{\alpha}} (s), \hat{\alpha}(s) \right) e^{-\lambda s }ds + e^{-\lambda \tau_{x,z}} J(z,\alpha_z).
	\]
	Since $\alpha_z$ is chosen arbitrarily and $\ell_i$ is bounded by $M$, we obtain
	\[
	\cV(x) \le c_i(x) + M\tau_{x,z}+ e^{-\lambda \tau_{x,z}} v_i(z).
	\]
	Let $z$ tend $x$ (then $\tau_{x,z}$ tends to $0$, by Lemma~\ref{lem:strong control}), we conclude $\cV(x)\le c_i(x) + v_i (x)$.
\end{enumerate}

\end{proof}

From Lemma~\ref{lem2.2} and Lemma~\ref{lem2.1}, we conclude that $\cV|_{\Gamma}$ is a viscosity subsolution of~\eqref{eq:HJG}. 
The last step of of the proof of Theorem~\ref{thm1} is to prove that $\cV|_{\Gamma}$ is a viscosity supersolution of~\eqref{eq:HJG}.
\begin{lem}\label{lem2.3}
The restriction of the value function $\cV$ on the interface $\Gamma$, $\cV|_{\Gamma}$ satisfies
\begin{equation*}
\lambda \cV|_\Gamma (x) + \max \left\{   -\lambda \min_{i=1,\ldots,N} \{v_i(x) +c_i(x) \}, 
H_\Gamma \left( x, \dfrac{\partial \cV|_\Gamma}{\partial e_0} (x) \right) \right\} \ge  0, \quad x\in \Gamma,
\end{equation*}
in the viscosity sense.
\end{lem}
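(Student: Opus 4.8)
The plan is to reduce the supersolution inequality to a dichotomy driven by the two entries of the $\max$, using Lemma~\ref{lem2.1}, and then to treat the genuinely new contribution --- the Hamiltonian term on $\Gamma$ --- by a dwell-time argument combined with the relaxed field $f\ell_\Gamma$. Fix $x\in\Gamma$ and $\varphi\in C^1(\Gamma)$ such that $\cV|_\Gamma-\varphi$ has a local minimum at $x$, and write $m(x):=\min_{i=1,\dots,N}\{v_i(x)+c_i(x)\}$, which is continuous since each $v_i$ is continuous (Lemma~\ref{lem:continuity}) and each $c_i$ is Lipschitz. By Lemma~\ref{lem2.1} one always has $\cV|_\Gamma(x)\le m(x)$. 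If $\cV|_\Gamma(x)=m(x)$, the first entry of the $\max$ equals $-\lambda\cV|_\Gamma(x)$, so $\lambda\cV|_\Gamma(x)+\max\{-\lambda m(x),H_\Gamma(x,\frac{\partial\varphi}{\partial e_0}(x))\}\ge\lambda\cV|_\Gamma(x)-\lambda m(x)=0$ and we are done. It therefore remains to establish, in the strict case $\cV|_\Gamma(x)<m(x)$, the single inequality $\lambda\cV|_\Gamma(x)+H_\Gamma(x,\frac{\partial\varphi}{\partial e_0}(x))\ge 0$.

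The heart of the argument is that the strict inequality $\cV|_\Gamma(x)<m(x)$ forces near-optimal trajectories to remain on $\Gamma$ for a uniformly positive time. Indeed, pick $\delta>0$ with $\cV|_\Gamma(x)\le m(x)-2\delta$. For a control $\alpha$ with first exit time $t_1:=\inf\{t>0:y_{x,\alpha}(t)\notin\Gamma\}$, decomposing the cost at $t_1$ and using that any continuation entering a half-plane $\cP_k$ costs at least $c_k(y_{x,\alpha}(t_1))+v_k(y_{x,\alpha}(t_1))\ge m(y_{x,\alpha}(t_1))$ gives
\[
J(x,\alpha)\ge\int_0^{t_1}\ell\,e^{-\lambda s}\,ds+e^{-\lambda t_1}m(y_{x,\alpha}(t_1)).
\]
Since $\ell$ is bounded by $M$, $|y_{x,\alpha}(t_1)-x|\le Mt_1$, and $m$ is continuous, the right-hand side is $\ge m(x)-C't_1$ for some constant $C'$. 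Hence any $\alpha$ with $J(x,\alpha)\le\cV|_\Gamma(x)+\delta\le m(x)-\delta$ must satisfy $t_1\ge\delta/C'=:\tau_0>0$. In particular, choosing for each small $\tau<\tau_0$ a control $\alpha_\tau$ with $J(x,\alpha_\tau)\le\cV|_\Gamma(x)+\tau^2$, the trajectory $y_\tau:=y_{x,\alpha_\tau}$ stays on $\Gamma$ throughout $[0,\tau]$, so no entry cost is paid there and $y_\tau(\tau)\in\Gamma$.

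Next I would invoke the dynamic programming principle together with the minimality of $\cV|_\Gamma-\varphi$ at $x$, exactly as in the proof of Lemma~\ref{lem2.2} but with the reverse inequality: from $\cV|_\Gamma(x)+\tau^2\ge\int_0^\tau\ell(y_\tau,\alpha_\tau)e^{-\lambda s}\,ds+e^{-\lambda\tau}\cV|_\Gamma(y_\tau(\tau))$ and $\cV|_\Gamma(y_\tau(\tau))\ge\varphi(y_\tau(\tau))+\cV|_\Gamma(x)-\varphi(x)$, after rearranging and dividing by $\tau$ I pass to the limit $\tau\to0^+$. Setting $\zeta_\tau:=\frac1\tau\int_0^\tau f(y_\tau,\alpha_\tau)\,ds$ and $\xi_\tau:=\frac1\tau\int_0^\tau\ell(y_\tau,\alpha_\tau)\,ds$, boundedness permits extracting a subsequence with $\zeta_\tau\to\zeta$, $\xi_\tau\to\xi$; because $y_\tau\in\Gamma$ on $[0,\tau]$ and $\tau\to0^+$, the pair $(\zeta,\xi)$ lies in $f\ell_\Gamma(x)$ by its very definition. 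The limit then reads $\lambda\cV|_\Gamma(x)\ge\xi+(\zeta\cdot e_0)\frac{\partial\varphi}{\partial e_0}(x)$, that is
\[
\lambda\cV|_\Gamma(x)-(\zeta\cdot e_0)\dfrac{\partial\varphi}{\partial e_0}(x)-\xi\ge0.
\]
Since $(\zeta,\xi)\in f\ell_\Gamma(x)$, Lemma~\ref{lem:fl=FL} and the definition of $H_\Gamma$ give $-(\zeta\cdot e_0)\frac{\partial\varphi}{\partial e_0}(x)-\xi\le H_\Gamma(x,\frac{\partial\varphi}{\partial e_0}(x))$, and adding $\lambda\cV|_\Gamma(x)$ yields the desired Hamiltonian inequality.

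The main obstacle is the passage to the limit in the third paragraph, where the near-optimal controls $\alpha_\tau$ vary with $\tau$: one cannot pass to a limiting control pointwise, which is precisely why the averaged velocities and costs must be controlled and why the relaxed field $f\ell_\Gamma$ --- together with the identity $\sup_{f\ell_\Gamma}=\sup_{\textsc{FL}_\Gamma}$ of Lemma~\ref{lem:fl=FL} --- is indispensable for identifying the limit with a quantity dominated by $H_\Gamma$. A secondary technical point is the uniform dwell-time bound $\tau_0$, which rests on the continuity of $m$ and, implicitly, on the positivity of the entry costs guaranteeing that entering a half-plane is genuinely penalized; without strict positivity of the $c_i$ the first exit time could collapse to $0$ and the argument would fail.
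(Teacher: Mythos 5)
Your proof is correct and follows essentially the same route as the paper's: reduce to the strict case $\cV|_\Gamma(x) < \min_i\{v_i(x)+c_i(x)\}$, show that near-optimal trajectories must dwell on $\Gamma$ for a uniformly positive time via the cost decomposition at the first exit time (the paper phrases this as a subsequence/contradiction dichotomy, you as a quantitative dwell-time bound), and then combine the dynamic programming principle on $\Gamma$ with compactness of the averaged velocities and costs, the relaxed field $f\ell_\Gamma$, and Lemma~\ref{lem:fl=FL} to conclude. The remaining differences are purely presentational, e.g.\ your $\tau^2$-optimal controls indexed by the horizon $\tau$ versus the paper's $\varepsilon_n$-optimal controls with times $t_n$ chosen so that $\varepsilon_n/t_n\to0$.
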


\begin{proof}
Let $ x\in \Gamma $ and assume that
\begin{equation}\label{eq:cV<ci}
\cV(x) < \min_{i=1,\ldots,N} \{ v_i(x) + c_i(x) \},\quad \text{for all } n \in \N,
\end{equation}
it suffices to prove that $\cV(x)$ satisfies 
\[
\lambda \cV|_\Gamma (x) + H_\Gamma \left( x, \dfrac{\partial \cV|_\Gamma}{\partial e_0} (x) \right) \ge 0,   
\]
in the viscosity sense. 
Let $ \{\varepsilon_n\} $ be a sequence which tends to $ 0 $. For any $n$, let $\alpha_n$ be an $\varepsilon_n$-optimal control, i.e. $\cV(x) + \varepsilon_n > J(x,\alpha_n)$, and $ \tau_n $ be the first time the trajectory  $y_{x,\alpha_n}$ leaves $\Gamma$, i.e.
\[
\tau_n:= \inf_{i=1,\ldots,N} T_{x,\alpha_n}^i, \text{ where } T_{x,\alpha_n}^i := \{t\in \R^+ :  y_{x,\alpha_n} (t) \in \cP_i \backslash \Gamma \}.
\]
We note that $\tau_n$ is possibly $+\infty$, in which case the trajectory $y_{x,\alpha_n}$ stays on $\Gamma$ for all $s\in [0,+\infty)$.
We consider the two following cases:
\begin{description}
\item{\emph{Case 1:}} There exists a subsequence of $\{\tau_n\}$ (which is still denoted $\{\tau_n\}$) such that $\tau_n \rightarrow 0$ as $n\rightarrow +\infty$ and at time $\tau_n$ the trajectory enters $\cP_{i_0}$, for some $i_0 \in \{1,\ldots,N \}$.
This implies
\begin{align*}
 \cV(x) + \varepsilon_n & >  J(x,\alpha_n) \\
& =   \int_0^{\tau_n} \ell \left( y_{x,\alpha_n} (s), \alpha_n (s) \right) e^{-\lambda s} ds + c_{i_0}(y_{x,\alpha_n} (\tau_n)) e^{-\lambda \tau_n} + v_{i_0} (y_{x,\alpha_n} (\tau_n)) e^{-\lambda \tau_n}.
\end{align*}
Since $ \ell $ is bounded by $ M $, sending $n$ to $+\infty$, yields
\[
\cV (x) \ge c_{i_0}(x) +v_{i_0} (x),
\]
which leads to a contradiction to \eqref{eq:cV<ci}.

\item{\emph{Case 2:}} There exist a subsequence of $\{\tau_n\}$ (which is still denoted $\{\tau_n\}$) and a positive constant $C$ such that $\tau_n >C$.
This means that from $0$ to $C$, the trajectory $y_{x,\alpha_n}$ still remains in $\Gamma$. Thus, for all  $\tau\in [0,C],$
\begin{align}\label{497}
\cV|_\Gamma (x)  + \varepsilon_n & \ge  \int_{0}^{\tau}\ell\left(y_{x,n}\left(t\right), \alpha_n(t) \right)e^{-\lambda t}dt+\mathcal{V}|_{\Gamma}\left(y_{x,n}\left(\tau\right)\right)  e^{-\lambda \tau}\nonumber\\
& \ge  \int_{0}^{\tau}\ell\left(y_{x,n}\left(t\right), \alpha_n(t) \right)dt+\mathcal{V}|_{\Gamma}\left(y_{x,n}\left(\tau\right)\right)  e^{-\lambda \tau}+ o(\tau),
\end{align}
where $o(\tau)/\tau \rightarrow 0$ as $\tau \rightarrow 0$ and the last inequality is obtained by using the boundedness of $\ell$.
Let $\varphi\in C^1 (\Gamma)$ such that $\cV|_\Gamma-\varphi$ has a minimum on $ \Gamma $ at $x$, i.e.
\begin{equation}\label{498}
\varphi(x) - \varphi (z) \ge \cV|_\Gamma (x) - \cV|_\Gamma (z), \quad \text{for all }z\in \Gamma.
\end{equation}
From Lemma~\ref{lem:fl=FL}, it suffices to prove that
\begin{equation}\label{400}
\lambda \cV|_\Gamma (x) + \max_{(\zeta,\xi)\in f\ell_\Gamma (x) } \left\{ - (\zeta \cdot e_0)  \dfrac{\partial \varphi}{\partial e_0} (x) -\xi \right\} \ge 0. 
\end{equation}
Since $ \lim_{n\rightarrow \infty} \varepsilon_n =0 $, it is possible to choose a sequence $ \{t_n\} $ such that $0<t_n <C $ and $ \varepsilon_n / t_n \rightarrow 0 $ as $ n \rightarrow \infty $.
Thus from \eqref{497} and \eqref{498}, we obtain
\begin{equation}\label{499}
\dfrac{\varphi (x) - \varphi(y_{x,n}(t_n))}{t_n} -\dfrac{1}{t_n} \int_0^{t_n} \ell(y_{x,n} (t), \alpha_n (t) ) dt + \dfrac{1-e^{-\lambda t_n}  }{t_n}\cV|_\Gamma (y_{x,n} (t_n) ) \ge -\dfrac{\varepsilon_n}{t_n} + o(1). 
\end{equation}
Since $f$ and $\ell$ are bounded, then the sequence  $\left\{ \frac{y_{x,n} (t_n) - x }{t_n}, \frac{1}{t_n} \int_0^{t_n} \ell(y_{x,n} (t), \alpha_n (t) ) dt  \right\}$ is bounded in $\Gamma \times \R$. Therefore, we can extract a subsequence of this sequence which converges to $(\bar{\zeta},\bar{\xi})$ as $n\rightarrow +\infty$.
Obviously, we have $\left( \bar{\zeta},\bar{\xi} \right) \in f\ell_\Gamma (x)$.
Hence, sending $n$ to $\infty$ in \eqref{499}, we obtain
\[
\lambda \cV|_\Gamma (x) - (\bar{\zeta}\cdot e_0) \dfrac{\partial \varphi}{\partial e_0} (x) -\bar{\xi}  \ge 0,
\]
and thus \eqref{400} holds.

\end{description}

\end{proof}

\subsection{The Hamilton-Jacobi system and viscosity solutions}\label{sec4}
\subsubsection{Admissible test-functions}

\begin{defn}
A function $\varphi:\cP_1 \times\ldots\times \cP_N \times \Gamma \rightarrow \R^N$ is an admissible test-function if there exist $(\varphi_1,\ldots,\varphi_N, \varphi_\Gamma)$, $\varphi_i\in C^1(\cP_i)$ and $ \varphi_\Gamma \in C^1(\Gamma) $, such that $\varphi(x_1,\ldots,x_N,x_\Gamma)=(\varphi_1 (x_1), \ldots, \varphi_N (x_N), \varphi_\Gamma (x_\Gamma) )$. The set of admissible test-function is denoted by $\mathcal{R}(\cS)$.
\end{defn}

\subsubsection{Hamilton-Jacobi system}

Define the Hamiltonian $H_i : \cP_i \times (\R e_0 \times \R e_i) \rightarrow \R$ by  
\[
H_i(x,p)=\max_{a\in A_i} \left\{ -f_i(x,a)\cdot p - \ell_i (x,a) \right\}
\]
and the Hamiltonian $H_i^+ : \Gamma \times (\R e_0 \times \R e_i) \rightarrow \R$ by
\[
H_i^+(x,p)=\max_{a\in A_i^+(x) } \left\{ -f_i(x,a)\cdot p - \ell_i (x,a) \right\},
\]
where $A_i^+ (x) = \{a\in A_i : f_i(x,a)\cdot e_i \ge 0 \}$ and consider the following Hamilton-Jacobi system
\begin{equation}\label{eq:HJmain}
\left\{\begin{array}{rr}
\lambda u_{i}\left(x\right)+H_{i}\left(x, \partial u_i (x) \right)=0, & \text{if } x\in \cP_{i}\backslash \Gamma,\\
 \lambda u_{i} (x)+\max\left\{H_{i}^{+}\left(x,\partial u_i (x) \right), -\lambda u_\Gamma (x) \right\} =0, & \text{if } x\in \Gamma, \\
 \ds \lambda u_{\Gamma}(x) + \max \left\{ -\lambda \min_{i=1,\ldots,N} \{u_i (x) + c_i(x) \} , H_\Gamma \left( x,\dfrac{\partial u_\Gamma}{\partial e_0}  (x) \right) \right\}=0,\\
 i\in \{1,\ldots, N \}, 
\end{array}\right.
\end{equation}
and its viscosity solution $U:=\left(u_{1},\ldots,u_{N}, u_\Gamma\right)$. 

\begin{defn}[Viscosity solution with entry costs]\label{def:vis-sol}

$ $
\begin{itemize}
	\item A function $U:=\left(u_{1},\ldots,u_{N},u_\Gamma\right)$
	where $u_{i}\in USC\left(\cP_{i};\mathbb{R}\right)$ for all $i\in \{1,\ldots, N \}$ and $ u_\Gamma \in C(\Gamma,\R) $, is
	called a \emph{viscosity subsolution} of~\eqref{eq:HJmain}
	if for any $\left(\varphi_{1},\ldots,\varphi_{N}, \varphi_\Gamma\right)\in\mathcal{R}(\cS)$,
	any $i\in\{1,\ldots, N \}$ and any $x_{i}\in \cP_{i}$, $ x\in \Gamma $ such that $u_{i}-\varphi_{i}$
	has a \emph{local maximum point} on $\cP_{i}$ at $x_{i}$ and $ u_\Gamma - \varphi_\Gamma $ has a \emph{local maximum point} on $\Gamma$ at $x$, then
	\begin{align*}
	\lambda u_{i}\left(x_i\right)+H_{i}\left(x_i, \partial \varphi_i (x_i) \right) \le 0, & \quad\text{if } x_i\in \cP_{i}\backslash \Gamma,\\
	\lambda u_{i} (x_i)+\max\left\{H_{i}^{+}\left(x_i,\partial \varphi_i (x_i) \right), -\lambda u_\Gamma (x_i) \right\} \le 0, & \quad \text{if } x_i\in \Gamma,\\
	\ds \lambda u_{\Gamma}(x) + \max \left\{ -\lambda \min_{i=1,\ldots,N} \{u_i (x) + c_i(x) \} , H_\Gamma \left( x,\dfrac{\partial \varphi_\Gamma}{\partial e_0}  (x) \right) \right\} \le 0, 
	\end{align*}
	\item A function $  U:=\left(u_{1},\ldots,u_{N}, u_\Gamma\right)$
	where $u_{i}\in LSC\left(\cP_{i};\mathbb{R}\right)$ for all $i\in \{1,\ldots, N \}$ and $ u_\Gamma \in C(\Gamma;\R) $, is
	called a \emph{viscosity supersolution} of~\eqref{eq:HJmain}
	if for any $\left(\varphi_{1},\ldots,\varphi_{N}, \varphi_\Gamma\right)\in\mathcal{R}(\cS)$,
	any $i\in\{1,\ldots, N \}$ and any $x_{i}\in \cP_{i}$ and $ x\in \Gamma $ such that $u_{i}-\varphi_{i}$
	has a \emph{local minimum point} on $\cP_{i}$ at $x_{i}$  and $ u_\Gamma - \varphi_\Gamma $ has a \emph{local minimum point} on $\Gamma$ at $x$, then
	\begin{align*}
	\lambda u_{i}\left(x_i\right)+H_{i}\left(x_i, \partial \varphi_i (x_i) \right) \ge 0 & \quad \text{if } x_i\in \cP_{i}\backslash \Gamma,\\
	\lambda u_{i} (x_i)+\max\left\{H_{i}^{+}\left(x_i,\partial \varphi_i (x_i) \right), -\lambda u_\Gamma (x_i) \right\} \ge 0 & \quad \text{if } x_i\in \Gamma,\\
	\ds \lambda u_{\Gamma}(x) + \max \left\{ -\lambda \min_{i=1,\ldots,N} \{u_i (x) + c_i(x) \} , H_\Gamma \left( x,\dfrac{\partial \varphi_\Gamma}{\partial e_0}  (x) \right) \right\} \ge 0,
	\end{align*} 
	\item A functions $  U:=\left(u_{1},\ldots,u_{N}, u_\Gamma\right)$
	where $u_{i}\in C\left(\cP_{i};\mathbb{R}\right)$ for all $i\in \{1,\ldots,N\}$ and $ u_\Gamma \in C(\Gamma;\R) $, is called
	a \emph{viscosity solution} of~\eqref{eq:HJmain}
	if it is both a viscosity subsolution and a viscosity supersolution
	of~\eqref{eq:HJmain}.
\end{itemize}

\end{defn}
\subsection{Relations between the value function and the HJ system}\label{sec5}
In this section, we wish to prove that
\begin{thm}
	Under Assumptions [A] and [A3], $ V:=(v_1,\ldots,v_N, \cV|_\Gamma )$ is a viscosity solution of~\eqref{eq:HJmain}, where the functions $v_i$ are defined in \eqref{eq:v-i}.
\end{thm}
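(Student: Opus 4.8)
The system \eqref{eq:HJmain} consists of three groups of equations, and I would verify the viscosity sub- and supersolution properties for each separately, since two of the three are already in hand. The third equation (on $\Gamma$, governing $u_\Gamma$) is, once we substitute $u_\Gamma=\cV|_\Gamma$ and $u_i=v_i$, literally equation \eqref{eq:HJG}; hence Theorem~\ref{thm1} furnishes both the sub- and the supersolution inequalities at every $x\in\Gamma$ and nothing further is needed there. For the first equation (interior of $\cP_i$), I would invoke the classical theory of infinite-horizon control: on the open set $\cP_i\backslash\Gamma$ the data $f_i,\ell_i$ make the problem, locally around any interior point, an ordinary control problem in the plane $\R e_0\times\R e_i$, no entry cost being payable while the trajectory stays in the interior. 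Since $v_i=\cV$ on $\cP_i\backslash\Gamma$ is continuous by Lemma~\ref{lem:continuity} and satisfies the dynamic programming principle, the standard derivation shows $v_i$ is a viscosity solution of $\lambda v_i+H_i(x,\partial v_i)=0$ there.

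The substance of the proof is the boundary condition, i.e.\ the second equation at points $x\in\Gamma$. For the subsolution inequality I must check both entries of the $\max$: the inequality $-\lambda\cV|_\Gamma(x)\le-\lambda v_i(x)$ is exactly $v_i(x)\le\cV|_\Gamma(x)$, which is the left-hand estimate of Lemma~\ref{lem2.1}; the inequality $\lambda v_i(x)+H_i^+(x,\partial\varphi_i(x))\le 0$ I would obtain by a dynamic-programming argument ``from the interior''. Concretely, given a test function $\varphi_i$ with $v_i-\varphi_i$ attaining a local maximum at $x$ and a control $\bar a\in A_i^+(x)$ (so $f_i(x,\bar a)\cdot e_i\ge0$), I would run the constant-control trajectory, which remains in $\cP_i$, start it from interior points $z=y(\varepsilon)\to x$ where $\cV=v_i$, apply the sub-optimality half of the DPP on $[\varepsilon,\theta]$, let $\varepsilon\to0$ to reach the base point $x$, and finally divide by $\theta$ and send $\theta\to0^+$; the maximum property of $\varphi_i$ turns this into $\lambda v_i(x)-f_i(x,\bar a)\cdot\partial\varphi_i(x)-\ell_i(x,\bar a)\le0$, and taking the supremum over $\bar a\in A_i^+(x)$ gives the $H_i^+$ inequality (the tangential controls $f_i\cdot e_i=0$ being handled by continuity of $a\mapsto(f_i,\ell_i)$).

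For the supersolution inequality at $x\in\Gamma$ I would use the estimate $v_i(x)\le\cV|_\Gamma(x)$ of Lemma~\ref{lem2.1} to split into two cases. If $v_i(x)=\cV|_\Gamma(x)$, then the entry term satisfies $\lambda v_i(x)-\lambda\cV|_\Gamma(x)=0\ge0$, so the $\max$ is nonnegative and the inequality holds trivially. If $v_i(x)<\cV|_\Gamma(x)$, I must produce $\lambda v_i(x)+H_i^+(x,\partial\varphi_i(x))\ge0$. Here I take interior points $z_n\to x$ with $\cV(z_n)\to v_i(x)$ and $\varepsilon_n$-optimal controls, and I claim their trajectories cannot reach $\Gamma$ at times tending to $0$: if they did, then passing through a $\Gamma$-point where the cost-to-go is $\cV|_\Gamma(\cdot)\to\cV|_\Gamma(x)$ would force $v_i(x)\ge\cV|_\Gamma(x)$ via the DPP, the nonnegativity of the entry costs, and the boundedness of $\ell$ by $M$, contradicting $v_i(x)<\cV|_\Gamma(x)$. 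Hence the near-optimal trajectories remain in $\cP_i\backslash\Gamma$ for a uniformly positive time, their realized velocities point into $\cP_i$ (their $e_i$-component is nonnegative in the limit, because the trajectories start arbitrarily close to $\Gamma$ yet stay in $\cP_i$), and the usual super-optimality DPP computation with $\varphi_i$ then yields $\lambda v_i(x)+H_i^+(x,\partial\varphi_i(x))\ge0$.

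The main obstacle is precisely this last case of the supersolution inequality: one must exploit the strict inequality $v_i(x)<\cV|_\Gamma(x)$ coming from Lemma~\ref{lem2.1} both to rule out instantaneous escape to $\Gamma$ and to guarantee that the limiting optimal velocity is admissible for $H_i^+$, i.e.\ points into $\cP_i$. Controlling these near-optimal trajectories uniformly as $z_n\to x$, together with the correct identification of the sign of the limiting velocity, is the delicate point, whereas the subsolution side and the interior and interface equations are comparatively routine or already established.
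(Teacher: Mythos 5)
Your proposal is correct and is essentially the paper's own proof: the equation on $\Gamma$ is delegated to Theorem~\ref{thm1}, the interior equation to classical dynamic programming, and the interface condition for $v_i$ is split exactly as in Lemmas~\ref{lem:value-sub-sol} and~\ref{lem:value-super-sol} --- the subsolution side via $v_i\le\cV|_\Gamma$ (Lemma~\ref{lem2.1}) plus a constant-control DPP argument, the supersolution side via the dichotomy $v_i(x)=\cV|_\Gamma(x)$ versus $v_i(x)<\cV|_\Gamma(x)$, where your ``no instantaneous escape to $\Gamma$'' claim is precisely the paper's Lemma~\ref{lem:remain} and your sign argument for the limiting velocity is the paper's proof that the limiting control lies in $A_i^+(x)$. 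The one imprecision is on the subsolution side: your assertion that the constant-control trajectory ``remains in $\cP_i$'' for every $\bar a\in A_i^+(x)$ fails for tangential controls (the paper explicitly warns that $f_i(x,\bar a)\cdot e_i=0$ may give $f_i(z,\bar a)\cdot e_i<0$ at all nearby interior points $z$), and the passage from strictly inward controls to all of $A_i^+(x)$ is justified not by continuity of $a\mapsto(f_i,\ell_i)$ alone but by approximating tangential (speed, cost) pairs within $\textsc{FL}_i(x)$ using convexity $[A2]$ together with $[A3]$ --- the same closing step the paper compresses into ``by $[A]$, it is easy to check'', so this is a fixable imprecision rather than a gap.
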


\begin{proof}
	
	By Theorem~\ref{thm1}, $ \cV|_\Gamma $ is a viscosity solution of~\eqref{eq:HJG}. 
	Furthermore, if $ x \in \cP_i \backslash \Gamma $ , for any $ i \in \{1,\ldots,N \} $ and $ (y_x,\alpha) \in \cT_{x} $, there exists a time $ \tau $ small enough so that $ y_{x,\alpha} (t) \in \cP_i \backslash \Gamma$ for $ 0\le t \le \tau $. Thus, the proof in this case is classical by using dynamic programming principle (see \cite{BD1997,Barles1994}) and we do not detail it.
	Now assume $ x\in \Gamma $, we shall prove that for all $i\in \{1,\ldots,N\}$, the function $v_i$ satisfies
	\begin{equation}\label{eq:HJG3}
	\lambda v_{i} (x)+\max\left\{H_{i}^{+}\left(x,\partial v_i (x) \right),-\lambda \cV|_\Gamma (x) \right\} =0,\quad \text{on }  \Gamma, 
	\end{equation}
	in the viscosity sense. 
	The proof of this case is a consequence of Lemma~\ref{lem:value-sub-sol} and Lemma~\ref{lem:value-super-sol} below.

\end{proof}

\begin{lem}\label{lem:value-sub-sol}
For $i\in\{1,\ldots N\}$, the function $v_i$ satisfies
\[ 
 \lambda v_{i} (x)+\max\left\{H_{i}^{+}\left(x,\partial v_i (x) \right), -\lambda \cV|_\Gamma (x) \right\} \le 0,\quad \text{on }  \Gamma, 
 \]
in the viscosity sense.
\end{lem}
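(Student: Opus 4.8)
The plan is to argue directly from the dynamic programming principle near $\Gamma$, distinguishing the two competing mechanisms encoded in the $\max$. Fix $x\in\Gamma$ and a test-function $\varphi_i\in C^1(\cP_i)$ such that $v_i-\varphi_i$ has a local maximum at $x$. By Lemma~\ref{lem2.1} we have $\cV|_\Gamma(x)\ge v_i(x)$, which immediately yields $-\lambda\cV|_\Gamma(x)\le-\lambda v_i(x)$. Combined with the trivial bound $\lambda v_i(x)-\lambda\cV|_\Gamma(x)\le 0$, the term $\lambda v_i(x)-\lambda\cV|_\Gamma(x)$ is always nonpositive, so to establish the subsolution inequality it suffices to show
\[
\lambda v_i(x)+H_i^+\bigl(x,\partial\varphi_i(x)\bigr)\le 0.
\]
The strategy is therefore to produce, for each admissible ``speed'' $f_i(x,a)$ with $a\in A_i^+(x)$ (i.e.\ $f_i(x,a)\cdot e_i\ge 0$), a short admissible trajectory that realizes this direction and to feed it into the dynamic programming principle.

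First I would recall that $v_i(x)=\lim_{(\cP_i\setminus\Gamma)\ni z\to x}\cV(z)$ by \eqref{eq:v-i}, and that by Lemma~\ref{lem:continuity} this limit exists and $v_i$ is continuous up to $\Gamma$. The key point is that for $a\in A_i^+(x)$ with $f_i(x,a)\cdot e_i>0$, the drift points strictly into $\cP_i\setminus\Gamma$, so the trajectory $y_{x,\alpha}$ with the constant control $\alpha\equiv a$ enters the open half-plane for small $t>0$ and stays there; no entry cost is paid because the motion originates on $\Gamma$ and proceeds into $\cP_i$ along a single continuous arc without a fresh crossing. Applying the dynamic programming principle along this arc gives, for small $\tau>0$,
\[
v_i(x)\le\int_0^\tau\ell_i\bigl(y_{x,\alpha}(s),a\bigr)e^{-\lambda s}\,ds+e^{-\lambda\tau}v_i\bigl(y_{x,\alpha}(\tau)\bigr).
\]
Using the local maximum property $v_i(x)-v_i(z)\le\varphi_i(x)-\varphi_i(z)$ together with a first-order Taylor expansion of $\varphi_i$, dividing by $\tau$, and sending $\tau\to0^+$, I would obtain
\[
\lambda v_i(x)-f_i(x,a)\cdot\partial\varphi_i(x)-\ell_i(x,a)\le 0.
\]
The boundary case $f_i(x,a)\cdot e_i=0$ is handled by approximating $a$ with controls for which the $e_i$-component of the drift is strictly positive (using continuity of $f_i,\ell_i$) or, more simply, by the trajectory staying on $\Gamma$; taking the supremum over $a\in A_i^+(x)$ then produces exactly $\lambda v_i(x)+H_i^+(x,\partial\varphi_i(x))\le0$.

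The main obstacle I anticipate is the careful justification that the $\varepsilon_n$-optimal trajectory can be taken to remain in $\cP_i$ (and off $\Gamma$) for the whole interval $[0,\tau]$, so that the cost functional along the arc involves only $\ell_i$ and no spurious entry cost $c_i$ is triggered. This is where the sign condition $a\in A_i^+(x)$ is essential and where the continuity of $v_i$ up to $\Gamma$, guaranteed by Lemma~\ref{lem:continuity} under the strong controllability assumption $[A3]$, is used to pass $v_i(y_{x,\alpha}(\tau))\to v_i(x)$ as $\tau\to0^+$. Once these technical points are in place, the identity $H_i^+(x,p)=\max_{a\in A_i^+(x)}\{-f_i(x,a)\cdot p-\ell_i(x,a)\}$ closes the argument by taking the supremum over admissible controls.
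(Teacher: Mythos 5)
Your reduction via Lemma~\ref{lem2.1} and the overall skeleton (constant control pointing into $\cP_i$, dynamic programming, Taylor expansion, supremum over $A_i^+(x)$) match the paper's proof, but there is a genuine error at the heart of your argument: the claim that a trajectory starting at $x\in\Gamma$ and immediately entering $\cP_i\setminus\Gamma$ pays no entry cost. Under the paper's cost functional, the entry times $t_{ik}$ are the left endpoints of the connected components of $T^i_{x,\alpha}=\{t\in\R^+ : y_{x,\alpha}(t)\in\cP_i\setminus\Gamma\}$; for your trajectory the first component is $(0,\eta_{i1})$, so $t_{i1}=0$ and the term $c_i(x)e^{-\lambda\cdot 0}=c_i(x)$ \emph{is} charged. (This is precisely why $\cV|_\Gamma(x)\le v_i(x)+c_i(x)$ in Lemma~\ref{lem2.1} carries the $c_i$.) Moreover, the dynamic programming principle applies to $\cV$, not to $v_i$, and at $x\in\Gamma$ these differ: $v_i(x)$ is only the limit of $\cV$ from the interior of $\cP_i$, while $\cV(x)=\cV|_\Gamma(x)$. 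So the correct DPP at $x$ along your arc reads $\cV(x)\le c_i(x)+\int_0^\tau\ell_i\,e^{-\lambda s}ds+e^{-\lambda\tau}v_i\bigl(y(\tau)\bigr)$; since $c_i\ge C>0$, dividing by $\tau$ and letting $\tau\to0^+$ yields nothing, and your displayed inequality $v_i(x)\le\int_0^\tau\ell_i\,e^{-\lambda s}ds+e^{-\lambda\tau}v_i\bigl(y(\tau)\bigr)$ is unjustified as stated.

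The repair is exactly the step the paper takes and your proposal skips: run the constant control $a_i$ (with $f_i(x,a_i)\cdot e_i>0$) from \emph{interior} points $z\in B(x,r)\cap(\cP_i\setminus\Gamma)$. There $\cV(z)=v_i(z)$, and by Lipschitz continuity of $f_i(\cdot,a_i)$ the trajectory $y_z$ stays in $\cP_i\setminus\Gamma$ up to a time $\tau$ uniform in $z$, so genuinely no entry cost arises and $v_i(z)\le\int_0^\tau\ell_i\bigl(y_z(s),a_i\bigr)e^{-\lambda s}ds+e^{-\lambda\tau}v_i\bigl(y_z(\tau)\bigr)$. One then passes to the limit $z\to x$ using Gr\"onwall's inequality ($|y_z(t)-y_x(t)|\le|z-x|e^{Lt}$) together with the continuity of $v_i$ up to $\Gamma$ (Lemma~\ref{lem:continuity}), and only afterwards inserts the test function. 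A smaller point: your ``more simply, by the trajectory staying on $\Gamma$'' treatment of the case $f_i(x,a)\cdot e_i=0$ does not work --- the paper explicitly warns that such a control may satisfy $f_i(z,a)\cdot e_i<0$ at all nearby interior points $z$ --- this boundary case is instead absorbed at the very end through the identity $\sup_{a:\,f_i(x,a)\cdot e_i>0}\{\cdots\}=\max_{a:\,f_i(x,a)\cdot e_i\ge0}\{\cdots\}$, which relies on the structure assumptions $[A]$ (your approximation remark is essentially this argument, and it is the viable one of your two suggestions).
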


\begin{proof}
 Let $x\in\Gamma$. From Lemma~\ref{lem2.1} we have $v_i(x)\le \cV|_\Gamma (x)$. Hence, it suffices to prove that 
\[
\lambda v_i (x) + H^+_i \left( x, \partial v_i (x) \right) \le 0,
\]
in the viscosity sense.
Let $a_{i} \in A_i$ be such that $f_{i}\left(x,a_i\right) \cdot e_i >0$. 
By the Lipschitz continuity of $ f_i(\cdot,a_i) $, there exist $ r >0 $ such that $f_{i}\left(z,a_i\right) \cdot e_i >0$ for all $ z\in B(x,r) \cap (\cP_i \backslash \Gamma) $. 
Thus, there exists $ \tau >0$ such that for all $ z\in B(x,r) \cap (\cP_i \backslash \Gamma) $, there exists  $ (y_z, \alpha_z) \in \cT_z$ for which
\[ 
\alpha_z (t) =
\begin{cases}
a_i & 0\le t \le \tau,\\
\hat{\alpha} (t-\tau) & t \ge \tau,
\end{cases}
\]
where $ \hat{\alpha} $ is chosen arbitrarily. It follows that $y_{z}\left(t\right)\in \cP_{i}\backslash \Gamma $ for all $ t\le \tau $.
In other words, the trajectory $ y_z $ cannot approach $\Gamma$ since the speed pushes it away
from $\Gamma$, for $y_{z} (t)\in \cP_{i}\cap B\left(\Gamma,r\right)$.
Note that it is not sufficient to choose $a_{i}\in A_{i}$ such that $f_i\left(x,a_{i}\right) \cdot e_i=0$ since it may lead to $f\left(z,a_{i}\right)\cdot e_i <0$ for all $z\in \cP_{i}\backslash \Gamma $.
Next, since $y_{z}\left(t\right)\in \cP_{i}\backslash \Gamma $ for all $ t\le \tau $, we have
\[
v_{i}\left(z\right)  \le  J\left(z,\alpha_{z}\right)=\int_{0}^{\tau}\ell_{i}\left(y_{z}\left(s\right),a_{i}\right)e^{-\lambda s}ds+e^{-\lambda\tau}J\left(y_{z}\left(\tau\right),\hat{\alpha}\right).
\]
This inequality holds for any $\hat{\alpha}$, thus
\begin{equation}
v_{i}\left(z\right)\le\int_{0}^{\tau}\ell_{i}\left(y_{z}\left(s\right),a_{i}\right)e^{-\lambda s}ds+e^{-\lambda\tau}v_{i}\left(y_{z}\left(\tau\right)\right).\label{2ineq_sub_property 1}
\end{equation}
Furthermore, since $f_{i}\left(\cdot,a\right)$ is Lipschitz continuous by $\left[A1\right]$, for all $t\in\left[0,\tau\right]$,
\begin{eqnarray*}
\left|y_{z}\left(t\right)-y_{x}\left(t\right)\right| & = & \left|z+\int_{0}^{t}f_{i}\left(y_{z}\left(s\right),a_{i}\right)ds-x-\int_{0}^{t}f_{i}\left(y_{x}\left(s\right),a_{i}\right)ds\right|\\
 & \le & \left|z-x\right|+L\int_{0}^{t}\left|y_{z}\left(s\right)-y_{x}\left(s\right)\right|ds,
\end{eqnarray*}
and by Gr{\"o}nwall's inequality, 
\[
\left|y_{z}\left(t\right)-y_{x}\left(t\right)\right|\le\left|z-x\right|e^{Lt}, \quad \text{for all } t\in [0,\tau],
\]
yielding that $y_{z}(s)$ tends to $y_{x}(s)$ and $\int_0^\tau \ell_i(y_{z}(s),\alpha_z (s) ) ds$ tends to $\int_0^\tau \ell_i(y_{x}(s),\alpha_x (s) ) ds$
when $z$ tends to $x$. Hence, from~\eqref{2ineq_sub_property 1},
by letting $z \rightarrow x$, we obtain
\[
v_{i}\left(x\right)\le\int_{0}^{\tau}\ell_{i}\left(y_{x}\left(s\right),a_{i}\right)e^{-\lambda s}ds+e^{-\lambda\tau}v_{i}\left(y_{x}\left(\tau\right)\right).
\]
Let $\varphi$ be a function in $C^{1}\left(\cP_{i}\right)$
such that $0=v_{i}\left(x\right)-\varphi\left(x\right)=\max_{\cP_{i}}\left(v_{i}-\varphi\right)$.
This yields
\[
\dfrac{\varphi\left(x\right)-\varphi\left(y_{x}\left(\tau\right)\right)}{\tau}\le\dfrac{1}{\tau}\int_{0}^{\tau}\ell_{i}\left(y_{x}\left(s\right),a_{i}\right)e^{-\lambda s}ds+\dfrac{(e^{-\lambda\tau}-1) v_{i}\left(y_{x}\left(\tau\right)\right)}{\tau}.
\]
By letting $\tau$ tend to $0$, we obtain that
$
- f_{i}\left(x,a_{i}\right) \cdot \partial \varphi (x) \le \ell_{i}\left(x,a_{i}\right) - \lambda v_{i}\left(x\right)
$.
Hence,
\[
\lambda v_{i}\left(x\right)+\sup_{a\in A_{i}:f_{i}\left(x,a\right) \cdot e_i >0}\left\{ - f_{i}\left(x,a_{i}\right) \cdot \partial \varphi (x) - \ell_{i}\left(x,a_{i}\right)\right\} \le0.
\]
 Finally, by $ [A] $, it is easy to check that
\[
\sup_{a\in A_{i}:f_{i}\left(x,a\right) \cdot e_i >0}\left\{ - f_{i}\left(x,a_{i}\right) \cdot \partial \varphi (x) - \ell_{i}\left(x,a_{i}\right)\right\} =\max_{a\in A_{i}:f_{i}\left(x,a\right) \cdot e_i \ge0}\left\{ - f_{i}\left(x,a_{i}\right) \cdot \partial \varphi (x) - \ell_{i}\left(x,a_{i}\right)\right\} .
\]
The proof is complete.
\end{proof}

Before we give a proof of the fact that $v_i$ is a viscosity supersolution of~\eqref{eq:HJG3}, we prove the  following useful lemma.
\begin{lem}\label{lem:remain}
Let $x\in \Gamma$ and assume that
\begin{equation}\label{520}
v_i(x) < \cV|_\Gamma (x).
\end{equation}
Then, there exist $\bar{\tau}>0$ and $r>0$  such that for any $z\in (\cP_i \backslash \Gamma) \cap B(x,r) $, any $\varepsilon$ sufficiently small and any $\varepsilon$-optimal control law $\alpha^{\varepsilon}_z$ for $z$,
\[
y_{z,\alpha^\varepsilon_z} (s) \in \cP_i \backslash \Gamma, \quad \text{for all } s\in [0,\bar{\tau}].
\]
\end{lem}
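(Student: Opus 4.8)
The plan is to argue by contradiction, exploiting the strict gap $\cV|_\Gamma(x)-v_i(x)>0$ together with the dynamic programming principle. Suppose the conclusion fails. Negating the statement, one produces sequences $z_n\in\cP_i\backslash\Gamma$ with $z_n\to x$, a sequence $\varepsilon_n\to 0^+$, $\varepsilon_n$-optimal controls $\alpha_n:=\alpha^{\varepsilon_n}_{z_n}$ for $z_n$, and times $s_n\to 0^+$ such that the trajectory $y_n:=y_{z_n,\alpha_n}$ first reaches $\Gamma$ at time $s_n$; that is, $y_n(s_n)\in\Gamma$ while $y_n(s)\in\cP_i\backslash\Gamma$ for $0\le s<s_n$. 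Here I use that $s_n:=\inf\{s\ge 0:y_n(s)\in\Gamma\}$ is attained, since $\Gamma$ is closed and $y_n$ is continuous, and that a continuous trajectory issuing from $\cP_i\backslash\Gamma$ cannot reach some $\cP_j\backslash\Gamma$ ($j\neq i$) without first crossing $\Gamma$.

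The key step is to split the cost at the first hitting time $s_n$. Since $y_n$ stays in $\cP_i\backslash\Gamma$ on $[0,s_n)$, no entry cost is incurred there and the running cost equals $\ell_i$, so by the dynamic programming principle
\begin{align*}
J(z_n,\alpha_n) &= \int_0^{s_n}\ell_i\big(y_n(s),\alpha_n(s)\big)e^{-\lambda s}\,ds + e^{-\lambda s_n}J\big(y_n(s_n),\tilde\alpha_n\big)\\
&\ge \int_0^{s_n}\ell_i\big(y_n(s),\alpha_n(s)\big)e^{-\lambda s}\,ds + e^{-\lambda s_n}\cV\big(y_n(s_n)\big),
\end{align*}
where $\tilde\alpha_n$ is the control shifted by $s_n$ and the inequality uses that $\cV$ is the infimum of the cost functional. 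Because $y_n(s_n)\in\Gamma$, we have $\cV\big(y_n(s_n)\big)=\cV|_\Gamma\big(y_n(s_n)\big)$. Combining this with the $\varepsilon_n$-optimality $v_i(z_n)+\varepsilon_n=\cV(z_n)+\varepsilon_n>J(z_n,\alpha_n)$ (recall $\cV(z_n)=v_i(z_n)$ on $\cP_i\backslash\Gamma$) gives
\[
v_i(z_n)+\varepsilon_n > \int_0^{s_n}\ell_i\big(y_n(s),\alpha_n(s)\big)e^{-\lambda s}\,ds + e^{-\lambda s_n}\cV|_\Gamma\big(y_n(s_n)\big).
\]

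It remains to pass to the limit. Since $|f_i|\le M$, we have $|y_n(s_n)-z_n|\le M s_n\to 0$, so $y_n(s_n)\to x$ along $\Gamma$; since $|\ell_i|\le M$ and $s_n\to 0$, the integral term tends to $0$ and $e^{-\lambda s_n}\to 1$. Using the continuity of $v_i$ (Lemma~\ref{lem:continuity}) and the continuity of $\cV|_\Gamma$ on $\Gamma$ (which, as in Lemma~\ref{lem:continuity}, follows from the strong controllability $[A3]$ by travelling along $\Gamma$), letting $n\to\infty$ yields $v_i(x)\ge\cV|_\Gamma(x)$, contradicting the hypothesis $v_i(x)<\cV|_\Gamma(x)$. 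This contradiction proves the existence of the desired $\bar\tau$ and $r$.

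I expect the main obstacle to be the bookkeeping in the dynamic programming splitting at the first hitting time --- ensuring that cutting the trajectory exactly when it touches $\Gamma$ neither creates nor destroys an entry cost, so that the tail is correctly bounded below by $\cV|_\Gamma\big(y_n(s_n)\big)$ --- together with the appeal to continuity (indeed Lipschitz continuity) of $\cV|_\Gamma$ on $\Gamma$, which must be justified from $[A3]$ rather than assumed, since the weaker bound $\cV|_\Gamma\ge v_i$ from Lemma~\ref{lem2.1} is not sufficient to close the argument. The remaining limiting argument is routine and relies only on the uniform bounds $|f_i|,|\ell_i|\le M$.
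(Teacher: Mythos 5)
Your proof is correct and follows essentially the same route as the paper's: a contradiction argument producing sequences $z_n\to x$, $\varepsilon_n\to 0$, hitting times $s_n\to 0$, then splitting the cost of the $\varepsilon_n$-optimal trajectory at the hitting time via dynamic programming, bounding the tail below by $\cV|_\Gamma$ at the hitting point, and passing to the limit using the boundedness of $f_i,\ell_i$ and the continuity of $v_i$ and $\cV|_\Gamma$ to get $v_i(x)\ge\cV|_\Gamma(x)$. Your version is in fact slightly more careful than the paper's (first hitting time attained, no entry cost incurred before hitting $\Gamma$, explicit appeal to continuity of $\cV|_\Gamma$), but the argument is the same.
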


This lemma means that if~\eqref{520} holds, then any trajectories starting from  $z\in (\cP_i \backslash \Gamma) \cap B(x,\varepsilon)$ still remains on $\cP_i \backslash \Gamma$ for a fixed amount of time.
Hence, this lemma takes into account the situation that the trajectory does not leave $\cP_i \backslash \Gamma$.

\begin{proof}[Proof of Lemma~\ref{lem:remain}]
We proceed by contradiction. Suppose that there exist sequences of positive numbers $\{\varepsilon_n\}, \{\tau_n\}$ and $\{x_n\}\subset \cP_i \backslash \Gamma$ such that $\varepsilon\rightarrow 0^+$, $x_n\rightarrow x$, $\tau_n \rightarrow 0^+ $ and $(y_{x_n}, \alpha_n) \in \cT_{x_n} $ where $\alpha_n$, $\varepsilon_n$-optimal control law,  satisfies $y_{x_n} (\tau_n) \in \Gamma$.
This implies that
\[
v_i(x_n) + \varepsilon_n > J(x_n,\alpha_n) = \int_0^{\tau_n} \ell (y_{x_n}(s), \alpha_n (s)) e^{-\lambda s}ds + e^{-\lambda \tau_n} J(y_{x_n}(\tau_n),\alpha_n (\cdot+\tau_n)).
\]
Since $\ell$ is bounded by $M$ by $[A1]$, then $v_i(x_n)+\varepsilon_n \ge -\tau_n M + e^{- \lambda \tau_n}\cV|_\Gamma (y_{x_n}(\tau_n))$.
Take a limit at infinity as $n\rightarrow \infty$, we get
$v_i(x) \ge \cV|_\Gamma (x)$ which contradicts \eqref{520}.

\end{proof}

\begin{lem}\label{lem:value-super-sol}
The function $v_i$ is a viscosity supersolution of \eqref{eq:HJG3}.
\end{lem}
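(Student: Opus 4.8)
The plan is to fix $x\in\Gamma$ and a test-function $\varphi\in C^1(\cP_i)$ such that $v_i-\varphi$ attains a local minimum at $x$, and to argue by a dichotomy on the position of $v_i(x)$ relative to $\cV|_\Gamma(x)$. Recall that Lemma~\ref{lem2.1} gives $v_i(x)\le\cV|_\Gamma(x)$. If equality holds, then $-\lambda\cV|_\Gamma(x)=-\lambda v_i(x)$, so the maximum in \eqref{eq:HJG3} is at least $-\lambda v_i(x)$ and the supersolution inequality $\lambda v_i(x)+\max\{\cdots\}\ge0$ is immediate, independently of $\varphi$. Hence the entire difficulty lies in the strict case $v_i(x)<\cV|_\Gamma(x)$, where it suffices to prove $\lambda v_i(x)+H_i^+(x,\partial\varphi(x))\ge0$.

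In this strict case I would invoke Lemma~\ref{lem:remain}, which is precisely designed for the hypothesis $v_i(x)<\cV|_\Gamma(x)$: it furnishes $\bar\tau>0$ and $r>0$ such that every $\varepsilon$-optimal trajectory issued from a point $z\in(\cP_i\setminus\Gamma)\cap B(x,r)$ stays in $\cP_i\setminus\Gamma$ on $[0,\bar\tau]$. I then pick $z_n\to x$ in $\cP_i\setminus\Gamma$ together with $\varepsilon_n$-optimal controls $\alpha_n$, $\varepsilon_n\to0$, and set $y_n:=y_{z_n,\alpha_n}$. Continuity of $v_i$ (Lemma~\ref{lem:continuity}) gives $v_i(z_n)\to v_i(x)$, while $\cV(y_n(t))=v_i(y_n(t))$ because $y_n(t)\in\cP_i\setminus\Gamma$. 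Combining the dynamic programming principle with the $\varepsilon_n$-optimality of $\alpha_n$ yields, for every $t\in(0,\bar\tau]$,
\[
v_i(z_n)+\varepsilon_n\ \ge\ \int_0^t\ell_i\bigl(y_n(s),\alpha_n(s)\bigr)e^{-\lambda s}\,ds+e^{-\lambda t}v_i\bigl(y_n(t)\bigr).
\]

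To exploit this, I would insert the test-function bound $v_i(y_n(t))\ge\varphi(y_n(t))+v_i(x)-\varphi(x)$ and use the boundedness of $f_i$ and $\ell_i$ to extract, for fixed $t$ and along subsequences in $n$, limits $\zeta_t$ of $t^{-1}(y_n(t)-z_n)$ and $\xi_t$ of $t^{-1}\int_0^t\ell_i e^{-\lambda s}\,ds$; then $y_n(t)\to x+t\zeta_t$. The key geometric observation is that $y_n(t)\cdot e_i>0$ and $z_n\cdot e_i\to0$ force $(x+t\zeta_t)\cdot e_i\ge0$, and since $x\cdot e_i=0$ this gives $\zeta_t\cdot e_i\ge0$. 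Passing to the limit $n\to\infty$, dividing by $t$, and then letting $t\to0$ along a further subsequence with $\zeta_t\to\zeta$ and $\xi_t\to\xi$ produces
\[
\lambda v_i(x)-\zeta\cdot\partial\varphi(x)-\xi\ \ge\ 0,\qquad \zeta\cdot e_i\ge0.
\]
By the convexity and closedness assumption $[A2]$, together with $y_n(s)\to x$ uniformly on $[0,t]$, the pair $(\zeta,\xi)$ lies in $\textsc{FL}_i(x)$, so $\zeta=f_i(x,a)$, $\xi=\ell_i(x,a)$ for some $a\in A_i$ with $f_i(x,a)\cdot e_i\ge0$, i.e. $a\in A_i^+(x)$. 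Consequently $H_i^+(x,\partial\varphi(x))\ge-\zeta\cdot\partial\varphi(x)-\xi\ge-\lambda v_i(x)$, which is exactly the desired inequality.

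The main obstacle I anticipate is the careful bookkeeping of the three interlocking limits $z_n\to x$, $\varepsilon_n\to0$ and $t\to0$, which calls for a diagonal extraction of convergent subsequences of the averaged velocities and running costs, and the verification that the resulting $(\zeta,\xi)$ belongs to $\textsc{FL}_i(x)$ via $[A2]$. Equally delicate, and the genuine heart of the argument, is establishing the sign condition $\zeta\cdot e_i\ge0$, which guarantees that the limiting control is admissible for $H_i^+$; this is exactly the point at which Lemma~\ref{lem:remain} (and hence the strict inequality $v_i(x)<\cV|_\Gamma(x)$) is indispensable, since it prevents the near-optimal trajectories from escaping into $\Gamma$ and thereby keeps the averaged velocity pointing into $\cP_i$.
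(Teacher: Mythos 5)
Your proposal is correct and follows essentially the same route as the paper's proof: the dichotomy $v_i(x)=\cV|_\Gamma(x)$ versus $v_i(x)<\cV|_\Gamma(x)$ via Lemma~\ref{lem2.1}, the use of Lemma~\ref{lem:remain} with $\varepsilon$-optimal trajectories from interior points $z_n\to x$, the dynamic programming inequality, the extraction of limits of averaged velocities and costs identified as an element of $\textsc{FL}_i(x)$ through $[A2]$, and the sign argument $y_n(t)\cdot e_i>0$, $z_n\cdot e_i\to 0$ forcing the limiting control into $A_i^+(x)$. The only difference is bookkeeping: you take the limit in $n$ for fixed $t$ and then let $t\to0$, whereas the paper runs a diagonal extraction over the double sequence $(\varepsilon_n,\tau_m)$ — this is not a substantive distinction.
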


\begin{proof}
Let $x\in \Gamma$. 
From Lemma~\ref{lem2.1}, we have $ v_j (x) \le \cV|_\Gamma(x) $ for all $ j $.
This yields that if the inequality \eqref{520} does not hold then $ v_i (x) = \cV|_\Gamma (x) $ and therefore $ v_i $ satisfies
\[
\lambda v_{i} (x)+\max\left\{H_{i}^{+}\left(x,\partial v_i (x) \right),-\lambda \cV|_\Gamma (x) \right\}  \ge \lambda v_i (x) - \lambda \cV|_\Gamma (x) = 0.
\]
Hence, in the rest of the proof, we assume that the inequality \eqref{520} holds and we aim to prove that
\[
\lambda v_i (x) + H^+_i \left( x, \partial v_i (x) \right) \ge 0,
\]
in the viscosity sense.
Let $\varphi\in C^{1}\left(\cP_{i}\right)$ be such that
\begin{equation}
0=v_{i}\left(x\right)-\varphi\left(x\right)\le v_{i}\left(z\right)-\varphi\left(z\right),\quad\mbox{for all } z\in \cP_{i},\label{532}
\end{equation}
and $\left\{ x_{\varepsilon}\right\} \subset \cP_{i}\backslash \Gamma $
be any sequence such that $x_{\varepsilon}$ tends to $x$ when $\varepsilon$
tends to $0$. From the dynamic programming principle and Lemma~\ref{lem:remain},
there exists $\bar{\tau}$ such that for any $\varepsilon>0$, there
exists $\left(y_{\varepsilon},\alpha_{\varepsilon}\right):=\left(y_{x_{\varepsilon}},\alpha_{\varepsilon}\right)\in\mathcal{T}_{x_{\varepsilon}}$
such that $y_{\varepsilon}\left(\tau\right)\in \cP_{i}\backslash \Gamma $
for all $\tau\in\left[0,\bar{\tau}\right]$ and 
\[
v_{i}\left(x_{\varepsilon}\right)+\varepsilon\ge\int_{0}^{\tau}\ell_{i}\left(y_{\varepsilon}\left(s\right),\alpha_{\varepsilon}\left(s\right)\right)e^{-\lambda s}ds+e^{-\lambda\tau}v_{i}\left(y_{\varepsilon}\left(\tau\right)\right).
\]
Then, according to~\eqref{532},
\begin{equation}
v_{i}\left(x_{\varepsilon}\right)-v_{i}\left(x\right)+\varepsilon \ge  \int_{0}^{\tau}\ell_{i}\left(y_{\varepsilon}\left(s\right),\alpha_{\varepsilon}\left(s\right)\right)e^{-\lambda s}ds+e^{-\lambda\tau}\left[\varphi\left(y_{\varepsilon}\left(\tau\right)\right)-\varphi\left(x\right)\right] -v_{i}\left(x\right) (1-e^{-\lambda\tau} ).\label{535}
\end{equation}
Next, one has
\[
\left\{\begin{array}{lll}
\ds \int_{0}^{\tau}\ell_{i}\left(y_{\varepsilon}\left(s\right),\alpha_{\varepsilon}\left(s\right)\right)e^{-\lambda s}ds = \ds \int_{0}^{\tau}\ell_{i}\left(y_{\varepsilon}\left(s\right),\alpha_{\varepsilon}\left(s\right)\right)ds+o\left(\tau\right),\\
 \left[\varphi\left(y_{\varepsilon}\left(\tau\right)\right)-\varphi\left(x\right)\right]e^{-\lambda\tau} =\varphi\left(y_{\varepsilon}\left(\tau\right)\right)-\varphi\left(x\right)+\tau o_{\varepsilon}\left(1\right)+o\left(\tau\right),
\end{array}
\right.
\]
and 
\[
\left\{\begin{array}{lll}
v_{i}\left(x_{\varepsilon}\right)-v_{i}\left(x\right)=o_{\varepsilon}\left(1\right),\\
 v_{i}\left(x\right) (1-e^{-\lambda\tau})=o\left(\tau\right)+\tau\lambda v_{i}\left(x\right),
\end{array}
\right.
\]
where the notation $o_{\varepsilon}\left(1\right)$ is used for a quantity
which is independent on $\tau$ and tends to $0$ as
$\varepsilon$ tends to $0$. 
For a positive integer $k$, the
notation $o(\tau^{k})$
is used for a quantity that is independent on $\varepsilon$ and such that
$o(\tau^{k})/\tau^{k}\rightarrow 0$ as $\tau\rightarrow0$.
Finally, $\mathcal{O}(\tau^k)$ stands for a quantity independent on $\varepsilon$
 such that $\mathcal{O}(\tau^{k})/\tau^{k}$ 
 remains bounded as $\tau\rightarrow0$. 
From~\eqref{535}, we obtain that
\begin{equation}
\tau\lambda v_{i}\left(x\right)\ge\int_{0}^{\tau}\ell_{i}\left(y_{\varepsilon}\left(s\right),\alpha_{\varepsilon}\left(s\right)\right)ds+\varphi\left(y_{\varepsilon}\left(\tau\right)\right)-\varphi\left(x\right)+\tau o_{\varepsilon}\left(1\right)+o\left(\tau\right)+o_{\varepsilon}\left(1\right).\label{537}
\end{equation}
Since $y_{\varepsilon}\left(\tau\right)\in \cP_{i}$ for all $\varepsilon$, we have
\[
\varphi\left(y_{\varepsilon}\left(\tau\right)\right)-\varphi\left(x_{\varepsilon}\right)=\int_{0}^{\tau} \partial \varphi\left(y_{\varepsilon}\left(s\right)\right) \cdot \dot{y}_{\varepsilon}\left(s\right)ds=\int_{0}^{\tau} \partial \varphi\left(y_{\varepsilon}\left(s\right)\right) \cdot f_{i}\left(y_{\varepsilon}\left(s\right),\alpha_{\varepsilon}\left(s\right)\right)ds.
\]
Hence, from~\eqref{537}
\begin{equation}
\begin{array}{ccc}
\tau\lambda v_{i}\left(x\right)-{\displaystyle \int_{0}^{\tau}}\left[\ell_{i}\left(y_{\varepsilon}\left(s\right),\alpha_{\varepsilon}\left(s\right)\right)+ \partial \varphi \left(y_{\varepsilon}\left(s\right)\right) \cdot f_{i}\left(y_{\varepsilon}\left(s\right),\alpha_{\varepsilon}\left(s\right)\right)\right]ds  \ge  \tau o_{\varepsilon}\left(1\right)+o\left(\tau\right)+o_{\varepsilon}\left(1\right).\end{array}\label{538}
\end{equation}
Moreover,
$\varphi\left(x_{\varepsilon}\right)-\varphi\left(x\right)=o_{\varepsilon}\left(1\right)$
and that $\partial \varphi \left(y_{\varepsilon}\left(s\right)\right)=\partial \varphi\left(x\right)+o_{\varepsilon}\left(1\right)+\mathcal{O}\left(s\right)$.
Thus 
\begin{equation}
\ds \lambda v_{i}\left(x\right)-\dfrac{1}{\tau}\int_{0}^{\tau}\left[\ell_{i}\left(y_{\varepsilon}\left(s\right),\alpha_{\varepsilon}\left(s\right)\right)+ \partial \varphi\left(x\right) \cdot f_{i}\left(y_{\varepsilon}\left(s\right),\alpha_{\varepsilon}\left(s\right)\right)\right]ds  \ge  o_{\varepsilon}\left(1\right)+\dfrac{o\left(\tau\right)}{\tau}+\dfrac{o_{\varepsilon}\left(1\right)}{\tau}.\label{539}
\end{equation}
Let $\varepsilon_{n}\rightarrow0$ as $n\rightarrow\infty$ and $\tau_{m}\rightarrow0$
as $m\rightarrow\infty$  such that 
\[
{\displaystyle \left(a_{mn},b_{mn}\right):=\left(\dfrac{1}{\tau_{m}}\int_{0}^{\tau_{m}}f_{i}\left(y_{\varepsilon_{n}}\left(s\right),\alpha_{\varepsilon_{n}}\left(s\right)\right)ds,\dfrac{1}{\tau_{m}}\int_{0}^{\tau_{m}}\ell_{i}\left(y_{\varepsilon_{n}}\left(s\right),\alpha_{\varepsilon_{n}}\left(s\right)\right)ds\right)}\longrightarrow\left(a,b\right)
\]
as $n,m\rightarrow\infty$. In view of  $\left[A1\right]$ and $\left[A2\right]$, we have
\begin{equation*}\begin{array}{lll}
\max\{ |f_i(y_{\varepsilon_n} (s) , \alpha_{\varepsilon_n} (s)) - f_i(x , \alpha_{\varepsilon_n} (s))|, |\ell_i(y_{\varepsilon_n} (s) , \alpha_{\varepsilon_n} (s)) - \ell_i(x , \alpha_{\varepsilon_n} (s))| \} \\ \qquad\qquad\qquad\qquad\qquad\qquad  \le  L | y_{\varepsilon_n} (s) - x| \le  L | y_{\varepsilon_n} (s) - x_{\varepsilon_n} | + L | x_{\varepsilon_n} - x| \\ 
 \qquad\qquad\qquad\qquad\qquad\qquad  \le LM \tau_m  + L |x_{\varepsilon_n} - x|.
\end{array}
\end{equation*}
Therefore,
\[
\begin{cases}
f_{i}\left(y_{\varepsilon_{n}}\left(s\right),\alpha_{\varepsilon_{n}}\left(s\right)\right) \cdot e_{i} = f_{i}\left(x,\alpha_{\varepsilon_{n}}\left(s\right)\right) \cdot e_{i}+o_{n}\left(1\right)+o_{m}\left(1\right),\\
\ell_{i}\left(y_{\varepsilon_{n}}\left(s\right),\alpha_{\varepsilon_{n}}\left(s\right)\right) =  \ell_{i}\left(x,\alpha_{\varepsilon_{n}}\left(s\right)\right) + o_{n}\left(1\right)+o_{m}\left(1\right).
\end{cases}
\]
Hence,
\begin{align*}
\left(a_{mn},b_{mn}\right) & ={\displaystyle \left(\dfrac{1}{\tau_{m}}\int_{0}^{\tau_{m}} f_{i}\left(x,\alpha_{\varepsilon_{n}}\left(s\right)\right)  ds ,\dfrac{1}{\tau_{m}}\int_{0}^{\tau_{m}}\ell_{i}\left(x,\alpha_{\varepsilon_{n}}\left(s\right)\right)ds\right)+o_{n}\left(1\right)+o_{m}\left(1\right)}\\
 & \in\text{FL}_{i}\left(x\right)+o_{n}\left(1\right)+o_{m}\left(1\right),
\end{align*}
since $\text{FL}_{i}\left(x\right)$ is closed and convex.
Let $n,m\rightarrow\infty$, then $\left(a,b\right)\in\text{FL}_{i}\left(x\right)$
and therefore there exists $\overline{a}\in A_{i}$ such that
\begin{align}
\lim_{m,n\rightarrow\infty}\left(\dfrac{1}{\tau_{m}}\int_{0}^{\tau_{m}}f_{i}\left(y_{\varepsilon_{n}}\left(s\right),\alpha_{\varepsilon_{n}}\left(s\right)\right) ds,\dfrac{1}{\tau_{m}}\int_{0}^{\tau_{m}}\ell_{i}\left(y_{\varepsilon_{n}}\left(s\right),\alpha_{\varepsilon_{n}}\left(s\right)\right)ds\right)=\left(f_{i}\left(x,\overline{a}\right),\ell_{i}\left(x,\overline{a}\right)\right).\label{540}
\end{align}
On the other hand, by Lemma~\ref{lem:remain}, $y_{\varepsilon_{n}}\left(s\right)\in \cP_{i}\backslash \Gamma $ for all $s\in\left[0,\tau_{m}\right]$. This yields 
\[
y_{\varepsilon_{n}}\left(\tau_{m}\right)=x_{\varepsilon_{n}}+\int_{0}^{\tau_{m}}f_{i}\left(y_{\varepsilon_{n}}\left(s\right),\alpha_{\varepsilon_{n}}\left(s\right)\right)ds.
\]
Since $y_{\varepsilon_{n}}\left(\tau_{m}\right) \cdot e_i >0$, then
\[
\dfrac{1}{\tau_{m}}\int_{0}^{\tau_{m}} f_{i}\left(y_{\varepsilon_{n}}\left(s\right),\alpha_{\varepsilon_{n}}\left(s\right)\right)  ds \cdot e_i\ge -\dfrac{x_{\varepsilon_{n}} \cdot e_i}{\tau_{m}} \ge  -\dfrac{\left|x_{\varepsilon_{n}}\right|}{\tau_{m}}.
\]
Let $\varepsilon_{n} \rightarrow 0$ then let $\tau_{m} \rightarrow 0$,  to obtain  $f_{i}\left(x,\overline{a}\right) \cdot e_i \ge0$, thus $\overline{a}\in A_{i}^{+}(x)$.
Hence, from~\eqref{539} and~\eqref{540}, replacing
$\varepsilon$ by $\varepsilon_{n}$ and $\tau$ by $\tau_{m}$, let
$\varepsilon_{n} \rightarrow 0 $, then let $\tau_{m} \rightarrow 0$,
we finally obtain
\[
\lambda v_{i}\left(x\right)+\max_{a\in A_{i}^{+}(x)}\left\{ -f_{i}\left(x,a\right) \cdot \partial \varphi \left(x\right)-\ell_{i}\left(x,a\right)\right\} \ge\lambda v_{i}\left(x\right)+\left[-f_{i}\left(x,\overline{a}\right) \cdot \partial \varphi \left(x\right)-\ell_{i}\left(x,\overline{a}\right)\right] \ge 0.
\]
\end{proof}

\subsection{A Comparison Principle and Uniqueness }\label{sec6}
In this section we establish a comparison principle for the Hamilton-Jacobi system~\eqref{eq:HJmain}.
From the comparison principle, it easily follows that $V:=(v_1,\ldots,v_N,\cV|_\Gamma)$ is the unique viscosity solution of~\eqref{eq:HJmain}.
\begin{thm}[Comparison Principle]\label{thm3}
Under Assumptions $[A]$ and $[A3]$, let $U$ and $W$ be respectively bounded continuous viscosity sub and supersolution of~\eqref{eq:HJmain}. Then $U\le W$ componentwise.
\end{thm}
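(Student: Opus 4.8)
The plan is to argue by contradiction, assuming that the global positive part
\[
M := \max\Bigl\{ \max_{i}\sup_{\cP_i}(u_i - w_i),\ \sup_{\Gamma}(u_\Gamma - w_\Gamma) \Bigr\} > 0,
\]
and to rule this out by a case analysis on where $M$ is (essentially) attained. Since $\Gamma$ and the $\cP_i$ are unbounded I would first insert a localizing penalization and use $\lambda>0$ together with the boundedness of $U,W$ to guarantee that, after a small perturbation, the relevant suprema are attained at a point of some $\cP_i$ or of $\Gamma$; this is routine and I will not dwell on it. If $M$ is attained at an interior point $\bar x\in\cP_i\setminus\Gamma$, then near $\bar x$ only the first line of \eqref{eq:HJmain} is active, and the classical doubling-of-variables argument for $\lambda u+H_i(x,\partial u)=0$ — using $\lambda>0$ and the Lipschitz dependence of $f_i,\ell_i$ on $x$ from $[A1]$ — yields the contradiction. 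Throughout I will use that, by the coercivity of $H_i$ furnished by the strong controllability $[A3]$ (see the remark after $[A3]$ and Lemma~\ref{lem:continuity-sub}), the subsolution components $u_i$ are Lipschitz near $\Gamma$, so that all adjoint variables $p_\varepsilon$ produced by doubling stay bounded.

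The heart of the matter is the case where $M$ is attained at a point $\bar x\in\Gamma$, and here the algebraic backbone is the $\max$-structure of the two interface equations in \eqref{eq:HJmain}. Reading the subsolution inequalities one extracts the \emph{simultaneous} relations $u_i(\bar x)\le u_\Gamma(\bar x)$ and $u_\Gamma(\bar x)\le\min_j\{u_j(\bar x)+c_j(\bar x)\}$, together with the viscosity inequalities $\lambda u_\Gamma+H_\Gamma\le0$ and $\lambda u_i+H_i^+\le0$; reading the supersolution inequalities one instead obtains the \emph{alternatives} ``$w_i(\bar x)\ge w_\Gamma(\bar x)$ or $\lambda w_i+H_i^+\ge0$'' and ``$w_\Gamma(\bar x)\ge\min_j\{w_j(\bar x)+c_j(\bar x)\}$ or $\lambda w_\Gamma+H_\Gamma\ge0$''. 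I would then split according to which branch of each alternative holds, noting that by continuity a strict failure of a switching inequality persists in a neighborhood, so that the corresponding $H_\Gamma$- or $H_i^+$-viscosity inequality is available not only at $\bar x$ but also at the nearby maximizers produced by doubling.

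The decisive point is that the two switching alternatives cannot both be the active ones at $\bar x$. Indeed, suppose the interface supersolution is in its switching regime, $w_\Gamma(\bar x)\ge\min_j\{w_j+c_j\}(\bar x)=w_{j_0}(\bar x)+c_{j_0}(\bar x)$ for some $j_0$; combining with the subsolution relation $u_\Gamma(\bar x)\le\min_j\{u_j+c_j\}(\bar x)\le u_{j_0}(\bar x)+c_{j_0}(\bar x)$ gives
\[
M=u_\Gamma(\bar x)-w_\Gamma(\bar x)\le u_{j_0}(\bar x)-w_{j_0}(\bar x)\le M,
\]
so the branch difference $u_{j_0}-w_{j_0}$ also attains $M$ at $\bar x$ and $w_\Gamma(\bar x)=w_{j_0}(\bar x)+c_{j_0}(\bar x)$. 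If moreover the branch-$j_0$ supersolution is in its switching regime, $w_{j_0}(\bar x)\ge w_\Gamma(\bar x)$, then $w_\Gamma(\bar x)\ge w_{j_0}(\bar x)+c_{j_0}(\bar x)\ge w_\Gamma(\bar x)+C$, which is absurd since $c_{j_0}\ge C>0$ by $[A1]$. Thus at least one of the two alternatives must be resolved by its Hamiltonian inequality.

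It then remains to close the two Hamiltonian-active situations by comparison. When the interface $H_\Gamma$-inequality holds — for $w_\Gamma$, and for $u_\Gamma$ which always satisfies $\lambda u_\Gamma+H_\Gamma\le0$ — I compare along the line $\Gamma\cong\R e_0$: this is a one-dimensional scalar Hamilton-Jacobi equation in the single derivative $\partial/\partial e_0$, and doubling in that variable closes the estimate in the standard way (cf. the treatment of \eqref{eq:HJG}). When instead the branch $H_{j_0}^+$-inequality holds, I compare on the half-plane $\cP_{j_0}$, with $H_{j_0}$ in the interior and $H_{j_0}^+$ on $\Gamma$; here the Lipschitz bound on $u_{j_0}$ near $\Gamma$ (again from $[A3]$) keeps the doubling gradients bounded, which is exactly what is needed to dominate $H_{j_0}^+(\bar X_\varepsilon,p_\varepsilon)-H_{j_0}^+(\bar Y_\varepsilon,p_\varepsilon)$ and reach the contradiction. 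The main obstacle is precisely this bookkeeping at the interface: organizing the four combinations of active terms so that the switching/switching case is excluded by the strict positivity $c_i\ge C>0$, while each remaining case collapses onto a scalar comparison along $\Gamma$ or a constrained comparison on a single $\cP_{j_0}$ — and ensuring, through the coercivity granted by $[A3]$, that the gradients generated by doubling never escape to infinity, so that all of these comparisons actually close.
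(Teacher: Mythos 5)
Your overall architecture coincides with the paper's first proof of Theorem~\ref{thm3}: perturb/localize so the suprema are attained, split according to whether the positive maximum is carried by a branch difference $u_i-w_i$ or by the interface difference $u_\Gamma-w_\Gamma$, play the simultaneous subsolution relations $u_i\le u_\Gamma$ and $u_\Gamma\le\min_j\{u_j+c_j\}$ against the supersolution alternatives, and exclude the double-switching configuration via $c_{j_0}\ge C>0$. Your ``decisive point'' is exactly the paper's Case~B.2 combined with its observation that Case~B.2.1 cannot occur, and this algebraic part of your argument is correct.

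The gap is in the final analytic step, the comparison on a single half-plane $\cP_{j_0}$ with $H_{j_0}$ in the interior and $H_{j_0}^+$ on $\Gamma$. This is not a routine doubling, and bounded gradients are \emph{not} ``exactly what is needed''. With a symmetric penalization $u_{j_0}(x)-w_{j_0}(y)-|x-y|^2/(2\varepsilon)-|x-\bar x|^2$, nothing prevents the configuration $X_\varepsilon\in\Gamma$, $Y_\varepsilon\in\cP_{j_0}\setminus\Gamma$. There the subsolution only provides $\lambda u_{j_0}(X_\varepsilon)+H_{j_0}^+(X_\varepsilon,p_\varepsilon)\le0$, while the supersolution provides $\lambda w_{j_0}(Y_\varepsilon)+H_{j_0}(Y_\varepsilon,p_\varepsilon)\ge0$; subtracting leaves the term $H_{j_0}(X_\varepsilon,p_\varepsilon)-H_{j_0}^+(X_\varepsilon,p_\varepsilon)\ge0$, which is in general of order one no matter how small $|X_\varepsilon-Y_\varepsilon|$ is and how bounded $p_\varepsilon$ stays (since $H_{j_0}\ge H_{j_0}^+$ is the unfavorable direction here). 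Your displayed quantity $H_{j_0}^+(\bar X_\varepsilon,p_\varepsilon)-H_{j_0}^+(\bar Y_\varepsilon,p_\varepsilon)$ only covers the case where both points lie on $\Gamma$. The paper closes precisely this case with an asymmetric, shifted penalization $\frac{1}{2\varepsilon}\left((-x^0+y^0+\delta(\varepsilon))^2+(-x^i+y^i+\delta(\varepsilon))^2\right)$ with $\delta(\varepsilon)=(L+1)\varepsilon$, where $L$ is the Lipschitz constant of the subsolution near $\Gamma$ given by Lemma~\ref{lem:continuity-sub}: the shift forces the subsolution argmax off $\Gamma$ (assuming $x_\varepsilon\in\Gamma$ produces the contradiction $L\ge L+1$), so the full $H_{j_0}$-inequality is available at $x_\varepsilon$, and the supersolution point may then lie anywhere because $H_{j_0}^+\le H_{j_0}$ now acts in the favorable direction. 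In other words, the Lipschitz regularity furnished by $[A3]$ is not used to bound $p_\varepsilon$ --- it is used to calibrate the shift that excludes the bad boundary configuration. Without this device (or an equivalent one, e.g.\ the Lions--Souganidis type test functions), your half-plane comparison does not close, and this is where the bulk of the paper's technical work lies.
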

We are going to give two proofs of Theorem~\ref{thm3}. 
The first one, given below, is inspired by Lions \& Souganidis \cite{LS2016,LS2017} by using arguments from the theory of PDE. The second one (displayed in the appendix)  is inspired by the works of Achdou, Oudet \& Tchou \cite{AOT2015} and Barles, Briani \& Chasseigne \cite{BBC2013,BBC2014} by using arguments from the theory of optimal control and PDE techniques. Both proofs make use of the following important properties of viscosity subsolutions displayed in the next lemma.

\begin{lem}\label{lem:continuity-sub}
Under  Assumptions $[A]$ and $[A3]$, let  $ U =  (u_1,\ldots,u_N,u_\Gamma)$ be a bounded continuous viscosity subsolution of~\eqref{eq:HJmain}, for any $i\in \{1,\ldots,N\}$ and $x\in\Gamma$,
the function $u_i$ is Lipschitz continuous in $B(\Gamma,r) \cap \cP_i$.
Therefore, there exists a test-function $\varphi_i \in C^1 (\cP_i)$ touching $u_i$ from above at $x$.
\end{lem}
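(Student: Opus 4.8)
The plan is to turn the strong controllability assumption $[A3]$ into \emph{coercivity} of the Hamiltonian in a one-sided strip around $\Gamma$, and then to extract the gradient bound---and hence the Lipschitz estimate up to and including $\Gamma$---from the viscosity subsolution inequalities by a cone--comparison (doubling) argument. The existence of the $C^1$ test-function is then a consequence of the Lipschitz bound.

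\emph{Step 1 (coercivity near $\Gamma$).} First I would upgrade $[A3]$ from $\Gamma$ to a neighborhood. For $x\in B(\Gamma,r)\cap\cP_i$, write $x^\Gamma$ for its orthogonal projection onto $\Gamma$, so that $|x-x^\Gamma|=\mathrm{dist}(x,\Gamma)<r$. Given $p\in\R e_0\times\R e_i$ with $p\neq0$, assumption $[A3]$ furnishes $a\in A_i$ with $f_i(x^\Gamma,a)=-\delta\,p/|p|$, and then $[A1]$ (Lipschitz continuity of $f_i$ and boundedness of $\ell_i$) gives
\[
H_i(x,p)\ge -f_i(x,a)\cdot p-\ell_i(x,a)\ge(\delta-Lr)|p|-M .
\]
Choosing $r<\delta/(2L)$ yields $H_i(x,p)\ge\tfrac{\delta}{2}|p|-M$ for all such $x$ and $p$. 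The same computation at a point $x\in\Gamma$, but restricted to $A_i^+(x)$, gives the \emph{partial} coercivity $H_i^+(x,p)\ge\delta|p|-M$ whenever $p\cdot e_i\le0$: indeed the optimal velocity $-\delta p/|p|$ then has nonnegative $e_i$-component, hence is admissible for $H_i^+$. This one point is what makes the boundary behaviour tractable.

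\emph{Step 2 (Lipschitz bound --- the main obstacle).} Set $K=\|u_i\|_\infty$ and $C_0=2(M+\lambda K)/\delta$. Fix $\xi\in B(\Gamma,r/2)\cap\cP_i$ and, for $L>0$ to be chosen, consider $\psi(y):=u_i(y)-u_i(\xi)-L|y-\xi|$ on $\overline{B(\xi,r/2)}\cap\cP_i$; the aim is $\sup\psi\le0$, i.e.\ $u_i(y)-u_i(\xi)\le L|y-\xi|$. Suppose $\sup\psi>0$ with maximizer $\bar y$. On the sphere $|\bar y-\xi|=r/2$ one has $\psi\le2K-Lr/2<0$ as soon as $L>4K/r$, so $\bar y$ is an interior maximizer with $\bar y\neq\xi$, where $y\mapsto L|y-\xi|$ is a genuine $C^1$ test-function from above. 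If $\bar y\in\cP_i\setminus\Gamma$, the interior equation and Step 1 give $\lambda u_i(\bar y)+\tfrac{\delta}{2}L-M\le0$, forcing $L\le C_0$. The delicate case is $\bar y\in\Gamma$, where only $H_i^+$ is available: the crucial observation is that the test gradient $p=L(\bar y-\xi)/|\bar y-\xi|$ satisfies $p\cdot e_i=L(\bar y^{\,i}-\xi^{\,i})/|\bar y-\xi|\le0$, because $\bar y^{\,i}=0$ while $\xi^{\,i}\ge0$; thus the partial coercivity of Step 1 applies and again yields $L\le(M+\lambda K)/\delta\le C_0$. Choosing $L>\max\{C_0,4K/r\}$ produces a contradiction, so $\sup\psi\le0$; swapping the roles of $\xi$ and $y$ and using convexity of the strip $B(\Gamma,r/2)\cap\cP_i$ gives the full Lipschitz estimate, which extends to $\Gamma$ by continuity of $u_i$.

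I expect Step 2, and specifically the boundary case, to be the crux. The plain interior doubling is routine, but its Lipschitz constant degenerates as $\xi\to\Gamma$ (the distance to $\partial\cP_i$ shrinks); the point is that escaping to $\Gamma$ from an interior reference point always happens through \emph{outward-pointing} gradients ($p\cdot e_i\le0$), which is exactly the regime where $H_i^+$ stays coercive. Securing a uniform constant up to the interface is therefore the real content of the lemma. Once $u_i$ is known to be $C_0$-Lipschitz near $x\in\Gamma$, the dominating cone $y\mapsto u_i(x)+C_0|y-x|$ lies above $u_i$ with equality at $x$, and a standard smoothing of it away from the vertex provides the admissible $\varphi_i\in C^1(\cP_i)$ touching $u_i$ from above at $x$, which is precisely the property used to activate the subsolution inequality on $\Gamma$ in the comparison principle.
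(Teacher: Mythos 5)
Your Steps 1--2 are correct and in fact supply, in full, the argument the paper only cites: the paper's own proof consists of observing that a subsolution of~\eqref{eq:HJmain} is a subsolution of the decoupled problem~\eqref{001} and then invoking \cite[Section 3.2.3]{Oudet2016_these} (after Ishii), whose content is exactly your coercivity-plus-cone-comparison scheme. Your handling of the boundary case is the right one: at a maximizer $\bar y\in\Gamma$ of $y\mapsto u_i(y)-L|y-\xi|$ the test gradient satisfies $p\cdot e_i=-L\,\xi^i/|\bar y-\xi|\le 0$, so the maximizing velocity $-\delta p/|p|$ has nonnegative $e_i$-component and is admissible for $H_i^+$, which keeps the Lipschitz constant uniform up to the interface instead of degenerating like $\mathrm{dist}(\xi,\Gamma)^{-1}$. (One cosmetic point: $[A3]$ gives the \emph{open} ball inside $F_i(x)$, but $F_i(x)$ is compact since $A_i$ is compact and $f_i$ continuous, so the closed ball is contained as well and velocities of norm exactly $\delta$ are attainable.)

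The genuine gap is in your last step. Smoothing the cone $y\mapsto u_i(x)+C_0|y-x|$ ``away from the vertex'' cannot produce a $C^1$ function touching $u_i$ from above at $x$, because the vertex \emph{is} the contact point; and Lipschitz continuity alone never guarantees such a test function. Concretely, take $u_i(y)=|y^0-x^0|$ for $y=(y^i,y^0)\in\cP_i$, which is Lipschitz: any $\varphi\in C^1(\cP_i)$ with $\varphi\ge u_i$ near $x$ and $\varphi(x)=u_i(x)$ would, upon restriction to $\Gamma$, satisfy $\frac{\partial\varphi}{\partial e_0}(x)\ge 1$ and $\frac{\partial\varphi}{\partial e_0}(x)\le -1$ simultaneously. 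Such functions can even be viscosity subsolutions of coercive equations (e.g.\ $u(y)=|y-x|$ is a subsolution of $|Du|-1\le 0$, precisely because no test function exists at the vertex), so the obstruction is not removed by the subsolution property. The Lipschitz bound controls only the \emph{normal} direction; to obtain a $C^1$ function touching from above at a point of $\Gamma$ one must additionally make $u_i$ smooth \emph{tangentially}. This is what the paper actually does when it needs such test functions (second proof of Theorem~\ref{thm3}, in the appendix): first mollify $u_i$ in the $e_0$-direction (Lemma~\ref{lem:regu}), which yields $u_i^\varepsilon$ with $u_i^\varepsilon|_\Gamma\in C^1(\Gamma)$ and which is still a subsolution up to an error $m(\varepsilon)$; only then is
\[
\psi(y)=u_i^\varepsilon(0,y^0)+C\,y^i ,
\]
with $C$ the Lipschitz constant from your Step 2, a $C^1$ function on $\cP_i$ lying above $u_i^\varepsilon$ and agreeing with it on $\Gamma$. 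So your final sentence should be replaced by this mollification construction (or dropped: the ``Therefore'' in the lemma's statement is itself loose, and the comparison proof in Section~\ref{sec6} uses only the Lipschitz bound, through the choice $\delta(\varepsilon)=(L+1)\varepsilon$ in the doubling functional).
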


\begin{proof}
	The proof of Lemma~\ref{lem:continuity-sub} is based on the fact that if $ U = (u_1,\ldots,u_N,u_\Gamma) $ is a viscosity subsolution of~\eqref{eq:HJmain}, then for any $ i\in \{1,\ldots,N\} $, $ u_i $ is a viscosity subsolution of
	\begin{equation}\label{001}
	\begin{cases}
	\lambda u_i (x) + H_i \left(x,\partial u_i (x) \right) &=0, \quad \text{if } x\in \cP_i \backslash \Gamma,\\
	\lambda u_i (x) + H^+_i \left(x,\partial u_i (x) \right) &=0, \quad \text{if } x\in \Gamma.\\
	\end{cases}
	\end{equation}
	Therefore, the proof is complete by applying the result in \cite[Section 3.2.3]{Oudet2016_these} (which is based on the proof of Ishii \cite{Ishii2013}).
 \end{proof}

\begin{proof}[A first proof of Theorem~\ref{thm3}] 
First of all, we claim that there exists a positive constant $ \bar{M} $ such that $ (\phi_1,\ldots\phi_N, \phi_\Gamma) $, where
$\phi_j: \cP_j \rightarrow \R$, $\phi_j(x)= -|x|^2 - \bar{M}$ and $\phi_\Gamma: \Gamma \rightarrow \R$, $\phi_\Gamma(z)= -|z|^2 - \bar{M}$, is a viscosity subsolution of~\eqref{eq:HJmain}.
Indeed, for any $ j \in \{1,\ldots,N\} $, since $ f_j $ and $ \ell_j $ are bounded by $ M $, one has
\[
H_j(x,p) = \max_{a\in A_j} \{-f_j(x,a) \cdot p - \ell_j (x,a)\} \le M (|p| + 1 ).
\]
Thus, using Cauchy-Schwarz inequality, there exists $ \bar{M} >0 $ such that
\[
\lambda (-|x|^2 - \bar{M}) + M (2|x| + 1 ) < 0, \quad \text{for all } x\in \cP_j.
\]
The claim for $ \phi_j $ is proved and we can prove similarly the one for $ \phi_\Gamma $.
Next, for $ 0<\mu<1 $, $ \mu $ close to $ 1 $, setting $ u_j^\mu  = \mu u_j + (1-\mu) \phi_j $ and $ u_\Gamma^\mu  = \mu u_\Gamma + (1-\mu) \phi_\Gamma $, then  $ (u_1^\mu, \ldots, u_N^\mu, u_\Gamma^\mu) $ is a viscosity subsolution of~\eqref{eq:HJmain}.  
Moreover, since $ u^\mu_j $ and $ u^\mu_\Gamma $ tend to $ -\infty $ as $ |x| $ and $ |z| $ tend to $ +\infty $ respectively, the functions $ u_j^\mu - w_j $ and $ u_\Gamma^\mu - w_\Gamma $ have maximum values $ M_j^\mu $ and $ M_\Gamma^\mu $ which are reached at some points $ \bar{x}_j $ and $ \bar{x}_\Gamma $ respectively. We argue by contradiction, through considering the two following cases: 

\begin{enumerate}
	\item [Case A] Assume that $ M_i^\mu > M^\mu_\Gamma $ and $ M_i^\mu>0 $. 
	Since $ (u_1^\mu, \ldots, u_N^\mu, u_\Gamma^\mu) $ is a viscosity subsolution of~\eqref{eq:HJmain}, by Lemma~\ref{lem:continuity-sub},
	there exists a positive number $L$ such that  $u^\mu_i$ is Lipschitz continuous with Lipschitz constant $L$ in $\cP_{i}\cap B(\bar{x}_i,r)$.
	We consider the function $ \Psi_{i,\varepsilon}:\cP_{i}\times \cP_{i} \rightarrow \R $ which is defined by
	\begin{equation}\label{Psi}
	\Psi_{i,\varepsilon}(x,y)  := u_i^\mu\left(x\right)-w_{i}\left(y\right)-\dfrac{1}{2\varepsilon} \left( ( -x^0+y^0+\delta\left(\varepsilon\right) )^{2}
	+ ( -x^i + y^i + \delta\left(\varepsilon\right) )^{2} \right)
	-|x-\bar{x}_i|^2,
	\end{equation}
	where $ \varepsilon >0 $, $\delta (\varepsilon ) :=\left(L+1\right)\varepsilon$ and $x=(x^i,x^0),y=(y^i,y^0)\in \R e_i \times \R e_0$. 
	It is clear that  $\Psi_{i,\varepsilon}$
	attains its maximum $M_{\varepsilon,\gamma}$ at $\left(x_{\varepsilon},y_{\varepsilon}\right)\in \cP_{i}\times \cP_{i}$.
	We claim that 
	\begin{equation}\label{601}
	\begin{cases}
	\ds M_{\varepsilon} \rightarrow \max_{\cP_i} \{u^\mu_i - w_i\} = u^\mu_i (\bar{x}_i) - w_i (\bar{x}_i) ,\\
	x_\varepsilon, y_\varepsilon \rightarrow \bar{x}_i \text{ and }
	\dfrac{\left(x_{\varepsilon}-y_{\varepsilon}\right)^2}{\varepsilon} \rightarrow 0,
	\end{cases}  \text{as } \varepsilon\rightarrow0.
	\end{equation}
	Indeed, we have
	\begin{eqnarray}\notag
	M_{\varepsilon} &= & u_{i}^\mu \left(x_{\varepsilon}\right)-w_{i}\left(y_{\varepsilon}\right)-\dfrac{\left(-x_{\varepsilon}^0 + y_{\varepsilon}^0 + \delta\left(\varepsilon\right)\right)^2 + \left(- x_{\varepsilon}^i + y_{\varepsilon}^i + \delta\left(\varepsilon\right)\right)^2}{2\varepsilon} - |x_\varepsilon - \bar{x}_i|^2\\
	&\ge & u_{i}^\mu\left(\bar{x}_i\right)-w_{i}\left(\bar{x}_i\right) -\dfrac{\left(L+1\right)^{2}}{2}\varepsilon.\label{610}
	\end{eqnarray}
	Since $M_i^\mu = u_{i}^\mu (\bar{x}_i)-v_{i} (\bar{x}_i )>0$, $ M_\varepsilon $ is
	positive when $\varepsilon$ is small enough.
	Furthermore, since $ w_i $ is bounded and $ u_i^\mu $ is bounded from above, we have $ |x_\varepsilon- \bar{x}_i|^2$ is bounded and $ x_\varepsilon - y_\varepsilon \rightarrow 0  $ as $ \varepsilon \rightarrow 0 $. Hence, after extraction of a subsequence,  $x_{\varepsilon},y_{\varepsilon}\rightarrow \bar{x} \in \cP_i$
	as $\varepsilon\rightarrow0$, for some  $\bar{x} \in \cP_{i}$. 
	Thus, from~\eqref{610} we obtain
	\begin{equation*}
		M_i^\mu \ge  u_i (\bar{x}) - w_i (\bar{x}) - |\bar{x} - \bar{x}_i|^2
		\ge 
		\limsup_{\varepsilon \rightarrow 0} M_{\varepsilon} \ge \liminf_{\varepsilon \rightarrow 0} M_{\varepsilon} \ge M_i^\mu.
	\end{equation*}
	This implies that $ M_\varepsilon \rightarrow M_i^\mu$, $ (x_\varepsilon - y_\varepsilon)^2 / \varepsilon \rightarrow 0 $ as $ \varepsilon \rightarrow 0 $ and $ \bar{x} = \bar{x}_i $.
	The claim is proved.
	
	From now on in this proof we only consider the case when $\bar{x}_i \in \Gamma $, since otherwise, the proof follows by applying the classical theory (see \cite{BD1997, Barles1994}). 
	We claim that $x_{\varepsilon} \notin \Gamma$ for $ \varepsilon$ small enough.
	Indeed, assume by contradiction that $x_{\varepsilon} \in \Gamma$, i.e. $ x_{\varepsilon}^i = 0 $, we have
	\begin{enumerate}
		\item If $y_{\varepsilon} \notin \Gamma$, then let  $ z_{\varepsilon} = (y^i_{\varepsilon},x^0_{\varepsilon}) $, we have
		\begin{align*}
M_{\varepsilon} &= u_{i}^\mu (x_{\varepsilon} )-w_{i} (y_{\varepsilon} )-\dfrac{\left(-|x_{\varepsilon}^0|+|y_{\varepsilon}^0|+\delta\left(\varepsilon\right)\right)^2 + \left(|y_{\varepsilon}^i|+\delta\left(\varepsilon\right)\right)^2 }{2\varepsilon}
-|x_{\varepsilon} - \bar{x}_i|^2\\
&\ge  u_{i}^\mu (z_{\varepsilon} )-w_{i} (y_{\varepsilon})-\dfrac{\left(-|x_{\varepsilon}^0|+|y_{\varepsilon}^0|+\delta\left(\varepsilon\right)\right)^2 + \left(-|y_{\varepsilon}^i|+|y_{\varepsilon}^i|+\delta\left(\varepsilon\right)\right)^2 }{2\varepsilon} - |z_\varepsilon - \bar{x}_i|^2.
		\end{align*}
		Since $u_i^\mu$ is Lipschitz continuous in $B\left(\bar{x}_i,r\right)\cap \cP_{i}$,  we see that for $\varepsilon$ small enough
		\begin{align*}
L|y^i_{\varepsilon}|\ge u_{i}\left(x_{\varepsilon}\right)-u_{i}\left(z_{\varepsilon}\right) & \ge\dfrac{|y^i_{\varepsilon}|^{2}}{2\varepsilon}+\dfrac{|y^i_{\varepsilon}|\delta\left(\varepsilon\right)}{\varepsilon}   + |x_{\varepsilon} - \bar{x}_i|^2  - |z_\varepsilon - \bar{x}_i|^2\\ & \ge          \dfrac{|y_{\varepsilon}^i|\delta\left(\varepsilon\right)}{\varepsilon} - |y^i_{\varepsilon}| | x_\varepsilon+ z_\varepsilon - 2\bar{x}_i|.
		\end{align*}
		Therefore, if $y^i_{\varepsilon,\gamma}\not=0$, then $L\ge L + 1 - | x_\varepsilon+ z_\varepsilon - 2\bar{x}_i| $, which
		leads to a contradiction, since $ x_\varepsilon $, $ z_\varepsilon $ tend to $ \bar{x}_i $ as $ \varepsilon \rightarrow 0 $.
		\item If $y_{\varepsilon }\in \Gamma$, i.e. $y_{\varepsilon }^i = 0$, then let $ z_{\varepsilon } = (\varepsilon,x_{\varepsilon }^0) $, we have
		\begin{align*}
		M_{\varepsilon } &= u_{i}\left(x_{\varepsilon }\right)-w_{i}\left(y_{\varepsilon }\right)-\dfrac{\left(-|x_{\varepsilon }^0|+|y_{\varepsilon }^0|+\delta\left(\varepsilon\right)\right)^2 + \delta\left(\varepsilon\right)^2 }{2\varepsilon} - |x_\varepsilon - \bar{x}_i|^2 \\
		&\ge  u_{i}\left(z_{\varepsilon }\right) - w_{i}\left(y_{\varepsilon }\right) - \dfrac{\left(-|x_{\varepsilon }^0|+|y_{\varepsilon }^0|+\delta\left(\varepsilon\right)\right)^2+\left(-|\varepsilon|+\delta\left(\varepsilon\right)\right)^2  }{2\varepsilon} - |z_\varepsilon - \bar{x}_i|^2 .
		\end{align*}
		Since $u^\mu_i$ is Lipschitz continuous in $B\left(\bar{x}_i,r\right)\cap \cP_{i}$, we obtain
		\[
		L\varepsilon\ge u_i^\mu (x_{\varepsilon })- u^\mu_{i} (z_{\varepsilon }) 
		\ge - \dfrac{\varepsilon}{2} + \delta(\varepsilon)-  \varepsilon | x_\varepsilon+ z_\varepsilon - 2\bar{x}_i|.
		\]
		This implies that $L\ge L + 1/2 - | x_\varepsilon+ z_\varepsilon - 2\bar{x}_i|$,
		which yields a contradiction since  $ x_\varepsilon $, $ z_\varepsilon $ tend to $ \bar{x}_i $ as $ \varepsilon \rightarrow 0 $.     
	\end{enumerate}
	The second claim is proved. 
	We consider the following three possible cases
	\begin{enumerate}
		\item [Case A.1] There exists a subsequence of $ \{y_{\varepsilon } \} $ (still denoted by $ \{y_{\varepsilon } \} $) such that $ y_{\varepsilon } \in \Gamma  $  and $ w_i (y_{\varepsilon }) \ge w_\Gamma (y_{\varepsilon })$.
		Since $ x_\varepsilon,y_\varepsilon \rightarrow \bar{x}_i $ as $ \varepsilon \rightarrow 0 $ and $ u_i^\mu $ is continuous, for $ \varepsilon $ small enough, we have
		\begin{equation}\label{611}
		w_\Gamma (y_\varepsilon) \le w_i (y_\varepsilon) < u_i^\mu (y_\varepsilon)  = u_i^\mu (x_\varepsilon) +o_\varepsilon(1)   \le u_\Gamma^\mu(y_\varepsilon).
		\end{equation}
		Recall that the second inequality of~\eqref{611} holds since $ M_i^\mu >0 $ and the last inequality of~\eqref{611} holds since $ u^\mu_i $ and $ u^\mu_\Gamma $ satisfy
		\begin{equation}\label{613}
		\lambda u^\mu_{i} (x)+\max\left\{H_{i}^{+}\left(x,\partial u^\mu_i (x) \right), -\lambda u^\mu_\Gamma (x) \right\} \le 0, 
		\end{equation}
		in the viscosity sense.
		From \eqref{611},
		\[
		u_i^\mu (x_\varepsilon)  - w_i (y_\varepsilon) + o_\varepsilon (1) \le u_\Gamma^\mu (y_\varepsilon) - w_\Gamma (y_\varepsilon) \le M_\Gamma^\mu.  
		\]
		Let $ \varepsilon \rightarrow 0 $, thanks to~\eqref{601}, $  \sup\{u_i^\mu-w_i\} = M_i^\mu \le M_\Gamma^\mu $, which leads us to a contradiction since we assume that $ M_i^\mu > M_\Gamma^\mu $ in Case A.
		
		\item [Case A.2] There exists a subsequence of  $ \{y_{\varepsilon } \} $ such that $ y_{\varepsilon } \in \Gamma  $  and
		\[ 	\lambda w_{i}\left(y_{\varepsilon }\right)+H^+_{i}\left(y_{\varepsilon },\dfrac{-x_{\varepsilon }+y_{\varepsilon }+\delta (\varepsilon)}{\varepsilon} \right)  \ge0.  \]
		On the one hand, since $ H_i^+(x,p)\le H_i (x,p) $ for all $ x\in \Gamma $ and $ p\in \R e_i \times \R e_0 $, we have
		\begin{equation}\label{602}
		\lambda w_{i}\left(y_{\varepsilon }\right)+H_{i}\left(y_{\varepsilon },\dfrac{-x_{\varepsilon }+y_{\varepsilon }+\delta\left(\varepsilon\right)}{\varepsilon } \right)  \ge0.
		\end{equation}
		On the other hand, we have a viscosity inequality for $ u^\mu_i $ at $ x_{\varepsilon} \in \cP_i \backslash \Gamma $:
		\begin{equation}\label{603}
		\lambda u_{i}^\mu \left(x_{\varepsilon }\right)+H_{i}\left(x_{\varepsilon },\dfrac{-x_{\varepsilon }+y_{\varepsilon } + \delta\left(\varepsilon \right)}{\varepsilon } + 2 (x_{\varepsilon } - \bar{x}_i)\right)  \le0.
		\end{equation}
		Subtracting \eqref{602} from \eqref{603}, we obtain
		\begin{align*}
		& \lambda (u^\mu_{i}\left(x_{\varepsilon }\right)-w_{i}\left(y_{\varepsilon }\right)) \\
		& \le  H_{i}\left(y_{\varepsilon },\dfrac{-x_{\varepsilon }+y_{\varepsilon }+\delta\left(\varepsilon\right)}{\varepsilon} \right)-H_{i}\left(x_{\varepsilon },\dfrac{-x_{\varepsilon }+y_{\varepsilon }+\delta\left(\varepsilon  \right)}{\varepsilon} + 2 (x_\varepsilon - \bar{x}_i) \right).
		\end{align*}
		In view of \textsc{[A1]} and \textsc{[A2]}, there exists $C_{i}>0$
		such that for any $x,y\in \cP_{i}$ and $p,q\in\mathbb{R}$
		\begin{align*}
		\left|H_{i}\left(x,p\right)-H_{i}\left(y,q\right)\right| & \le\left|H_{i}\left(x,p\right)-H_{i}\left(y,p\right)\right|+\left|H_{i}\left(y,p\right)-H_{i}\left(y,q\right)\right|\\
		& \le C_i \left|x-y\right|\left(1+\left|p\right|\right)+ C_i \left|p-q\right|.
		\end{align*}
		This, in turn, yields 
		\begin{equation*}
		\lambda (u^\mu_{i}\left(x_{\varepsilon  }\right)-w_{i}\left(y_{\varepsilon     }\right))   \le C_i \left[\left|x_{\varepsilon    }-y_{\varepsilon     }\right| (1 + \dfrac{\left|-x_{\varepsilon  } + y_{\varepsilon     }+\delta (\varepsilon)   \right|}{\varepsilon  }  + 2 |x_\varepsilon - \bar{x}_i| )  + 2 |x_\varepsilon - \bar{x}_i |\right].
		\end{equation*}
		Letting $\varepsilon$ to $\infty$ and applying~\eqref{601}, we
		obtain that $\max_{\cP_i} \{u^\mu_{i}-w_{i}\} = M_i^\mu \le 0$, which leads to a contradiction.
		\item [Case A.3] There exists a subsequence of  $ \{y_\varepsilon \} $ such that $ y_{\varepsilon} \in \cP_i \backslash \Gamma $. Since the inequalities \eqref{602} and \eqref{603} still hold, a contradiction is obtained by using the similar arguments as in the previous case.
	\end{enumerate}
	
	\item [Case B] Assume that 
	\begin{equation}\label{Case B}
	M_\Gamma^\mu \ge \max_j \{ M_j^\mu\} \text{ and } M_\Gamma^\mu>0 .
	\end{equation}
	We consider the function
	\begin{align*}
	\Phi_{\varepsilon}:\Gamma \times \Gamma  & \longrightarrow \R\\
	\left(\zeta,\xi\right) & \longrightarrow u_\Gamma^\mu  (\zeta) - w_\Gamma  (\xi) - \dfrac{|\zeta - \xi |^2}{2\varepsilon} - |\zeta - \bar{x}_\Gamma|^2.
	\end{align*}
	By classical arguments, $\Phi_{\varepsilon}$
	attains its maximum $K_{\varepsilon}$ at $\left(\zeta_{\varepsilon}, \xi_{\varepsilon}\right)\in \Gamma \times \Gamma $ and
	\begin{equation}\label{612}
	\begin{cases}
	\ds K_{\varepsilon} \rightarrow M_\Gamma^\mu = \max_\Gamma \{u^\mu_\Gamma - w_\Gamma\} = u^\mu_\Gamma (\bar{x}_\Gamma) - w_\Gamma (\bar{x}_\Gamma) ,\\
	\zeta_\varepsilon, \xi_\varepsilon \rightarrow \bar{x}_\Gamma \text{ and }
	\dfrac{\left(\zeta_{\varepsilon} - \xi_{\varepsilon}\right)^2}{\varepsilon} \rightarrow 0,
	\end{cases}  \text{as } \varepsilon\rightarrow0.
	\end{equation}
	We consider the following two cases:
	\begin{enumerate}
		\item [Case B.1] There exists a subsequence $ \{\xi_\varepsilon\} $ (still denoted by $ \{\xi_\varepsilon \} $) such that
		\[
		\lambda w_\Gamma (\xi_\varepsilon) + H_\Gamma \left(\xi_\varepsilon, \dfrac{\zeta_\varepsilon - \xi_\varepsilon}{2 \varepsilon} \right) \ge 0.
		\]
		We also have a viscosity inequality for $ u_\Gamma^\mu $ at $ \zeta_\varepsilon $
		\[ 
		\lambda u_\Gamma^\mu (\zeta_\varepsilon) + H_\Gamma \left(\zeta_\varepsilon, \dfrac{\zeta_\varepsilon - \xi_\varepsilon}{2 \varepsilon} + 2(\zeta_\varepsilon - \bar{x}_\Gamma) \right) \le 0.
		\]
		By applying the classical arguments, one has
		$ (\zeta_\varepsilon - \xi_\varepsilon)^2 / \varepsilon \rightarrow 0 $ and $ \zeta_\varepsilon, \xi_\varepsilon \rightarrow \bar{x}_\Gamma$ as $ \varepsilon \rightarrow 0 $.  
		Subtracting the two above inequalities and sending $ \varepsilon $ to $ 0 $, we obtain that $  u_\Gamma^\mu (\bar{x}_\Gamma) - w_\Gamma (\bar{x}_\Gamma) = M_\Gamma^\mu \le 0$, which contradicts~\eqref{Case B}.
		
		\item [Case B.2] There exist a subsequence $ \{\xi_\varepsilon\} $ and $ k\in \{1,\ldots,N\} $ such that
		\begin{equation}\label{CaseB-3}
		w_\Gamma (\xi_\varepsilon) \ge \min_j \{ w_j(\xi_\varepsilon) + c_j (\xi_\varepsilon)   \} = w_k(\xi_\varepsilon) + c_k (\xi_\varepsilon).
		\end{equation}
		Since $ \zeta_\varepsilon,\xi_\varepsilon \rightarrow \bar{x}_\Gamma $ as $ \varepsilon \rightarrow 0 $ (by \eqref{612}) and $ u_\Gamma^\mu $ is continuous, for $ \varepsilon $ small enough, we have
		\[ 
		w_k (\xi_\varepsilon) + c_k (\xi_\varepsilon) \le w_\Gamma (\xi_\varepsilon)  < u_\Gamma^\mu (\xi_\varepsilon) = u_\Gamma^\mu (\zeta_\varepsilon) + o_\varepsilon(1) \le u_k^\mu(\xi_\varepsilon) + c_k (\xi_\varepsilon).
		\]
		Recall that the second inequality holds since $ M_\Gamma^\mu >0 $ and the last one comes from the fact that $ u_\Gamma^\mu $ satisfies
		\[
		\lambda u_{\Gamma}^\mu (x) + \max \left\{ -\lambda \min_{i=1,\ldots,N} \{u^\mu_i (x) + c_i(x) \} , H_\Gamma \left( x,\dfrac{\partial u_\Gamma^\mu}{\partial e_0}  (x) \right) \right\} \ge 0,
		\]
		in the viscosity sense. 
		This implies that  
		\begin{equation}\label{CaseB-2}
		u_\Gamma^\mu (\zeta_\varepsilon) - w_\Gamma (\xi_\varepsilon) + o_\varepsilon (1) \le u_k^\mu (\xi_\varepsilon)  - w_k (\xi_\varepsilon)  \le M_k^\mu.  
		\end{equation}
		Letting $ \varepsilon \rightarrow 0 $ and applying~\eqref{612}, we get $ M_\Gamma^\mu \le u_k^\mu(\bar{x}_\Gamma) - w_k (\bar{x}_\Gamma) \le M_k^\mu $. 
		This implies from~\eqref{Case B} that
		\begin{equation}\label{CaseB-1}
		M_\Gamma^\mu = M_k^\mu =u_k^\mu (\bar{x}_\Gamma) -w_k (\bar{x}_\Gamma)>0.
		\end{equation}
		Now we apply similar arguments as in the proof of Case A by considering the function $ \Psi_{k,\varepsilon}:\cP_{k}\times \cP_{k} \rightarrow \R $ which is defined in~\eqref{Psi} with the index $ i $ replacing the index $ k $.
		Remark that we may assume $ \bar{x}_k = \bar{x}_\Gamma $ since $ \bar{x}_\Gamma $ is a maximum point of $ u_k^\mu - w_k $ by~\eqref{CaseB-1}.
		It follows that the three cases B.2.1, B.2.2 and B.2.3 which are similar to A.1, A.2 and A.3, respectively.
		If Case B.2.1 occurs, i.e. there exists a subsequence of $ \{y_\varepsilon\} $ (still denoted by $ \{y_\varepsilon\} $) such that $ w_k(y_\varepsilon) \ge w_\Gamma (y_\varepsilon)$. 
		Letting $ \varepsilon \rightarrow 0 $, one gets $ w_k (\bar{x}_\Gamma) \ge w_\Gamma (\bar{x}_\Gamma)$. In the other hands, letting $ \varepsilon \rightarrow 0 $ in~\eqref{CaseB-3}, one also gets $ w_\Gamma (\bar{x}_\Gamma) \ge w_k(\bar{x}_\Gamma) + c_k (\bar{x}_\Gamma) > w_k(\bar{x}_\Gamma)$ which leads to a contradiction.
		Finally, the two last cases are proved by using the same arguments as in the proofs of Case A.2 and Case A.3.
%
	\end{enumerate}
\end{enumerate}

Finally, we get $ M_\Gamma^\mu \le 0 $ and $ M_i^\mu \le 0$ for all $ i\in \{1,\ldots,N\} $ and $ \mu \in (0,1) $, $ \mu $ close to $ 1 $. We conclude by letting $ \mu  $ tend to $ 1 $.
\end{proof}

\begin{cor}
	The value function $\cV$ has the property that the vector $V=(v_1,\ldots,v_N, \cV|_\Gamma) $ is the unique viscosity solution of~\eqref{eq:HJmain}. 
\end{cor}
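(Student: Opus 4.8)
The plan is to read this corollary purely as a uniqueness statement, since existence has already been settled: the theorem immediately preceding it asserts that $V=(v_1,\ldots,v_N,\cV|_\Gamma)$ is a viscosity solution of~\eqref{eq:HJmain}. Hence the only thing left to extract is that no other bounded continuous solution can coexist with $V$, and this will come from a routine two-sided application of the comparison principle of Theorem~\ref{thm3}. So the whole proof amounts to (i) checking that $V$ lies in the class of functions to which Theorem~\ref{thm3} applies, and (ii) sandwiching any competing solution between $V$ and itself.

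First I would verify that $V$ is a \emph{bounded continuous} solution, so that Theorem~\ref{thm3} is genuinely applicable to it. Continuity of each component is already in hand: each $v_i$ is continuous on $\cP_i$ (including its continuous extension to $\Gamma$) by Lemma~\ref{lem:continuity}, and $\cV|_\Gamma$ is continuous on $\Gamma$ by Theorem~\ref{thm1}. For boundedness I would invoke the standing assumptions: the lower bound $\cV\ge -M/\lambda$ follows at once from $\ell\ge -M$ together with the nonnegativity of the entry costs, while the matching upper bound follows from the usual discounted-cost estimate under $[A1]$ with $\lambda>0$ (one may always compare against an admissible trajectory incurring only finitely many switches). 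Consequently $V$ is a bounded continuous viscosity solution in the sense of Definition~\ref{def:vis-sol}.

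The uniqueness argument is then the standard sandwich. Let $\tilde V=(\tilde v_1,\ldots,\tilde v_N,\tilde v_\Gamma)$ be any bounded continuous viscosity solution of~\eqref{eq:HJmain}. Regarding $V$ as a subsolution and $\tilde V$ as a supersolution, Theorem~\ref{thm3} gives $V\le \tilde V$ componentwise; exchanging the two roles, so that $\tilde V$ is the subsolution and $V$ the supersolution, gives $\tilde V\le V$ componentwise. Combining the two inequalities forces $V=\tilde V$, which together with the already-established existence yields that $V$ is the unique bounded continuous viscosity solution of~\eqref{eq:HJmain}.

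I do not expect a serious obstacle here: the substantive analytic content has already been absorbed into Theorem~\ref{thm3} (and into the Lipschitz estimate of Lemma~\ref{lem:continuity-sub} that powers it), so the corollary is essentially immediate. The only points demanding a little care are the bookkeeping needed to confirm that $V$ is bounded and continuous, so that the comparison principle truly applies to it, and the discipline of invoking Theorem~\ref{thm3} in \emph{both} directions with the correct assignment of sub- and supersolution in each.
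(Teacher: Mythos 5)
Your proposal is correct and follows exactly the route the paper intends: the corollary is stated without proof because, as the paper notes at the start of Section~\ref{sec6}, existence comes from the preceding theorem and uniqueness follows ``easily'' from Theorem~\ref{thm3}, which is precisely your two-sided sandwich argument. Your extra bookkeeping (continuity via Lemma~\ref{lem:continuity} and Theorem~\ref{thm1}, boundedness via $|\ell|\le M$, $\lambda>0$, and a switch-free admissible trajectory) is the right verification that $V$ lies in the class where the comparison principle applies.
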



\section{Hamilton-Jacobi system under a moderate controllability condition near the interface}\label{sec8}
In this section we derive the Hamilton-Jacobi system (HJ) associated with the above optimal control problem and prove that the value function given by \eqref{eq:value-f} is the unique viscosity solution of that (HJ) system, under the condition $[\tilde{A}3]$ below, which is weaker that the strong controllability condition $[A3]$ used above. 
\begin{description}
	\item {$ [\tilde{A}3]$ (\textbf{Moderate controllability})} There exist positive numbers $\delta$ and $ R$ such that
	\begin{itemize}
		\item  for any $ i=1,\ldots,N $ and for $ x\in \Gamma $
		\begin{equation}\label{eq:750}
		[-\delta, \delta] \subset \{ f_i(x,a) \cdot e_i : a\in A_i \}.
		\end{equation}

		\item for any $ x\in \Gamma $, there exists $ j\in \{1,\ldots,N\} $ such that
		\begin{equation}\label{eq:751}
		 [-\delta, \delta] \subset \{ f_j(x,a) \cdot e_0 : a\in A_j^{\Gamma} \}. 
		\end{equation}
	\end{itemize}
\end{description}

\begin{rem}
$
\quad
$
\begin{enumerate}
\item $[\tilde{A}3] $ allows us to construct an admissible control law and a corresponding trajectory that goes from one point on $ B(\Gamma,R)\cap \cP_i $  to another one on $ \Gamma $, see Lemma~\ref{lem:control} below. 

\item Assumption $ [\tilde{A}3] $ is rather stronger than the assumption~\eqref{eq:750} called "normal controllability" (and denoted $ [\tilde{H}3] $) which is used in~\cite{Oudet2016_these} related to the case without entry costs.
 With only $ [\tilde{H}3] $ and the effect of the entry costs, we could not 
prove that our value function is continuous on $ \Gamma $ and establish a corresponding Hamilton-Jacobi system. 
The moderate assumption $[\tilde{A}3]$ allows us to overcome the difficulties induced by the entry costs.
\end{enumerate}
\end{rem}

\begin{rem}\label{rem:R}
Since for any $ i \in \{1,\ldots,N\} $, $f_i$ is Lipschitz continuous with respect to the state variable (from Assumption $ [A1] $), it is easy to check that there exists a positive number $ R $ such that for all $ i\in \{1,\ldots,N\} $ and $ x\in B(\Gamma,R) \cap \cP_i  $
\[ 
[-\dfrac{\delta}{2},\dfrac{\delta}{2}] \subset \{ f_i(x,a) \cdot e_i : a\in A_i \}.
\]
\end{rem}

Under Assumptions~$ [A]$ and $ [\tilde{A}3] $, following the arguments used in \cite[Theorem 3.3.1]{Oudet2016_these}, it holds  that for any $ x\in \cS $, the set $ Y_x $ is not empty. Hence, we can define the set of admissible controlled trajectories starting from the initial datum $ x $ to be
\[
\cT_x=\left\{ \left(y_{x},\alpha\right)\in L_{loc}^{\infty}\left(\mathbb{R}^{+};\mathcal{M}\right):y_{x}\in Lip\left(\mathbb{R}^{+};\cS\right)\mbox{ and }y_{x}\left(t\right)=
x+\int_{0}^{t}f\left(y_{x}\left(s\right),\alpha\left(s\right)\right)ds\right\} .
\]
The cost functional $ \cJ $ associated to the trajectory $ (y_{x} , \alpha) \in \cT_x$ is defined by
\[
\cJ(x,\alpha)=\int_0^\infty \ell(y_{x} (t),\alpha (t)) e^{-\lambda t} dt + \sum^N_{i=1} \sum_{k\in K_i} c_i (x_{ik}) e^{-\lambda t_{ik}}, 
\]
and
\[ 
\ccV(x) = \inf_{(y_{x},\alpha) \in \cT_x} \cJ(x,\alpha).
\]

Compared to the proof of Lemma \ref{lem:continuity}, we cannot use the classical control theory arguments to prove that the value function $ \ccV $ is continuous on $ \cP_i \backslash \Gamma$ for all $ i\in \{1,\ldots,N\} $.
The main problem is that, unlike under Assumption $[A3]$,  with the new assumption $ [\tilde{A}3]$, for $ x,z $ close to $ \Gamma $, there is possibly no admissible trajectory $ y_{x} \in \cT_x $ from $ x $ to $ z$. 
We will later prove that $ \ccV $ is continuous on $\cP_i \backslash \Gamma$ for any $ i $ by using the comparison principle, but for the moment, $ \ccV|_{\cP_i \backslash \Gamma} $ is a priori a discontinuous function.
In order to deal with such a discontinuity, we use the following notions
\begin{defn}
	Let $ i\in \{1,\ldots,N\} $ and $ u_i: \cP_i \backslash \Gamma \rightarrow \R$ be a bounded function.
	\begin{itemize}
		\item The upper semi-continuous envelope of $ u $ is defined by
		\[
		 u^{\star}_i (x) = \limsup_{z\rightarrow x} u_i(z).
		\]
		\item The lower semi-continuous envelope of $ u_i $ is defined by
		\[
		u_{i\star} (x) = \liminf_{z\rightarrow x} u(z).
		\]
	\end{itemize}
	Notice that both functions $ u_i^\star $ and $ u_{i \star} $ are defined on $ \cP_i $ instead of $ \cP_i \backslash \Gamma $.
\end{defn}

\begin{lem}\label{lem:control}
	Under Assumptions $[A]$ and $[\tilde{A}3]$,
	\begin{itemize}
		\item[$(a)$] there exists a positive constant $C$ such that for all $ z\in \Gamma $ and $ x \in \cS \cap B(\Gamma,R)$,  there exist $ (y_x,\alpha_{x,z}) \in \cT_{x}$ and $ \tau_{x,z} $ such that $ z= y_x (\tau_{x,z}) $ and $ \tau_{x,z} \le C |x-z|  $,
		\item [$(b)$] $ \ccV|_\Gamma $ is Lipschitz continuous on $ \Gamma $.
	\end{itemize}
\end{lem}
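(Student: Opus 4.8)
The plan is to prove the controllability claim $(a)$ by an explicit two-stage construction, and then to deduce the Lipschitz estimate $(b)$ from $(a)$ by a standard dynamic-programming comparison of near-optimal strategies.

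For $(a)$, write $x=(x^i,x^0)\in\cP_i$ and $z=(0,z^0)\in\Gamma$, and recall that $\mathrm{dist}(y,\Gamma)=y\cdot e_i=y^i$ for $y\in\cP_i$. I would build the connecting trajectory in two pieces. \emph{Stage 1 (reaching $\Gamma$).} If $x\in\cP_i\backslash\Gamma$, Remark~\ref{rem:R} guarantees that at every $y\in B(\Gamma,R)\cap\cP_i$ there is a control realizing $f_i(y,a)\cdot e_i=-\delta/2$; since $F_i(y)$ is compact and convex by $[A1]$--$[A2]$, the differential inclusion $\dot y\in\{v\in F_i(y):v\cdot e_i=-\delta/2\}$ admits a solution, along which the Filippov implicit function lemma \cite{MW1967} furnishes a measurable admissible control. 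Here $y^i(t)=x^i-\tfrac{\delta}{2}t$ decreases to $0$ at $t_1=2x^i/\delta$, so the trajectory reaches a point $(0,\tilde x^0)\in\Gamma$ while staying in $B(\Gamma,R)\cap\cP_i$, and $|f|\le M$ gives $|\tilde x^0-x^0|\le Mt_1=2Mx^i/\delta$. If $x\in\Gamma$ this stage is empty, with $\tilde x^0=x^0$, $t_1=0$. \emph{Stage 2 (moving along $\Gamma$).} Put $\sigma=\mathrm{sign}(z^0-\tilde x^0)$. By the second part of $[\tilde{A}3]$, at each $w\in\Gamma$ there are $j$ and $a\in A^{\Gamma}_{j}(w)$ with $f_j(w,a)\cdot e_0=\sigma\delta$; since $a\in A^{\Gamma}_{j}(w)$ forces $f_j(w,a)\cdot e_j=0$, this velocity equals $\sigma\delta\,e_0$. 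Hence the segment $w(t)=(0,\tilde x^0+\sigma\delta t)$ stays on $\Gamma$ and reaches $z$ at $t_2=|z^0-\tilde x^0|/\delta$, with a measurable control again supplied by \cite{MW1967}. Concatenating the two stages produces $(y_x,\alpha_{x,z})\in\cT_x$ with $y_x(\tau_{x,z})=z$, where $\tau_{x,z}=t_1+t_2$. Using $|z^0-\tilde x^0|\le|x^0-z^0|+2Mx^i/\delta$ together with $x^i\le|x-z|$ and $|x^0-z^0|\le|x-z|$ yields $\tau_{x,z}\le C|x-z|$ with $C=3/\delta+2M/\delta^2$, proving $(a)$.

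For $(b)$, fix $z_1,z_2\in\Gamma$. Since $z_1\in\cS\cap B(\Gamma,R)$ and $z_2\in\Gamma$, part $(a)$ provides a controlled trajectory from $z_1$ to $z_2$ that stays on $\Gamma$ (Stage 1 is empty for $z_1\in\Gamma$) in time $\tau\le C|z_1-z_2|$; as it never enters any $\cP_i\backslash\Gamma$, no entry cost is paid along it. Given $\varepsilon>0$, pick an $\varepsilon$-optimal control $\alpha_2$ for $z_2$ and append it after this connecting piece to obtain an admissible strategy for $z_1$. Using $|\ell|\le M$ and $e^{-\lambda\tau}\cJ(z_2,\alpha_2)\le\cJ(z_2,\alpha_2)$, one gets
\[
\ccV(z_1)\le M\tau+e^{-\lambda\tau}\big(\ccV(z_2)+\varepsilon\big)\le\ccV(z_2)+2M\tau+\varepsilon,
\]
where I used $0\le 1-e^{-\lambda\tau}\le\lambda\tau$ and the bound $|\ccV(z_2)|\le M/\lambda$ (valid because staying forever at a point of $\Gamma$ is admissible---$0\in[-\delta,\delta]$ in $[\tilde{A}3]$ forces a null velocity---so the incurred cost is at most $M/\lambda$). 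Letting $\varepsilon\to0$ and combining with $\tau\le C|z_1-z_2|$ gives $\ccV(z_1)\le\ccV(z_2)+2MC|z_1-z_2|$; exchanging $z_1$ and $z_2$ yields the Lipschitz bound on $\ccV|_\Gamma$.

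The delicate point is Stage 2 of $(a)$: the half-plane index $j$ providing the admissible along-$\Gamma$ velocity is only guaranteed pointwise by $[\tilde{A}3]$ and may change from point to point, so a single measurable control $\alpha_{x,z}$ along the segment must be produced through a measurable-selection argument rather than by fixing one index. Once the admissible connecting trajectory is in hand, the time estimate in $(a)$ and the cost comparison in $(b)$ are routine.
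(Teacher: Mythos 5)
Your proof is correct and follows essentially the same route as the paper's: for $(a)$ the identical two-stage construction (normal speed $-\delta/2$ from Remark~\ref{rem:R} to reach $\Gamma$ in time $2x^i/\delta$, then speed $\delta$ along $\Gamma$ via the second part of $[\tilde{A}3]$, with Filippov's lemma supplying the measurable control), and for $(b)$ the standard dynamic-programming comparison that the paper delegates to classical arguments in \cite{BD1997}. The only differences are cosmetic: you merge the paper's two cases into a single construction with a possibly empty first stage, obtain an equivalent constant, and make explicit the measurable-selection point and the bound $|\ccV|\le M/\lambda$ that the paper leaves implicit.
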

\begin{proof} We first note that by applying the classical arguments, (b) is a direct consequence of (a) (see \cite{BD1997}).
	To prove (a), we consider the two following cases.
	\begin{enumerate}
		\item  [Case 1]$ x, z\in \Gamma$. From~\eqref{eq:751}, we can find  $ (y_x,\alpha_{x,z}) \in \cT_{x}$ for which $\alpha_{x,z}$ satisfies
		\[
		f\left(x+\dfrac{z-x}{|z-x|} \delta t, \alpha_{x,z} (t) \right)= \delta \dfrac{z-x}{|z-x|},\quad \text{for all } t\le \dfrac{|z-x|}{\delta}. 
		\]
		It is easy to check that $ y_x (|z-x|/\delta) =z $, i.e. $ \tau_{x,z} = |z-x|/\delta$. 
		
		\item  [Case 2]$ x = (x^i,x^0) \in \cP_i \backslash \Gamma $ and $ z = (0,z^0)\in \Gamma $. One the one hand, from~\eqref{eq:750} and~\eqref{eq:751}, we can pick $ (y_x,\alpha_{x,z}) \in \cT_{x}$ for which $\alpha_{x,z}$ satisfies
		\[
		\begin{cases}
		f\left( y_x(t) , \alpha_{x,z} (t) \right) \cdot e_i = - \dfrac{\delta}{2}, &  t\le \dfrac{2 x^i}{\delta},\\
		f\left( y_x(t) , \alpha_{x,z} (t) \right) = \delta \dfrac{z- y_x(2 x^i / \delta) }{|z- y_x(2 x^i / \delta)|}, &  \dfrac{2 x^i}{\delta} \le t\le \dfrac{2 x^i}{\delta} +  \dfrac{|z- y_x(2 x^i / \delta) |}{ \delta}.
		\end{cases}	
		\]
		This simply means that $ 2x^i / \delta $ is the exit time of $ y_x $  from $ \cP_i \backslash \Gamma $ and $ \tau_{x,z} = \frac{2 x^i}{\delta} +  \frac{|z- y_x(2 x^i / \delta) |}{ \delta}  $.
		On the other hand, let $ \bar{x}= (0,x^0) \in \Gamma $, since $f$ is bounded by $M$, it holds that
		\[
		| z - y_x ( 2 x^i /\delta )   | \le |z- \bar{x}| + |\bar{x} - y_x ( 2 x^i /\delta ) | \le |z-x|+ \dfrac{ x^i(4M^2 \delta^2 -1)^{1/2}}{\delta}  .
		\]
		Since $ x^i \le |z-x| $, we finally obtain
		\[ 
		\tau_{x,z} = \dfrac{2 x^i}{\delta} +  \dfrac{|z- y_x(2 x^i / \delta) |}{ \delta} \le \dfrac{|z-x|}{\delta} \left( 3+ \dfrac{(4M^2 \delta^2 -1)^{1/2}}{\delta} \right).
		\]
		 
	\end{enumerate}
\end{proof}

\subsection{Value function on the interface}
\begin{thm}\label{thm5}
	Under Assumptions $[A]$ and $[\tilde{A}3]$, the restriction of the value function $ \ccV $ to $\Gamma$,  $\ccV|_\Gamma$, is a unique viscosity solution of the equation
	\begin{equation}\label{eq:HJG4}
	\lambda u_\Gamma (x)+ \max \left\{-\lambda \min \{v_{i} (x) +c_i (x) \}, H_\Gamma \left(x,\dfrac{\partial u_\Gamma}{\partial e_0} (x)\right)\right\} = 0, \quad x\in \Gamma.
	\end{equation}
	Here a function $ u_\Gamma : \Gamma \rightarrow \R $ is called a viscosity solution of~\eqref{eq:HJG4} if it satisfies
	\begin{align}
	\lambda u_\Gamma (x)+ \max \left\{-\lambda \min \{v_i^\star (x) +c_i (x) \}, H_\Gamma \left(x,\dfrac{\partial u_\Gamma}{\partial e_0} (x)\right)\right\} &\le 0, \quad \text{in the viscosity sense}\\
	\lambda u_\Gamma (x)+ \max \left\{-\lambda \min \{v_{i \star} (x) +c_i (x) \}, H_\Gamma \left(x,\dfrac{\partial u_\Gamma}{\partial e_0} (x)\right)\right\} &\ge 0, \quad \text{in the viscosity sense}
	\end{align}
\end{thm}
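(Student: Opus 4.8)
The plan is to follow the template of the strong-controllability case (Theorem~\ref{thm1}, which was proved through Lemmas~\ref{lem:fl=FL}--\ref{lem2.3}), adapting every argument to the fact that the half-plane values $v_i$ are now only semicontinuous, so that the upper envelopes $v_i^\star$ must appear in the subsolution test and the lower envelopes $v_{i\star}$ in the supersolution test. The convenient starting point is that $\ccV|_\Gamma$ is already known to be Lipschitz continuous on $\Gamma$ by Lemma~\ref{lem:control}(b); hence it is an admissible continuous candidate and it remains to verify the two viscosity inequalities and then uniqueness.

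For the subsolution inequality I would show that, in the viscosity sense, both arguments of the max are nonpositive. The Hamiltonian part $\lambda\ccV|_\Gamma(x)+H_\Gamma(x,\partial\varphi/\partial e_0(x))\le 0$ is obtained exactly as in Lemma~\ref{lem2.2}: introduce the relaxed tangential field $f\ell_\Gamma(x)$ built from trajectories remaining on $\Gamma$ (which exist by \eqref{eq:751}), use the dynamic programming principle together with the test-function inequality, and pass to the limit, Lemma~\ref{lem:fl=FL} identifying the relaxed field with $\textsc{FL}_\Gamma$. The obstacle part $\ccV|_\Gamma(x)\le\min_i\{v_i^\star(x)+c_i(x)\}$ is the analogue of Lemma~\ref{lem2.1}(b): fix $i$, choose $z_n\in\cP_i\backslash\Gamma$ with $z_n\to x$ and $\ccV(z_n)\to v_i^\star(x)$, build an admissible trajectory starting at $x$ that immediately enters $\cP_i$ (paying the single entry cost $c_i(x)$) and reaches $z_n$ in time $\tau_n\le C|x-z_n|\to0$, then splice an $\varepsilon$-optimal control for $z_n$; letting $n\to\infty$ gives $\ccV|_\Gamma(x)\le c_i(x)+v_i^\star(x)$.

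For the supersolution inequality I would argue as in Lemma~\ref{lem2.3}. For an $\varepsilon_n$-optimal control $\alpha_n$ let $\tau_n$ be the first time $y_{x,\alpha_n}$ leaves $\Gamma$. If along a subsequence $\tau_n\to0$ and the trajectory enters some $\cP_{i_0}$, then writing $\ccV(y_{x,\alpha_n}(\tau_n))=v_{i_0}(y_{x,\alpha_n}(\tau_n))$ and using $y_{x,\alpha_n}(\tau_n)\to x$ with the definition of the lower envelope yields $\ccV|_\Gamma(x)\ge c_{i_0}(x)+v_{i_0\star}(x)\ge\min_j\{v_{j\star}(x)+c_j(x)\}$, so the obstacle term alone makes the max nonnegative. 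Otherwise $\tau_n$ stays bounded away from $0$, the trajectory remains on $\Gamma$ for a fixed time, and the dynamic programming principle with the test function gives $\lambda\ccV|_\Gamma(x)+H_\Gamma(x,\partial\varphi/\partial e_0(x))\ge0$.

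Finally, uniqueness is a comparison statement for the obstacle-type equation \eqref{eq:HJG4} on the one-dimensional interface $\Gamma$. I would double variables, maximizing $u(\zeta)-w(\xi)-|\zeta-\xi|^2/2\varepsilon-\eta|\zeta|^2$ for a subsolution $u$ and a supersolution $w$, and split according to which argument of the max is active for $w$ at the maximizing pair: if it is $H_\Gamma$, one subtracts the two viscosity inequalities and invokes the local Lipschitz estimate on $H_\Gamma$ (as in Case~B.1 of the first proof of Theorem~\ref{thm3}); if it is the obstacle, one compares $u\le\min_i\{v_i^\star+c_i\}$ with $w\ge\min_i\{v_{i\star}+c_i\}$. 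The hard part is precisely this last branch: since $v_{i\star}\le v_i^\star$, a genuine discontinuity of the data would leave a positive gap and block the conclusion $u\le w$. This is resolved by first establishing, through the full-system comparison principle proved later in this section, that $\ccV$ is continuous on each $\cP_i\backslash\Gamma$, so that $v_i^\star=v_{i\star}=v_i$ and the obstacle $\min_i\{v_i+c_i\}$ is continuous; with a continuous obstacle the doubling argument closes and gives $u\le w$, whence $\ccV|_\Gamma$ is the unique solution. Compared with the strong-controllability case, the two genuinely new technical points I expect to spend effort on are the reverse controllability construction needed for the obstacle bound above (relying on the symmetry of the sets in \eqref{eq:750}--\eqref{eq:751} and mirroring Lemma~\ref{lem:control}) and the careful bookkeeping of the upper and lower envelopes throughout.
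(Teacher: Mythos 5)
There is one genuine gap, and it sits exactly at the point the paper flags as the main difficulty of the moderate-controllability setting. For the obstacle part of the subsolution property, $\ccV|_\Gamma(x)\le \min_i\{v_i^\star(x)+c_i(x)\}$, you propose to pick a sequence $z_n\in\cP_i\backslash\Gamma$ realizing the upper envelope ($z_n\to x$, $\ccV(z_n)\to v_i^\star(x)$) and then to build an admissible trajectory from $x$ that reaches $z_n$ in time $\tau_n\le C|x-z_n|$, ``mirroring Lemma~\ref{lem:control}'' and relying on the symmetry of the intervals in \eqref{eq:750}--\eqref{eq:751}. This construction is not available under $[\tilde{A}3]$. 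Lemma~\ref{lem:control}(a) only produces trajectories whose \emph{target} lies on $\Gamma$; the reverse problem (source on $\Gamma$, prescribed target off $\Gamma$) is in general unsolvable, because \eqref{eq:750} controls only the normal component $f_i\cdot e_i$, while the tangential controllability \eqref{eq:751} holds only at points of $\Gamma$, only for controls in $A_j^{\Gamma}$ (hence only for motion along $\Gamma$), and only for \emph{some} $j$, not necessarily $j=i$. Once the trajectory is inside $\cP_i\backslash\Gamma$, its tangential drift $f_i\cdot e_0$ is whatever the chosen control happens to give, so a prescribed point $z_n$ cannot in general be hit; the symmetry of $[-\delta,\delta]$ does not help, since $F_i(x)$ itself is not assumed symmetric (the control achieving normal speed $+\delta/2$ need not have tangential component related to that of the control achieving $-\delta/2$). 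This is precisely the paper's warning that under $[\tilde{A}3]$ ``there is possibly no admissible trajectory $y_x\in\cT_x$ from $x$ to $z$.''

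The repair is short and is exactly what the paper does in Lemma~\ref{lem:weak-compare}(b): you do not need to hit a maximizing sequence. Take any control with $f(y_n(t),\alpha_n(t))\cdot e_i=\delta/2$ for $t\le\tau_n$, $\tau_n\to 0$ (possible by \eqref{eq:750} and Remark~\ref{rem:R}), set $x_n:=y_n(\tau_n)\in\cP_i\backslash\Gamma$, and estimate $\ccV(x)\le M\tau_n+c_i(x)+v_i(x_n)e^{-\lambda\tau_n}$; since $\limsup_n v_i(x_n)\le v_i^\star(x)$ holds for \emph{every} sequence $x_n\to x$ by the definition of the upper envelope, the bound follows on taking $\limsup$. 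With that step replaced, the rest of your proposal coincides with the paper's route: the Hamiltonian subsolution part and the supersolution dichotomy are the envelope-adapted versions of Lemmas~\ref{lem2.2} and~\ref{lem2.3}, which is how the paper proves Theorem~\ref{thm5} (via Lemma~\ref{lem:weak-compare} and its two companion lemmas). Your uniqueness discussion is in fact more explicit than the paper's, which never spells out a comparison argument for the scalar equation \eqref{eq:HJG4} and implicitly defers to Section~\ref{sec10}; your observation that one must first obtain continuity of the $v_i$ (so that $v_i^\star=v_{i\star}$ and the obstacle is continuous) before the obstacle branch of a doubling argument can close is correct, and it is not circular, since the comparison principle of Section~\ref{sec10} is established independently of Theorem~\ref{thm5}.
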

The proof of Theorem~\ref{thm5} is a consequence of the next three lemmas which are similar to  Lemma~\ref{lem2.2}, Lemma~\ref{lem2.1} and Lemma~\ref{lem2.3}, respectively.

\begin{lem}
	The restriction of the value function $\ccV$ on the interface $\Gamma$, $\ccV|_{\Gamma}$ satisfies
	\begin{equation*}
	\lambda \ccV|_\Gamma (x)+  H_\Gamma \left(x,\dfrac{\partial \ccV|_\Gamma}{\partial e_0} (x)\right) \le 0, \quad x\in \Gamma,
	\end{equation*}
	 in the viscosity sense.
\end{lem}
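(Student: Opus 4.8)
The plan is to follow verbatim the argument used for Lemma~\ref{lem2.2}, replacing the value function $\cV$ by $\ccV$ throughout; the only structural ingredient that must be re-examined is the continuity of $\ccV|_\Gamma$, which is no longer furnished by Lemma~\ref{lem:continuity} but is instead guaranteed (in fact, Lipschitz continuity) by Lemma~\ref{lem:control}$(b)$ under $[\tilde A3]$. Since the relaxed vector field $f\ell_\Gamma(x)$ and the set $\textsc{FL}_\Gamma(x)$ are purely kinematic objects defined through trajectories constrained to remain on $\Gamma$, Lemma~\ref{lem:fl=FL} applies unchanged, so it suffices to show, for every $x\in\Gamma$ and every $\varphi\in C^1(\Gamma)$ such that $\ccV|_\Gamma-\varphi$ attains a local maximum at $x$, the inequality
\[
\lambda \ccV|_\Gamma(x) + \sup_{(\zeta,\xi)\in f\ell_\Gamma(x)}\left\{-(\zeta\cdot e_0)\dfrac{\partial\varphi}{\partial e_0}(x)-\xi\right\}\le 0.
\]

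To this end I would fix an arbitrary $(\zeta,\xi)\in f\ell_\Gamma(x)$ and unpack its definition: there are $(y_{x,n},\alpha_n)\in\cT_x$ and $t_n\to 0^+$ with $y_{x,n}(t)\in\Gamma$ for all $t\in[0,t_n]$, and with $\zeta=\lim_n t_n^{-1}\int_0^{t_n}f\,dt=\lim_n (y_{x,n}(t_n)-x)/t_n$ and $\xi=\lim_n t_n^{-1}\int_0^{t_n}\ell\,dt$. Because $y_{x,n}$ stays on $\Gamma$ up to time $t_n$, no entry cost is paid on $[0,t_n]$, and the sub-optimality part of the dynamic programming principle gives $\ccV|_\Gamma(x)\le\int_0^{t_n}\ell(y_{x,n}(t),\alpha_n(t))e^{-\lambda t}\,dt+e^{-\lambda t_n}\ccV|_\Gamma(y_{x,n}(t_n))$. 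Combining this with the local maximum condition $\varphi(x)-\varphi(y_{x,n}(t_n))\le\ccV|_\Gamma(x)-\ccV|_\Gamma(y_{x,n}(t_n))$ and dividing by $t_n$ yields
\[
\dfrac{\varphi(x)-\varphi(y_{x,n}(t_n))}{t_n}\le\dfrac{1}{t_n}\int_0^{t_n}\ell\, e^{-\lambda t}\,dt+\dfrac{\ccV|_\Gamma(y_{x,n}(t_n))(e^{-\lambda t_n}-1)}{t_n}.
\]

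Passing to the limit $n\to\infty$ is the heart of the matter. On the right, continuity of $\ccV|_\Gamma$ at $x$ (Lemma~\ref{lem:control}$(b)$), together with $y_{x,n}(t_n)\to x$ and $(e^{-\lambda t_n}-1)/t_n\to-\lambda$, produces $\xi-\lambda\ccV|_\Gamma(x)$; on the left, writing $y_{x,n}(t_n)=x+t_n(\zeta+o(1)e_0)$ and using $\varphi\in C^1(\Gamma)$ produces $-(\zeta\cdot e_0)\frac{\partial\varphi}{\partial e_0}(x)$. Hence $\lambda\ccV|_\Gamma(x)-(\zeta\cdot e_0)\frac{\partial\varphi}{\partial e_0}(x)-\xi\le 0$, and since $(\zeta,\xi)\in f\ell_\Gamma(x)$ was arbitrary, taking the supremum gives the displayed inequality.

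I expect no serious obstacle, precisely because the subsolution property on the interface concerns only trajectories that never leave $\Gamma$, for which the entry-cost term is inert and the distinction between $[A3]$ and $[\tilde A3]$ is irrelevant. The single point that genuinely uses the moderate controllability hypothesis rather than the strong one is the continuity of $\ccV|_\Gamma$ invoked in the limit passage, which is exactly what Lemma~\ref{lem:control}$(b)$ supplies; had one only the normal controllability $[\tilde H3]$, this continuity could fail and the argument would break down.
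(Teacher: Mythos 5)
Your proposal is correct and is essentially the paper's own proof: the paper simply states that the argument of Lemma~\ref{lem2.2} carries over, and you have reproduced that argument with the one substantive adjustment correctly identified, namely that continuity of $\ccV|_\Gamma$ now comes from Lemma~\ref{lem:control}(b) rather than Lemma~\ref{lem:continuity}, while Lemma~\ref{lem:fl=FL} (whose appendix proof explicitly covers both $[A3]$ and $[\tilde{A}3]$) and the entry-cost-free dynamic programming step on $\Gamma$ go through unchanged.
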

\begin{proof}
The proof of this lemma is similar to the proof of Lemma~\ref{lem2.2}.	
\end{proof}

\begin{lem}\label{lem:weak-compare}
	For all $x\in \Gamma$ 
	\[
	\max_{i=1,\ldots,N} \{ v_i^\star (x) \} \le \ccV|_\Gamma (x) \le \min_{i=1,\ldots,N} \{v_{i}^{\star} (x) +c_i (x) \}.
	\]
\end{lem}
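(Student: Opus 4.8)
The plan is to mirror the proof of Lemma~\ref{lem2.1}, establishing for each fixed $i\in\{1,\ldots,N\}$ and $x\in\Gamma$ the two inequalities
\[
(a)\quad v_i^\star(x)\le\ccV|_\Gamma(x)\qquad\text{and}\qquad(b)\quad\ccV|_\Gamma(x)\le v_i^\star(x)+c_i(x),
\]
after which taking the maximum (resp. minimum) over $i$ yields the claim. The essential new feature, compared with the strongly controllable case, is that under $[\tilde A3]$ the reachability between $\Gamma$ and the interior is \emph{one-sided}, so the two inequalities must be obtained by genuinely different mechanisms. Throughout I use that $v_i=\ccV$ on $\cP_i\backslash\Gamma$, so that $v_i^\star(x)=\limsup_{(\cP_i\backslash\Gamma)\ni z\to x}\ccV(z)$.

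For $(a)$ I would argue as in Lemma~\ref{lem2.1}$(a)$, but invoking Lemma~\ref{lem:control}$(a)$ instead of Lemma~\ref{lem:strong control}. Fix $z\in(\cP_i\backslash\Gamma)\cap B(\Gamma,R)$ close to $x$ and any control $\alpha$ with $(y_x,\alpha)\in\cT_x$. By Lemma~\ref{lem:control}$(a)$ there is an admissible control steering $z$ to $x$ in time $\tau_{z,x}\le C|z-x|$, along a trajectory that stays in $\cP_i$ and then on $\Gamma$ and hence incurs no entry cost; concatenating it with $\alpha$ and using that $\ell$ is bounded by $M$ gives
\[
\ccV(z)\le M\tau_{z,x}+e^{-\lambda\tau_{z,x}}\,\ccV(x).
\]
Since $\tau_{z,x}\le C|z-x|\to0$ as $z\to x$, taking the $\limsup$ over $z\to x$ in $\cP_i\backslash\Gamma$ produces $v_i^\star(x)\le\ccV(x)=\ccV|_\Gamma(x)$.

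For $(b)$ the difficulty is that $[\tilde A3]$ does not permit steering from $x\in\Gamma$ to a prescribed interior point, so the symmetric argument of Lemma~\ref{lem2.1}$(b)$ does not transcribe. The key observation is that $v_i^\star$ is an \emph{upper} envelope: given $\varepsilon>0$ there is $\rho>0$ with $\ccV(w)\le v_i^\star(x)+\varepsilon$ for \emph{every} $w\in(\cP_i\backslash\Gamma)\cap B(x,\rho)$, so it suffices to reach \emph{some} such $w$ quickly. Using the normal controllability~\eqref{eq:750}, I would pick $a_i\in A_i$ with $f_i(x,a_i)\cdot e_i=\delta>0$ and follow the constant control $a_i$; by the Lipschitz continuity of $f_i$ in $[A1]$, $f_i(y_x(s),a_i)\cdot e_i$ stays bounded away from $0$ for short times, so the trajectory enters $\cP_i\backslash\Gamma$ and satisfies $|y_x(t)-x|\le Mt$, whence $w:=y_x(t)\in(\cP_i\backslash\Gamma)\cap B(x,\rho)$ for $t$ small. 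Entering $\cP_i$ at $x$ costs $c_i(x)$ (entry time $0$); prolonging by an $\varepsilon'$-optimal control from $w$ gives
\[
\ccV(x)\le c_i(x)+Mt+e^{-\lambda t}\bigl(v_i^\star(x)+\varepsilon+\varepsilon'\bigr),
\]
and letting $t\to0$ then $\varepsilon,\varepsilon'\to0$ yields $\ccV|_\Gamma(x)\le v_i^\star(x)+c_i(x)$.

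The main obstacle is precisely this step $(b)$: one must resist imitating the two-sided steering available under $[A3]$ and instead exploit that only the upper semicontinuous envelope appears on the right-hand side, which reduces the required reachability to hitting an arbitrary nearby interior point — something the normal controllability~\eqref{eq:750} alone guarantees. The only routine technical checks are that the constant control $a_i$ keeps the trajectory in $\cP_i\backslash\Gamma$ for short times (so the entry genuinely occurs at time $0$ with cost $c_i(x)$) and that $\ccV$ is bounded, so that all the envelopes are finite.
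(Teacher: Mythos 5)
Your proposal is correct and follows essentially the same route as the paper: part $(a)$ uses Lemma~\ref{lem:control}$(a)$ to steer nearby interior points of $\cP_i$ to $x\in\Gamma$ and passes to the $\limsup$, and part $(b)$ uses the normal controllability~\eqref{eq:750} to enter $\cP_i\backslash\Gamma$ immediately from $x$ (paying $c_i(x)$ at time $0$) and bounds $\ccV$ at the reached point by $v_i^\star(x)+\varepsilon$, exactly as the paper does with its sequence $x_n=y_n(\tau_n)$ and $\limsup v_i(x_n)\le v_i^\star(x)$. The only cosmetic difference is that you use a constant control plus Lipschitz continuity of $f_i$ where the paper maintains $f\cdot e_i=\delta/2$ exactly along the trajectory; both are valid.
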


\begin{proof}
	
	Let $ i\in \{1,\ldots,N\} $, it suffices to prove (a) $v_i^\star (x) \le \ccV|_\Gamma (x)$ and (b) $\ccV|_\Gamma (x) \le v_{i}^{\star} (x)  + c_i (x) $ below.
	\begin{enumerate}
		\item [(a)] 	There exists a sequence $ \{x_n\}_{n\in \N} \subset (\cP_i \backslash \Gamma)  \cap B(\Gamma,R) $ such that $ x_n \rightarrow x $ and $ v_i (x_n) \rightarrow v_i^\star (x) $ as $ n \rightarrow \infty $. From Lemma~\ref{lem:control}, there exists a positive constant $ C $ such that there exist $ (y_{x_n}, \alpha_n) \in \cT_{x_n}$ and $ \tau_n $ such that $ y_{x_n} (\tau_n) = x $ and $ \tau_n \le |x_n - x| $ for all $ n $. This implies
		\begin{equation*}
		v_i (x_n) = \ccV(x_n) \le  \int_0^{\tau_n} \ell (y_{x_n} (s), \alpha_n (s) ) e^{-\lambda s} ds + \ccV(x) e^{-\lambda \tau_n} \le MC |x_n - x| + \ccV(x) e^{-\lambda \tau_n}.
		\end{equation*}
		Letting $ n\rightarrow \infty $, one gets $ v_i^\star (x) \le \ccV(x) $.
		\item [(b)] 	
		From Assumption $[\tilde{A}3]$, it allows to pick $ (y_{n},\alpha_n) \in \cT_{x}$ and $ \tau_n \rightarrow 0 $  where $\alpha_n$ satisfies
		\[
		f(y_n(t), \alpha_n (t)) \cdot e_i = \dfrac{\delta}{2} >0, \quad \text{for all } t\le \tau_n. 
		\]
		Let $ x_n := y_n(\tau_n) $ then $ x_n\in \cP_i \backslash \Gamma $ and  $ x_n \rightarrow x $ as $ n \rightarrow \infty$. 
		This implies
		\begin{eqnarray}
		\ccV(x) &\le &  \int_0^{\tau_n} \ell (y_{n} (s), \alpha_n (s) ) e^{-\lambda s} ds + c_i(x) + \ccV (x_n) e^{-\lambda \tau_n} \nonumber\\  
		&\le& MC \tau_n + c_i (x)  + v_i (x_n) e^{-\lambda \tau_n}. \nonumber
		\end{eqnarray}
		Taking limsup of both sides as $ n\rightarrow \infty $, one gets $ \ccV (x) \le c_i (x) + v_i^\star (x) $.
	\end{enumerate}

\end{proof}

\begin{lem}
	Let $ x\in \Gamma $ and assume that $\ccV|_\Gamma (x) < \min \{v_{i \star} (x) +c_i(x) \}$, then $ \ccV|_\Gamma $ satisfies
	\[
	\lambda \ccV|_\Gamma (x) + H_\Gamma \left( x, \dfrac{\partial \ccV|_\Gamma}{\partial e_0} (x) \right) \ge 0.   
	\]
	in the viscosity sense.
\end{lem}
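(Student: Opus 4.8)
The plan is to adapt the proof of Lemma~\ref{lem2.3}, the only genuinely new ingredient being that $\ccV$ need not be continuous up to $\Gamma$, so the one-sided limits of the post-entry cost must be controlled from below by the lower semicontinuous envelopes $v_{i\star}$. Fix $\varphi\in C^1(\Gamma)$ so that $\ccV|_\Gamma-\varphi$ has a local minimum at $x$; by Lemma~\ref{lem:fl=FL} it suffices to establish
\[
\lambda\ccV|_\Gamma(x)+\max_{(\zeta,\xi)\in f\ell_\Gamma(x)}\left\{-(\zeta\cdot e_0)\frac{\partial\varphi}{\partial e_0}(x)-\xi\right\}\ge 0.
\]
I would choose $\varepsilon_n\to 0^+$ and $\varepsilon_n$-optimal controls $\alpha_n$ for $x$, i.e.\ $\ccV(x)+\varepsilon_n>\cJ(x,\alpha_n)$, and let $\tau_n$ be the first time $y_{x,\alpha_n}$ leaves $\Gamma$ (possibly $+\infty$). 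Since $y_{x,\alpha_n}(t)\in\Gamma$ for $t<\tau_n$, continuity forces $y_{x,\alpha_n}(\tau_n)\in\Gamma$. After passing to a subsequence, either $\tau_n\to 0$ (Case 1) or $\tau_n\ge C$ for some $C>0$ (Case 2).

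In Case 1, a further extraction makes the trajectory enter a fixed half-plane $\cP_{i_0}$ just after $\tau_n$, paying the entry cost $c_{i_0}(y_{x,\alpha_n}(\tau_n))$. Splitting the cost at time $\tau_n$, the dynamic programming principle together with $\varepsilon_n$-optimality gives
\[
\ccV(x)+\varepsilon_n>\int_0^{\tau_n}\ell(y_{x,\alpha_n}(s),\alpha_n(s))e^{-\lambda s}\,ds+\Big(c_{i_0}(y_{x,\alpha_n}(\tau_n))+R_n\Big)e^{-\lambda\tau_n},
\]
where $R_n$ denotes the remaining cost after entry; because the trajectory then lies in $\cP_{i_0}\backslash\Gamma$ at points converging to $x$, one has $\liminf_n R_n\ge v_{i_0\star}(x)$. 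Using boundedness of $\ell$, $\tau_n\to 0$, and Lipschitz continuity of $c_{i_0}$, passing to the $\liminf$ yields $\ccV(x)\ge c_{i_0}(x)+v_{i_0\star}(x)\ge\min_i\{v_{i\star}(x)+c_i(x)\}$, contradicting the hypothesis. Hence only Case 2 can occur.

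In Case 2 the trajectory stays on $\Gamma$ throughout $[0,C]$, where $\ccV|_\Gamma$ is Lipschitz by Lemma~\ref{lem:control}(b), and the argument is then identical to Case 2 of Lemma~\ref{lem2.3}: combining the minimum property of $\varphi$ with the dynamic programming principle on $[0,\tau]$, one selects $t_n\in(0,C)$ with $\varepsilon_n/t_n\to 0$, extracts a subsequence along which $\big(t_n^{-1}(y_{x,\alpha_n}(t_n)-x),\,t_n^{-1}\int_0^{t_n}\ell\,ds\big)$ converges to some $(\bar\zeta,\bar\xi)\in f\ell_\Gamma(x)$, and passes to the limit to get $\lambda\ccV|_\Gamma(x)-(\bar\zeta\cdot e_0)\frac{\partial\varphi}{\partial e_0}(x)-\bar\xi\ge 0$, which is exactly the displayed inequality. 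The main obstacle is precisely Case 1: the discontinuity of $\ccV$ forces the post-entry cost to be estimated only from below, through the envelope $v_{i_0\star}$, which is the reason the statement is phrased with $v_{i\star}$ rather than with a continuous trace; the remaining bookkeeping (the $\liminf$ passage and the extraction in Case 2) is routine.
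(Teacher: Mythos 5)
Your proposal is correct and takes essentially the same approach as the paper: the paper's own proof consists solely of the remark that the argument is ``similar to the proof of Lemma~\ref{lem2.3},'' and your write-up is precisely that adaptation. In particular, you correctly identify the one genuinely new ingredient --- in Case 1 the post-entry cost can only be bounded from below by $\liminf$ along points of $\cP_{i_0}\backslash\Gamma$ converging to $x$, which produces the envelope $v_{i_0\star}(x)$ and explains why the hypothesis is stated with $v_{i\star}$ --- while Case 2 goes through verbatim using the continuity of $\ccV|_\Gamma$ from Lemma~\ref{lem:control}.
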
 

\begin{proof}
	The proof of this lemma is similar to the proof of Lemma~\ref{lem2.3}.
\end{proof}


\subsection{Hamilton-Jacobi system and viscosity solution}
\begin{defn}
	A function $ \cU := ( u_1,\ldots,u_N, u_\Gamma ) $, where each $ u_i: \cP_i \rightarrow \R $ is a bounded function and $ u_\Gamma : \Gamma \rightarrow \R $ is a bounded continuous function, is called a discontinuous viscosity solution of~\eqref{eq:HJmain} if $ \cU^\star = (u_1^\star, \ldots u_N^\star, u_\Gamma) $ is a viscosity subsolution of~\eqref{eq:HJmain} and  $ \cU_\star = (u_{1\star}, \ldots u_{N\star}, u_\Gamma) $ is a viscosity supersolution of~\eqref{eq:HJmain}.
\end{defn}

\begin{thm}
	Under Assumptions $[A]$ and $[\tilde{A}3]$, $ V:=(v_1,\ldots,v_N, \ccV|_\Gamma) $ is a viscosity solution of~\eqref{eq:HJmain}. 
\end{thm}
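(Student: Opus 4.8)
The statement to establish is that $V=(v_1,\dots,v_N,\ccV|_\Gamma)$ is a \emph{discontinuous} viscosity solution of \eqref{eq:HJmain}, i.e.\ that $V^\star=(v_1^\star,\dots,v_N^\star,\ccV|_\Gamma)$ is a subsolution and $V_\star=(v_{1\star},\dots,v_{N\star},\ccV|_\Gamma)$ is a supersolution. No envelope is needed on the last component since $\ccV|_\Gamma$ is continuous by Lemma~\ref{lem:control}(b). The third line of \eqref{eq:HJmain}, the interface equation, is already granted: it is precisely the content of Theorem~\ref{thm5}, whose subsolution inequality is written with $\min_i\{v_i^\star+c_i\}$ and whose supersolution inequality with $\min_i\{v_{i\star}+c_i\}$, matching the $\Gamma$-line of \eqref{eq:HJmain} read in the discontinuous sense. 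Thus the plan reduces to proving the first two lines, namely the equation for each $v_i$ on $\cP_i\backslash\Gamma$ and on $\Gamma$.

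On the open set $\cP_i\backslash\Gamma$ the equation is the classical infinite-horizon Hamilton--Jacobi--Bellman equation $\lambda u+H_i(x,\partial u)=0$. Since the domain is open, I would invoke the standard Barles--Perthame argument: the dynamic programming principle shows, with no continuity assumption on $\ccV$, that the upper envelope $v_i^\star$ is a subsolution and the lower envelope $v_{i\star}$ a supersolution there (see \cite{BD1997,Barles1994}). This part carries over verbatim from the classical theory.

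The new content is the equation on $\Gamma$, and here I would mirror the three lemmas of the strong-controllability section. For the subsolution inequality, Lemma~\ref{lem:weak-compare} gives $v_i^\star(x)\le\ccV|_\Gamma(x)$, hence $\lambda v_i^\star(x)-\lambda\ccV|_\Gamma(x)\le0$, and it suffices to show $\lambda v_i^\star(x)+H_i^+(x,\partial\varphi(x))\le0$ in the viscosity sense, as in Lemma~\ref{lem:value-sub-sol}. Using Remark~\ref{rem:R}, on $B(\Gamma,R)\cap\cP_i$ one may select a control $a_i$ with $f_i(\cdot,a_i)\cdot e_i>0$, so trajectories issued near $x$ stay in $\cP_i\backslash\Gamma$ for a fixed time; then one runs the dynamic programming principle and Gr\"onwall exactly as before, the only change being to pass to the limit along a sequence $x_n\to x$ with $v_i(x_n)\to v_i^\star(x)$ rather than appealing to continuity. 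For the supersolution inequality, if $v_{i\star}(x)=\ccV|_\Gamma(x)$ the $-\lambda\ccV|_\Gamma$ branch yields the inequality immediately; otherwise $v_{i\star}(x)<\ccV|_\Gamma(x)$, and I would first re-prove the analog of Lemma~\ref{lem:remain}---any near-optimal trajectory from $z\in\cP_i\backslash\Gamma$ close to $x$ must stay in $\cP_i\backslash\Gamma$ for a fixed time $\bar\tau$, since otherwise the limit forces $v_{i\star}(x)\ge\ccV|_\Gamma(x)$---and then reproduce the dynamic-programming computation of Lemma~\ref{lem:value-super-sol}, using the closedness and convexity of $\textsc{FL}_i(x)$ to extract $\bar a\in A_i^+(x)$ and conclude $\lambda v_{i\star}(x)+H_i^+(x,\partial\varphi(x))\ge0$.

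The main obstacle is the loss of continuity of the value function: every step that previously relied on Lipschitz or continuous dependence of $\ccV$ must now be rerouted through the semicontinuous envelopes $v_i^\star,v_{i\star}$ and suitable approximating sequences, and the controllability supplied by $[\tilde{A}3]$ is only one-sided (into $\cP_i$), replacing the two-sided connectivity that $[A3]$ afforded in Lemma~\ref{lem:strong control}. I expect the delicate point to be the supersolution argument on $\Gamma$, where the double limit in $(\varepsilon_n,\tau_m)$, the convexity of $\textsc{FL}_i(x)$, and the selection of $\bar a$ with $f_i(x,\bar a)\cdot e_i\ge0$ all have to be carried through with $v_{i\star}$ in place of the continuous $v_i$.
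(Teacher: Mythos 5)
Your proposal is correct and follows essentially the same route as the paper: the paper's own proof consists precisely of re-running Lemma~\ref{lem:value-sub-sol} (subsolution) and Lemma~\ref{lem:value-super-sol} (supersolution) with sequences $x_n\to x$ along which $v_i(x_n)\to v_i^\star(x)$ (resp.\ $v_{i\star}(x)$) replacing the continuity of the value function, together with the analogue of Lemma~\ref{lem:remain} stated under the hypothesis $v_{i\star}(x)<\ccV|_\Gamma(x)$. Your additional observations---that the interface line is already supplied by Theorem~\ref{thm5} with the correct envelopes, that the case $v_{i\star}(x)=\ccV|_\Gamma(x)$ is absorbed by the $-\lambda\ccV|_\Gamma$ branch via Lemma~\ref{lem:weak-compare}, and that the equation on $\cP_i\backslash\Gamma$ is classical---match the structure the paper leaves implicit.
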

The proof of viscosity subsolution follows from that of Lemma~\ref{lem:value-sub-sol} where instead of using an arbitrary sequence which tends to $ x \in \Gamma $ as $ n\rightarrow \infty $, we work with a sequence $ \{x_n\}_{n\in \N} \subset \cP_i \backslash \Gamma $ which satisfies
\[
\begin{cases}
x_n \rightarrow x,\\
v_i(x_n) \rightarrow v_i^\star (x),
\end{cases}
\text{as } n\rightarrow \infty.  
\]

The first difference between the proof of viscosity supersolution with the one of Lemma~\ref{lem:value-super-sol} is the following lemma, which has a similar proof as Lemma~\ref{lem:remain}.

\begin{lem}
	Let $x\in \Gamma$ and assume that
	\begin{equation}\label{753}
	v_{i \star}(x) < \ccV|_\Gamma (x).
	\end{equation}
	Then, for any sequence $ \{z_n\}_{n\in \N}\subset \cP_i \backslash \Gamma $ such that
	\begin{equation}\label{754}
	\begin{cases}
	z_n \rightarrow x,\\
	v_i(z_n) \rightarrow v_{i \star} (x),
	\end{cases} \text{as } n\rightarrow \infty,
	\end{equation}
	  there exist $\bar{\tau}>0$ and $r>0$  such that, for any $\varepsilon$ sufficiently small and any $\varepsilon$-optimal control law $\alpha^{\varepsilon}_{z_n}$ for $z_n$,
	\[
	y_{z_n,\alpha^\varepsilon_{z_n}} (s) \in \cP_i \backslash \Gamma, \quad \text{for all } s\in [0,\bar{\tau}].
	\]
\end{lem}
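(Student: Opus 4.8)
The plan is to argue by contradiction, following the scheme of Lemma~\ref{lem:remain} but adapting it to the present setting, where $v_i=\ccV|_{\cP_i\backslash\Gamma}$ is only a bounded, a priori discontinuous, function. The two changes I anticipate are: replacing $\cV|_\Gamma$ by $\ccV|_\Gamma$ throughout, and replacing the value $v_i(x)$ (unavailable here, since $v_i$ need not be continuous at $x$) by the lower envelope $v_{i\star}(x)$, which I will reach precisely by testing the estimate along the prescribed sequence \eqref{754}.

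So suppose the conclusion fails. Negating the statement and choosing $\bar\tau=1/k$, I would extract indices $n_k$, positive reals $\varepsilon_k\to0^+$ and $\varepsilon_k$-optimal control laws $\alpha_k:=\alpha^{\varepsilon_k}_{z_{n_k}}$ for which the trajectory $y_k:=y_{z_{n_k},\alpha_k}$ first reaches $\Gamma$ at a time $\tau_k\le 1/k$, so that $\tau_k\to0^+$ and $y_k(\tau_k)\in\Gamma$. Since the speed is bounded by $M$ (Assumption $[A1]$), one has $\mathrm{dist}(z_{n_k},\Gamma)\le|y_k(\tau_k)-z_{n_k}|\le M\tau_k\to0$; as $z_n\to x\in\Gamma$ and each $z_{n_k}\in\cP_i\backslash\Gamma$, this forces $n_k\to\infty$, whence $z_{n_k}\to x$ and therefore $y_k(\tau_k)\to x$.

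Next, I would invoke $\varepsilon_k$-optimality together with the dynamic programming principle, splitting the cost at the first exit time $\tau_k$; on $[0,\tau_k]$ the trajectory stays in $\cP_i\backslash\Gamma$, so no entry cost is charged and
\[
v_i(z_{n_k})+\varepsilon_k>\cJ(z_{n_k},\alpha_k)=\int_0^{\tau_k}\ell(y_k(s),\alpha_k(s))e^{-\lambda s}\,ds+e^{-\lambda\tau_k}\,\cJ\bigl(y_k(\tau_k),\alpha_k(\cdot+\tau_k)\bigr).
\]
Bounding the tail cost from below by $\ccV(y_k(\tau_k))=\ccV|_\Gamma(y_k(\tau_k))$ (valid since $y_k(\tau_k)\in\Gamma$) and using $|\ell|\le M$, this reduces to $v_i(z_{n_k})+\varepsilon_k\ge -M\tau_k+e^{-\lambda\tau_k}\ccV|_\Gamma(y_k(\tau_k))$. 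Letting $k\to\infty$, the left-hand side tends to $v_{i\star}(x)$ by \eqref{754}, while on the right $\tau_k\to0$ and $\ccV|_\Gamma(y_k(\tau_k))\to\ccV|_\Gamma(x)$ because $\ccV|_\Gamma$ is Lipschitz continuous by Lemma~\ref{lem:control}(b) and $y_k(\tau_k)\to x$. This yields $v_{i\star}(x)\ge\ccV|_\Gamma(x)$, contradicting \eqref{753}.

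The step I expect to need the most care is precisely this final limit on the right-hand side: under $[\tilde{A}3]$ I cannot appeal to Lemma~\ref{lem:continuity} for continuity of the value function near $\Gamma$, so I must lean on the continuity of $\ccV|_\Gamma$ from Lemma~\ref{lem:control}(b), and I must make sure that the extracted endpoints $y_k(\tau_k)$ really return to $x$ — which is what the bounded-speed estimate above guarantees. The role of $r$ in the statement is purely to restrict attention to the tail of $\{z_n\}$, and plays no active part in the argument.
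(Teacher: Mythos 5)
Your proof is correct and takes essentially the same approach as the paper, which for this lemma only remarks that it ``has a similar proof as Lemma~\ref{lem:remain}'': a contradiction argument, splitting the cost of an $\varepsilon$-optimal trajectory at its first hitting time of $\Gamma$ and using $\left|\ell\right|\le M$. Your two adaptations are exactly the ones the moderate-controllability setting requires — passing to the limit on the left-hand side along the prescribed sequence \eqref{754} so as to reach $v_{i\star}(x)$ rather than $v_i(x)$, and invoking the Lipschitz continuity of $\ccV|_\Gamma$ from Lemma~\ref{lem:control}(b) (instead of the continuity of the value function near $\Gamma$, unavailable here) to pass to the limit on the right-hand side.
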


The last difference between the proof of viscosity supersolution with the one of Lemma~\ref{lem:value-super-sol} is that instead of using an arbitrary sequence which tends to $ x \in \Gamma $ as $ n\rightarrow \infty $, we work with a sequence $ \{z_n\}_{n\in \N} \subset \cP_i \backslash \Gamma $ satisfying~\eqref{754}.


\subsection{Comparison Principle and Uniqueness}\label{sec10}
In the proof of the Comparison Principle under the strong controllability assumption $[A3]$, the key argument is that the restrictions of the viscosity subsolutions of~\eqref{eq:HJmain} to $\cP_i \backslash \Gamma$ are Lipschitz continuous in a neighborhood of $ \Gamma $. This property is not obtained directly in the current framework, under the moderate controllability assumption $[\tilde{A}3]$. We rather proceed as follows. First, we regularize a restriction of viscosity subsolution to $ \cP_i \backslash \Gamma$ using  sup-convolution to obtain a family of  Lipschitz continuous functions. Then we use this family of regularized functions to prove a local comparison principle which we finally extend to a global comparison principle by applying similar arguments as  the ones in the proof of \cite[Theorem 3.3.4]{Oudet2016_these}.

We begin the first step with sup-convolution definition.

\begin{defn}\label{def: alpha}
	Let $ u_i : \cP_i \rightarrow \R$ be a bounded, USC function and $ \alpha,p $ be positive numbers. Then sup-convolution of $ u_i $ with respect to the $ x^0 $-variable is defined by
	\[ 
	u_i^\alpha (x):= \sup_{z^0\in \R} \left\{  u( z^0 e_0 + x^i e_i ) - \left( \dfrac{|z^0 - x^0|^2}{\alpha^2} + \alpha \right)^\frac{p}{2}   \right\} , \quad \text{if } x= x^0 e_0 + x^i e_i .
	\] 
\end{defn}

We borrow the following  useful lemmas from \cite[Lemma 3.3.7 to 3.3.10]{Oudet2016_these}.

\begin{lem}\label{lem:11}
	Let $ u_i : \cP_i \rightarrow \R$ be a bounded function and $ \alpha,p $ be positive numbers.  Then, for any $ x\in \cP_i $, the supremum which defines $ u_i^\alpha $ is achieved at a point $ z^0\in \R $ such that
	\begin{equation}\label{121}
	\left(  \dfrac{|z^0 - x^0|^2}{\alpha^2} +\alpha \right) ^\frac{p}{2} \le 2 || u_i ||_{L^\infty (\cP_i)} + \alpha^\frac{p}{2}.
	\end{equation}
\end{lem}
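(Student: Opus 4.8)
The plan is to verify the two assertions in turn: that the supremum defining $u_i^\alpha(x)$ is attained, and that the penalty term is controlled as in \eqref{121} at any maximizer. Throughout, fix $x = x^0 e_0 + x^i e_i \in \cP_i$ and write
\[
\Phi(z^0) := u_i(z^0 e_0 + x^i e_i) - \left( \frac{|z^0 - x^0|^2}{\alpha^2} + \alpha \right)^{\frac{p}{2}}, \qquad z^0 \in \R,
\]
so that $u_i^\alpha(x) = \sup_{z^0 \in \R} \Phi(z^0)$.

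First I would establish attainment by a coercivity argument. Since $p>0$, the penalty $z^0 \mapsto \bigl( |z^0 - x^0|^2/\alpha^2 + \alpha \bigr)^{p/2}$ is continuous and tends to $+\infty$ as $|z^0| \to +\infty$. Because $u_i$ is bounded, this forces $\Phi(z^0) \to -\infty$ as $|z^0| \to +\infty$; hence there is a compact interval $I \subset \R$ outside of which $\Phi$ stays strictly below the value $\Phi(x^0)$, so that $\sup_{\R} \Phi = \sup_{I} \Phi$. Using the upper semi-continuity of $u_i$ carried over from Definition~\ref{def: alpha} together with the continuity of the penalty, the map $\Phi$ is upper semi-continuous, and therefore attains its maximum on the compact set $I$. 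This produces a maximizer $z^0 \in \R$.

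Second, for the quantitative bound \eqref{121}, I would compare the value at the maximizer with the value at the trivial competitor $z^0 = x^0$. Since $z^0$ is a maximizer,
\[
\Phi(z^0) \ge \Phi(x^0) = u_i(x) - \alpha^{\frac{p}{2}},
\]
which rearranges to
\[
\left( \frac{|z^0 - x^0|^2}{\alpha^2} + \alpha \right)^{\frac{p}{2}} \le u_i(z^0 e_0 + x^i e_i) - u_i(x) + \alpha^{\frac{p}{2}}.
\]
Bounding the difference of the two values of $u_i$ crudely by $2 \|u_i\|_{L^\infty(\cP_i)}$ then yields exactly \eqref{121}.

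There is no genuine obstacle here: this is the standard coercivity-plus-comparison estimate for sup-convolutions. The only point demanding a little care is the attainment claim, which relies on the upper semi-continuity of $u_i$ from Definition~\ref{def: alpha}; with mere boundedness one would instead obtain an almost-maximizer, which would still satisfy \eqref{121} up to an arbitrarily small additive error and suffice for the later applications.
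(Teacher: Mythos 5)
Your proof is correct and takes essentially the same route as the paper: compare the value at a maximizer with the trivial competitor $z^0 = x^0$, which gives $\bigl(|z^0-x^0|^2/\alpha^2+\alpha\bigr)^{p/2} \le u_i(z^0e_0+x^ie_i)-u_i(x)+\alpha^{p/2}$, and then bound the difference by $2\|u_i\|_{L^\infty(\cP_i)}$. The only difference is that the paper simply asserts attainment (``it is easy to see''), whereas you supply the coercivity-plus-upper-semicontinuity argument and correctly note that the USC hypothesis comes from Definition~\ref{def: alpha} rather than from the lemma's own statement, with approximate maximizers sufficing otherwise.
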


\begin{lem}\label{lem:12}
	Let $ u_i : \cP_i \rightarrow \R$ be a bounded function. Then for any $ \alpha,p>0 $, the sup-convolution $ u_\alpha $ is locally Lipschitz continuous with respect to $ x^0 $, i.e. for any compact subset $ K $ in $ \R^3 $, there exists $ C_K>0 $ such that for all $ x= x^0 e_0 + x^i e_i $, $ y= y^0 e_0 + x^i e_i \in K \cap \cP_i$
	\[ 
	|u^\alpha_i (x) - u^\alpha_i (y)| \le C_K |x^0 - y^0|.
	\]
\end{lem}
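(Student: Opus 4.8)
The plan is to reduce the estimate to a one-dimensional sup-convolution along the $e_0$-direction and to extract the needed regularity from the penalty term rather than from $u_i$, which is only assumed bounded. Fix $x = x^0 e_0 + x^i e_i$ and $y = y^0 e_0 + x^i e_i$ in $K \cap \cP_i$; note that they share the same $x^i$-component, so the inner function $t \mapsto u_i(t e_0 + x^i e_i)$ is identical for both. Writing $\phi(s) := \left( s^2/\alpha^2 + \alpha \right)^{p/2}$ for the penalty, we have $u_i^\alpha(x) = \sup_{z^0 \in \R}\{ u_i(z^0 e_0 + x^i e_i) - \phi(z^0 - x^0) \}$.

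First I would invoke Lemma~\ref{lem:11} to pick a maximizer $z_x^0 \in \R$ realizing $u_i^\alpha(x)$ and satisfying the bound~\eqref{121}. Inverting~\eqref{121} (the base $s^2/\alpha^2 + \alpha$ is bounded below by $\alpha>0$) produces a constant $R_0 = R_0(\alpha,p,\|u_i\|_{L^\infty(\cP_i)})$ with $|z_x^0 - x^0| \le R_0$. Since $x,y \in K$ forces $|x^0|,|y^0| \le R_K$ for some $R_K$ depending only on $K$, both $z_x^0 - x^0$ and $z_x^0 - y^0$ lie in the fixed compact interval $[-S,S]$ with $S := R_0 + 2R_K$.

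The crucial point is that one must \emph{not} translate the inner maximizer (the usual sup-convolution trick), since that would leave a difference of values of $u_i$, for which no modulus of continuity is available. Instead I use $z_x^0$ as a \emph{suboptimal} competitor for $u_i^\alpha(y)$, i.e. $u_i^\alpha(y) \ge u_i(z_x^0 e_0 + x^i e_i) - \phi(z_x^0 - y^0)$. Subtracting this from the identity defining $u_i^\alpha(x)$ cancels the irregular term $u_i(z_x^0 e_0 + x^i e_i)$ and leaves
\[
u_i^\alpha(x) - u_i^\alpha(y) \le \phi(z_x^0 - y^0) - \phi(z_x^0 - x^0).
\]
Because $\phi \in C^1(\R)$ with $\phi'(s) = \tfrac{p s}{\alpha^2}\left( s^2/\alpha^2 + \alpha \right)^{p/2 - 1}$, and the two arguments lie in $[-S,S]$, the mean value theorem bounds the right-hand side by $C_K\,|x^0 - y^0|$, where $C_K := \sup_{|s|\le S} |\phi'(s)| < \infty$ depends only on $\alpha,p,\|u_i\|_{L^\infty(\cP_i)}$ and $K$. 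Exchanging the roles of $x$ and $y$ gives the reverse inequality, and the two combine to the claimed Lipschitz estimate.

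The only genuine obstacle is the observation in the third paragraph: recognizing that regularity cannot come from $u_i$ and must be transferred from the smooth penalty $\phi$ after its argument has been controlled a priori through Lemma~\ref{lem:11}. Once the maximizer is confined to a bounded set, the remainder is a routine mean-value bound.
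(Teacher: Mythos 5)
Your proof is correct and follows essentially the same route as the paper's: cancel the $u_i$ term, reduce to a difference of penalty values, and bound that difference by the mean value theorem, using Lemma~\ref{lem:11} to confine the relevant $z^0$ to a compact set. If anything, your version is slightly more careful than the paper's, which bounds $u_i^\alpha(x)-u_i^\alpha(y)$ by a supremum of penalty differences over \emph{all} $z^0\in\R$ (literally unbounded when $p>1$) and only implicitly restricts $z^0$ to the maximizing region, whereas testing with the explicit maximizer, as you do, makes that restriction rigorous.
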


\begin{lem}\label{lem:Lips1}
	Under Assumptions $[A]$ and $[\tilde{A}3]$, let $ R>0 $ be a positive number as in Remark~\ref{rem:R}.
	Let $ u : \cP_i \rightarrow \R $ be a bounded, USC subsolution of~\eqref{eq:HJmain} and $ \alpha,p $ be some positive numbers.
	Then for all $ M>0 $, $ u_i^\alpha $ is Lipschitz continuous in $ B_M (\Gamma, R) \cap \cP_i$, where $ B_M (\Gamma,R):=\{x\in B(\Gamma,R) : |x^0|\le M  \} $.
\end{lem}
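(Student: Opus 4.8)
The plan is to show that the viscosity superdifferential $D^+u_i^\alpha(x)$ is \emph{uniformly bounded} as $x$ ranges over the (convex) set $B_M(\Gamma,R)\cap\cP_i$; a uniform bound on the interior $(\cP_i\backslash\Gamma)\cap B_M(\Gamma,R)$ yields Lipschitz continuity there, and the estimate is then extended up to $\Gamma$. By Lemma~\ref{lem:12} the $e_0$-component of any element of $D^+u_i^\alpha(x)$ is already under control: it is bounded by the local Lipschitz constant $C_K$ attached to the compact set $K:=\overline{B_M(\Gamma,R)}\cap\cP_i$. Hence the only thing to control is the $e_i$-component, and for that I would exploit the coercivity of $H_i$ in the direction $e_i$ provided by Remark~\ref{rem:R}.

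The first technical step is to transfer the subsolution inequality from $u$ to $u_i^\alpha$. Fix $\varphi\in C^1(\cP_i)$ with $u_i^\alpha-\varphi$ attaining a local maximum at $x_0=(x_0^i,x_0^0)$, and let $z_0^0$ realize the supremum defining $u_i^\alpha(x_0)$ (it exists, with $|z_0^0-x_0^0|$ bounded, by Lemma~\ref{lem:11}). Writing $\hat{x}_0:=z_0^0 e_0+x_0^i e_i$, a routine sup-convolution computation shows that the $C^1$ map $w\mapsto\big(\tfrac{|w\cdot e_0-x_0^0|^2}{\alpha^2}+\alpha\big)^{p/2}+\varphi\big((w\cdot e_i)e_i+x_0^0 e_0\big)$ touches $u$ from above at $\hat{x}_0$, so that $p:=\partial\varphi(x_0)\in D^+u(\hat{x}_0)$; moreover $u_i^\alpha(x_0)\le u(\hat{x}_0)$ because the penalty is nonnegative. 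Since $\hat{x}_0$ and $x_0$ share their $e_i$-coordinate, we have $\hat{x}_0\in\cP_i\backslash\Gamma$ (resp. $\hat{x}_0\in B(\Gamma,R)$) exactly when $x_0\notin\Gamma$ (resp. $x_0\in B(\Gamma,R)$).

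Next, for $x_0\notin\Gamma$ the subsolution property of $u$ gives $\lambda u(\hat{x}_0)+H_i(\hat{x}_0,p)\le0$, hence $\lambda u_i^\alpha(x_0)+H_i(\hat{x}_0,p)\le0$. Using Remark~\ref{rem:R} I would select $a\in A_i$ realizing $f_i(\hat{x}_0,a)\cdot e_i=-\tfrac{\delta}{2}\operatorname{sgn}(p\cdot e_i)$, which together with the bounds in $[A1]$ produces the one-directional coercivity
\[
H_i(\hat{x}_0,p)\ge \tfrac{\delta}{2}\,|p\cdot e_i|-M\,|p\cdot e_0|-M.
\]
Combining this with $|p\cdot e_0|\le C_K$ and the boundedness of $u$ yields $|p\cdot e_i|\le\tfrac{2}{\delta}\big(MC_K+M+\lambda\|u\|_{L^\infty(\cP_i)}\big)$, so $|p|$ is bounded uniformly over all superdifferential elements at interior points, giving the Lipschitz bound on $(\cP_i\backslash\Gamma)\cap B_M(\Gamma,R)$ with a constant $L$ independent of the base point.

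Finally, I would extend the estimate to $\Gamma$. For $x\in\Gamma$ and $y$ interior, the inequality $u_i^\alpha(x)\ge u_i^\alpha(y)-L|x-y|$ follows from the interior bound and the upper semicontinuity of $u_i^\alpha$. The reverse inequality amounts to ruling out an upward spike of $u_i^\alpha$ at $\Gamma$; here I would invoke the boundary inequality $\lambda u(\hat{x}_0)+H_i^+(\hat{x}_0,p)\le0$, noting that such a spike forces $p\cdot e_i\to-\infty$ while the control with $f_i(\hat{x}_0,a)\cdot e_i=\tfrac{\delta}{2}\ge0$ lies in $A_i^+(\hat{x}_0)$, so the same coercivity lower bound persists for $H_i^+$ in the regime $p\cdot e_i\le0$ and produces the desired contradiction. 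The main obstacle I anticipate is precisely this boundary step: the one-sided Hamiltonian $H_i^+$ is coercive only for inward gradients, so some care is needed to confirm that it is the inward direction that is relevant at $\Gamma$ and to match the superdifferential of $u_i^\alpha$ at $\hat{x}_0\in\Gamma$ with the constrained Hamiltonian. This is exactly the delicate point handled by Ishii's technique, and I would lean on the argument of \cite[Section 3.2.3]{Oudet2016_these} (itself based on \cite{Ishii2013}) to complete it.
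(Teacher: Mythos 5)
Your proposal is correct and is essentially the paper's own argument: the paper proves this lemma by observing that a subsolution of \eqref{eq:HJmain} is in particular a subsolution of \eqref{eq:HJG5} (the inequalities with $H_i$ off $\Gamma$ and $H_i^+$ on $\Gamma$ — exactly the two viscosity inequalities you invoke) and then appealing to the proofs of Lemmas 3.3.9--3.3.10 of \cite{Oudet2016_these}, which consist precisely of the mechanism you reconstruct: transfer of the subsolution property through the partial sup-convolution, the $e_0$-bound from Lemma~\ref{lem:12}, coercivity of $H_i$ in the $e_i$-direction supplied by Remark~\ref{rem:R}, and Ishii's technique for the one-sided Hamiltonian at $\Gamma$. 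Since you defer the same delicate boundary step to the same sources the paper cites, the two proofs coincide in substance; yours merely spells out the interior estimate that the paper leaves to the citation.
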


\begin{lem}\label{lem:Lip2}
	Under Assumptions $[A]$ and $[\tilde{A}3]$, let $ \bar{y} \in \Gamma $ and $ R>0 $ be as in Remark~\ref{rem:R}.
	We denote by $ Q_i $ the set $ \cP_i \cap B(\bar{y}, R) $. Let $ u_i : \cP_i \rightarrow \R$ be a bounded, USC viscosity subsolution of~\eqref{eq:HJmain} in $ Q_i $.  Then for all $ \alpha, p>0 $ sufficient small, if we set
	\begin{equation}\label{eq:Qalpha}
	Q_i^\alpha = \left\{ x\in Q_i : \text{dist} (x,\partial Q_i) > \alpha \left(4^{1/p} ||u_i||^{2/p}_{L^\infty (\cP_i)} - \alpha\right)^{1/2} \right\},
	\end{equation}
	the function $ u_i^\alpha $ defined in Definition~\ref{def: alpha} is Lipschitz continuous in $ Q_i^\alpha $.
	Moreover, there exists $ m: (0,\infty) \rightarrow (0,\infty) $ such that $ \lim_{\alpha \rightarrow 0} m(\alpha) =0 $ and $ (u_1^\alpha  - m(\alpha), \ldots, u_N^\alpha  - m(\alpha) , u_\Gamma)$ is a viscosity subsolution of~\eqref{eq:HJmain} in $ Q^\alpha:= \cup_i Q_i^\alpha $.
\end{lem}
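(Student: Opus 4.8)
The plan rests on one structural observation: the set $Q_i^\alpha$ is designed to lie at a positive distance from $\Gamma$, so that on $Q^\alpha$ the only equation of the system~\eqref{eq:HJmain} that has to be checked is the interior equation $\lambda u+H_i(x,\partial u)=0$. Writing $\phi_\alpha(s):=(s^2/\alpha^2+\alpha)^{p/2}$ and $\rho(\alpha):=\alpha(4^{1/p}\|u_i\|_{L^\infty(\cP_i)}^{2/p}-\alpha)^{1/2}$, Lemma~\ref{lem:11} tells us that the supremum defining $u_i^\alpha(x)$ is attained at some $\bar z^0$ with $|\bar z^0-x^0|\le\rho(\alpha)$ for $\alpha$ small. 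Hence, for $x\in Q_i^\alpha$, since $\mathrm{dist}(x,\partial Q_i)>\rho(\alpha)$ and $\Gamma\cap B(\bar y,R)\subset\partial Q_i$, an elementary computation gives $x^i>\rho(\alpha)>0$, so that $Q_i^\alpha\subset\cP_i\backslash\Gamma$ and the maximizing point $\bar w:=\bar z^0 e_0+x^i e_i$ stays in $Q_i\cap(\cP_i\backslash\Gamma)$. In particular $Q^\alpha\cap\Gamma=\emptyset$; this is exactly why $u_\Gamma$ may be left unchanged in the tuple, and why only the local subsolution property of $u_i$ on $Q_i$ will ever be invoked.

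For the Lipschitz assertion I would localize the proof of Lemma~\ref{lem:Lips1}, replacing the global subsolution hypothesis by its restriction to $Q_i$, which is legitimate by the previous paragraph. The tangential ($x^0$) Lipschitz bound is supplied by Lemma~\ref{lem:12}; the transverse ($e_i$) bound comes from the coercivity of $H_i$, which under the moderate controllability~\eqref{eq:750} and Remark~\ref{rem:R} reads $H_i(x,p)\ge\frac{\delta}{2}|p\cdot e_i|-M$ on $B(\Gamma,R)\cap\cP_i$. Because the coercivity constant $\delta/2$ does not depend on $\alpha$, the Lipschitz constant $L$ of $u_i^\alpha$ on $Q_i^\alpha$ can be chosen uniform in $\alpha$ for $\alpha$ small.

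The approximate subsolution property is then the classical sup-convolution transfer, performed only in the direction $e_0$. Let $\varphi\in C^1(\cP_i)$ be such that $u_i^\alpha-\varphi$ has a local maximum at $\bar x\in Q_i^\alpha$, and let $\bar z^0$ attain the sup. Fixing $w^i=x^i$ and $w^0=z^0$ while varying $x^0$ shows that $u_i-\Psi$ has a local maximum at $\bar w$, where $\Psi(w):=\inf_{x^0}\{\phi_\alpha(x^0-w^0)+\varphi(x^0 e_0+w^i e_i)\}$; since $\phi_\alpha$ depends only on $x^0-z^0$, a one-line envelope computation gives $\partial\Psi(\bar w)=\partial\varphi(\bar x)$. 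Feeding this into the interior subsolution inequality for $u_i$ at $\bar w\in\cP_i\backslash\Gamma$ and using $u_i(\bar w)=u_i^\alpha(\bar x)+\phi_\alpha(\bar x^0-\bar z^0)\ge u_i^\alpha(\bar x)$ yields $\lambda u_i^\alpha(\bar x)+H_i(\bar w,\partial\varphi(\bar x))\le 0$. I would finish by moving the base point of the Hamiltonian from $\bar w$ to $\bar x$ through the estimate $|H_i(\bar x,p)-H_i(\bar w,p)|\le C|\bar x-\bar w|(1+|p|)$ coming from $[A1]$ and $[A2]$, with $|\bar x-\bar w|\le\rho(\alpha)$ and $|\partial\varphi(\bar x)|\le L$ (the latter because $u_i^\alpha$ is $L$-Lipschitz). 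This gives $\lambda u_i^\alpha(\bar x)+H_i(\bar x,\partial\varphi(\bar x))\le C\rho(\alpha)(1+L)$, so that the choice $m(\alpha):=\lambda^{-1}C\rho(\alpha)(1+L)$ turns $u_i^\alpha-m(\alpha)$ into an exact subsolution of the interior equation on $Q_i^\alpha$, with $m(\alpha)\to 0$ as $\alpha\to 0$.

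The step I expect to demand the most care is precisely this uniformity: $m(\alpha)$ tends to $0$ only if the Lipschitz constant $L$ entering $C\rho(\alpha)(1+L)$ stays bounded as $\alpha\to 0$, even though $Q_i^\alpha$ slides toward $\Gamma$ in that limit. The transverse bound is uniform thanks to the $\alpha$-independent coercivity constant, and coercivity is available all the way up to $\Gamma$ only because $R$ in Remark~\ref{rem:R} is fixed; what must be checked with some attention is that the tangential Lipschitz constant of Lemma~\ref{lem:12} remains bounded on the relevant compact set as $\alpha\to 0$. Once this bookkeeping is in place, the two conclusions of the lemma follow.
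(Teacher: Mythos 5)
Your proposal fails at its very first structural step: the claim that $Q_i^\alpha\cap\Gamma=\emptyset$. In this lemma $Q_i=\cP_i\cap B(\bar y,R)$ is a half-disc of the half-plane $\cP_i$, and $\partial Q_i$ is its boundary \emph{relative to} $\cP_i$, i.e.\ the spherical part $\cP_i\cap\partial B(\bar y,R)$; the flat segment $\Gamma\cap B(\bar y,R)$ consists of interior points of $Q_i$ in the topology of $\cP_i$. Hence $Q_i^\alpha$ defined by \eqref{eq:Qalpha} contains a whole sub-segment of $\Gamma$, and the actual content of the lemma is exactly what you discard: that the $e_0$-directional sup-convolution preserves, up to the error $m(\alpha)$, the subsolution conditions of the system \eqref{eq:HJmain} \emph{on} $\Gamma$, namely $\lambda u_i+\max\{H_i^+(x,\partial u_i),-\lambda u_\Gamma\}\le 0$ and the switching equation for $u_\Gamma$. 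This reading is forced both by the statement (carrying $u_\Gamma$ in the tuple and invoking the full system would otherwise be vacuous) and by the way the lemma is used immediately afterwards: in the local comparison principle, Case B takes a maximum point $\bar x_\Gamma\in(Q_i^\alpha\setminus\partial Q_i^\alpha)\cap\Gamma$ and works with $M_\Gamma=\max_{Q_i^\alpha\cap\Gamma}\{u_\Gamma-w_\Gamma\}$, while Case A runs the arguments of Theorem~\ref{thm3}, which need the viscosity inequalities for $u_i^\alpha-m(\alpha)$ at points of $\Gamma$ (both the $H_i^+$-inequality and the bound $u_i^\alpha-m(\alpha)\le u_\Gamma$ there). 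Note also that if one really could stay at positive distance from $\Gamma$, there would be no reason to convolve only in the $e_0$-direction: the standard full sup-convolution would suffice. The directional convolution is used precisely because translations along $e_0$ map $\Gamma$ into $\Gamma$ and $\cP_i\setminus\Gamma$ into itself, so that when the touching point of $u_i^\alpha-\varphi$ lies on $\Gamma$, the point produced by the envelope argument also lies on $\Gamma$ and the constrained Hamiltonian $H_i^+$ can be invoked there.

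What a complete proof requires, and what the paper does, is the following: first observe that a subsolution of the system \eqref{eq:HJmain} is a subsolution of the scalar problem \eqref{eq:HJG5}, with $H_i$ in $\cP_i\setminus\Gamma$ and $H_i^+$ on $\Gamma$; then apply the proofs of \cite[Lemmas 3.3.9 and 3.3.10]{Oudet2016_these}, which contain your interior transfer computation \emph{and} the boundary case $\bar x\in\Gamma$, giving both the Lipschitz bound in $Q_i^\alpha$ (up to $\Gamma$) and the inequality $\lambda(u_i^\alpha-m(\alpha))+H_i^+\le 0$ on $\Gamma\cap Q_i^\alpha$. Beyond that scalar reduction, the coupled conditions of \eqref{eq:HJmain} must be verified for the new tuple: $u_i^\alpha-m(\alpha)\le u_\Gamma$ on $\Gamma\cap Q^\alpha$ (which follows from $u_i\le u_\Gamma$ on $\Gamma$, the localization of the maximizing point given by Lemma~\ref{lem:11}, and the local uniform continuity of $u_\Gamma$, all absorbed into $m(\alpha)$), together with the corresponding adjustment of the term $\min_i\{u_i+c_i\}$ in the $u_\Gamma$-equation. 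None of this appears in your proposal. Your interior argument and your concern about uniformity of the Lipschitz constant are sensible, but they treat only the easy part; as a secondary point, your coercivity inequality is also too strong as stated, since moderate controllability only gives $H_i(x,p)\ge \frac{\delta}{2}|p\cdot e_i|-M|p\cdot e_0|-M$, so the transverse Lipschitz estimate can only be extracted \emph{after} the tangential one provided by the sup-convolution, which is indeed the order of Oudet's argument.
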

Lemma~\ref{lem:11} and Lemma~\ref{lem:12} on sup-convolution are well known result,  but we give a short proof for completeness.
First of all, it is easy to see that the supremum in Definition~\ref{def: alpha} is achieved at some point $ z^0 \in \R $.  We have
\[ 
u_i^\alpha (x) =  u( z^0 e_0 + x^i e_i ) - \left( \dfrac{|z^0 - x^0|^2}{\alpha^2} + \alpha^2 \right)^\frac{p}{2} \ge u( x^0 e_0 + x^i e_i ) + \alpha^\frac{p}{2},
\]
and because of the boundedness of $ u $, we get~\eqref{121}.
Next, let $ K $ be a compact subset of $ \cP_i $, for $ x,y\in K $, from the definition of sup-convolution, we have

\[ u_i^\alpha (x) - u_i^\alpha (y) \le \sup_{z^0\in \R} \left\{ - \left( \dfrac{|z^0 - x^0|^2}{\alpha^2} + \alpha \right)^\frac{p}{2} + \left( \dfrac{|z^0 - y^0|^2}{\alpha^2} + \alpha \right)^\frac{p}{2}  \right\}.
 \]
By the mean-value theorem and the fact that $ |z^0 - x^0|/\alpha^2 $, $  |z^0 - y^0|/\alpha^2$ are bounded, there exists a constant $ C_K>0 $ such that $ 	|u^\alpha_i (x) - u^\alpha_i (y)| \le C_K |x^0 - y^0| $. The proof of Lemma~\ref{lem:12} is complete.

Finally, if $ u_i $ is a viscosity subsolution of~\eqref{eq:HJmain}, then it is also a viscosity subsolution of the following equation
\begin{equation}
\begin{cases}
\lambda u_{i} (x) +H_{i}\left(x, \partial u_i (x) \right) \le0 & \text{if } x\in \cP_{i}\backslash \Gamma,\\
\lambda u_{i} (x)+ H_{i}^{+}\left(x,\partial u_i (x) \right)  \le 0 & \text{if } x\in \Gamma.
\end{cases}\label{eq:HJG5}
\end{equation}
We may now apply the proofs of~\cite[Lemmas 3.3.9 and 3.3.10]{Oudet2016_these} to conclude Lemmas~\ref{lem:Lips1} and \ref{lem:Lip2}.

We are ready to prove a local comparison principle.

\begin{thm}
	Under Assumptions $[A]$ and $[\tilde{A}3]$, let $ \cU=(u_1,\ldots,u_N, u_\Gamma)$ be a bounded, USC viscosity subsolution of~\eqref{eq:HJmain} and $ \cW= (w_1,\ldots,w_N,w_\Gamma)$ be a bounded, LSC viscosity supersolution of~\eqref{eq:HJmain}.
	Let $ R>0 $ as in Remark~\ref{rem:R}, $ i\in \{1, \ldots,N \} $ and $ \bar{y} \in \Gamma $ be fixed.
	Then, if we set $ Q_i = B(\bar{y}, R) \cap \cP_i$, we have
	\begin{align}\label{eq:local compar}
	|| (u_i- w_i)_+||_{L^\infty (Q_i)} &\le || (u_i- w_i)_+||_{L^\infty (\partial Q_i)}, \\
	 || (u_\Gamma- w_\Gamma)_+||_{L^\infty (Q_i\cap \Gamma)} &\le || (u_\Gamma- w_\Gamma)_+||_{L^\infty (\partial (Q_i\cap \Gamma))}.\label{eq:local compar1}
	\end{align}
	
\end{thm}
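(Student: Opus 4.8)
The plan is to transfer the first proof of Theorem~\ref{thm3} to the present setting. The only ingredient of that proof which is unavailable under $[\tilde A3]$ is the Lipschitz continuity of the subsolution near $\Gamma$ (there supplied by Lemma~\ref{lem:continuity-sub}); so the idea is to run the entire doubling-of-variables machinery not on $\cU$ itself but on its sup-convolutions $u_i^\alpha$, which \emph{are} Lipschitz near $\Gamma$ by Lemma~\ref{lem:Lip2}, and then to let the regularization parameter $\alpha\to0$.

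First I would fix $\alpha,p>0$ small enough that Lemma~\ref{lem:Lip2} applies for the given $\bar y$ and $R$. Writing $\tilde u_j:=u_j^\alpha-m(\alpha)$, that lemma gives that each $\tilde u_j$ is Lipschitz continuous on $Q_j^\alpha$ and that $(\tilde u_1,\ldots,\tilde u_N,u_\Gamma)$ is a viscosity subsolution of~\eqref{eq:HJmain} on $Q^\alpha=\cup_j Q_j^\alpha$. Since the $u_\Gamma$-equation couples all the half-planes, I would prove the local comparison \emph{jointly} for all components: that the quantities $\sup_{Q_j^\alpha}(\tilde u_j-w_j)$ and $\sup_{Q^\alpha\cap\Gamma}(u_\Gamma-w_\Gamma)$ are bounded by the corresponding suprema over the (spherical) boundaries $\partial Q_j^\alpha$ and $\partial(Q^\alpha\cap\Gamma)$. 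Because $w_j$ is LSC, $w_\Gamma$ is continuous, and $\tilde u_j,u_\Gamma$ are USC on the relevant compacta, these maxima are attained. Arguing by contradiction, a failure means the largest of these excesses is strictly positive and is reached at an interior or $\Gamma$-point $\bar x$. I then split, exactly as in Theorem~\ref{thm3}, into \textbf{Case A} (the maximum is some $\sup_{Q_j^\alpha}(\tilde u_j-w_j)$) and \textbf{Case B} (it is the $\Gamma$-maximum), and I reuse verbatim the doubling functions $\Psi_{j,\varepsilon}$ of~\eqref{Psi} (with $u_j^\mu$ replaced by $\tilde u_j$, and $L$ now the Lipschitz constant of $\tilde u_j$ on $Q_j^\alpha$) and $\Phi_\varepsilon$, together with the localizing term $-|x-\bar x|^2$ already built into them. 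The Lipschitz bound from Lemma~\ref{lem:Lip2} is precisely what makes the shift $\delta(\varepsilon)=(L+1)\varepsilon$ effective, so the step ``$x_\varepsilon\notin\Gamma$'' and the subcase analysis (A.1--A.3 and B.1--B.2) go through unchanged and yield the desired contradiction. Specializing the resulting joint inequalities to the fixed index $i$ gives, for the regularized family,
\[
||(\tilde u_i-w_i)_+||_{L^\infty(Q_i^\alpha)}\le ||(\tilde u_i-w_i)_+||_{L^\infty(\partial Q_i^\alpha)},\qquad ||(u_\Gamma-w_\Gamma)_+||_{L^\infty(Q^\alpha\cap\Gamma)}\le ||(u_\Gamma-w_\Gamma)_+||_{L^\infty(\partial(Q^\alpha\cap\Gamma))}.
\]

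Finally I would remove the regularization by letting $\alpha\to0$ with $p$ fixed. Since $u_i$ is bounded and USC, Lemma~\ref{lem:11} controls the location of the maximizing $z^0$, and one checks $u_i^\alpha\to u_i$ pointwise (from above), while $m(\alpha)\to0$ by Lemma~\ref{lem:Lip2}; hence $\tilde u_i\to u_i$. Moreover $Q_i^\alpha$ exhausts the interior of $Q_i$ and $\partial Q_i^\alpha$ approaches the spherical boundary $\partial Q_i$. Passing to the limit in the displayed inequalities, and using the upper semicontinuity of $u_i$ together with the lower semicontinuity of $w_i$ to bound $\limsup_{\alpha\to0}||(\tilde u_i-w_i)_+||_{L^\infty(\partial Q_i^\alpha)}$ by $||(u_i-w_i)_+||_{L^\infty(\partial Q_i)}$, yields~\eqref{eq:local compar}; the same passage on the $\Gamma$-component gives~\eqref{eq:local compar1}.

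The main obstacle is this last limiting step. The domains $Q_i^\alpha$ depend on $\alpha$ and their boundaries $\partial Q_i^\alpha$ sit strictly inside $Q_i$, so one must argue that the supremum of $(\tilde u_i-w_i)_+$ over the \emph{inner} boundary $\partial Q_i^\alpha$ does not, in the limit, exceed its supremum over the \emph{true} boundary $\partial Q_i$. This requires combining the pointwise convergence $u_i^\alpha\to u_i$ with the semicontinuity of $u_i$ and $w_i$ as the inner boundary collapses onto $\partial Q_i$, and with enough uniformity on the compact boundary pieces; everything else is a faithful transcription of the doubling argument of Theorem~\ref{thm3}, now legitimate precisely because the sup-convolution restores the Lipschitz control near $\Gamma$ that the moderate controllability condition $[\tilde A3]$ fails to provide on its own.
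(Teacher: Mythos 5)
Your proposal is correct and follows essentially the same route as the paper's own proof: regularize the subsolution by sup-convolution via Lemma~\ref{lem:Lip2}, transfer the doubling argument of Theorem~\ref{thm3} to the Lipschitz functions $u_i^\alpha - m(\alpha)$ on $Q^\alpha$ (Cases A and B), and then let $\alpha\to 0$, using the pointwise convergence of $u_i^\alpha$ together with the upper semicontinuity of $u_i$ and lower semicontinuity of $w_i$ to control the collapse of the inner boundary $\partial Q_i^\alpha$ onto $\partial Q_i$. The ``main obstacle'' you flag is exactly the step the paper resolves by extracting a convergent sequence of boundary maximizers $x_\alpha\to\tilde{x}$ and bounding the limit by $(u_i(\tilde{x})-w_i(\tilde{x}))_+$.
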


\begin{proof}
	Take $ \alpha,p>0 $ sufficient small so that Lemma~\ref{lem:Lip2} can be applied. From Lemma~\ref{lem:Lip2}, we know the fact that $ u_i^\alpha $ is Lipschitz continuous in $ Q_\alpha $ with a Lipschitz constant $ L_\alpha $ and that there exists $ m : (0,\infty) \rightarrow (0,\infty) $ such that $ \lim_{\alpha \rightarrow 0} m (\alpha)=0   $ and $(u_1^\alpha - m,\ldots,u_N^\alpha - m, u_\Gamma)  $ is a viscosity subsolution of~\eqref{eq:HJmain}. 
	Set $ \tu_j:= u_j^\alpha - m $ for all $ j\in \{1,\ldots,N\} $. Let us prove that
	\begin{align}\label{eq:local compar alpha}
	|| (\tu_i - w_i)_+||_{L^\infty (Q_\alpha)} &\le || (\tu_i - w_i)_+||_{L^\infty (\partial Q_\alpha)},\quad \text{for all } i\in \{1,\ldots,N\}\\
	|| (u_\Gamma- w_\Gamma)_+||_{L^\infty (Q^\alpha\cap \Gamma)} &\le || (u_\Gamma- w_\Gamma)_+||_{L^\infty (\partial (Q^\alpha\cap \Gamma))}. \label{eq:local compar alpha1}
	\end{align}
	We argue by contradiction by considering the two following cases 
	\begin{enumerate}
		\item [Case A] If there exist $ i\in \{1,\ldots,N\} $ and $ \bar{x}_i \in Q_i^\alpha \backslash \partial Q_i^\alpha $ such that 
	\begin{equation*}
		M_i:= \max_{Q_i^\alpha} \{\tu_i - w_i\} =  \tu_i (\bar{x}_i)- w_i(\bar{x}_i) > \max \{M_\Gamma, 0\},
	\end{equation*}
	where $ M_\Gamma := \max_{Q_i^\alpha \cap \Gamma} \{u_\Gamma - w_\Gamma\} $.  Now we can apply the similar arguments as in Case A in the proof of Theorem~\ref{thm3} to obtain that either $ M_i \le 0 $ or $ M_i \le M_\Gamma $, which leads us to a contradiction.
	\item [Case B] If there exists $ \bar{x}_\Gamma \in (Q_i^\alpha \backslash \partial Q_i^\alpha) \cap \Gamma $ such that
	\[ 
		M_\Gamma =  \max_{Q_i^\alpha \cap \Gamma} \{u_\Gamma - w_\Gamma\} =  u_\Gamma (\bar{x}_\Gamma)- w_i(\bar{x}_\Gamma) \ge  \max_{j\in \{1,\ldots,N\}} M_j \text{ and } M_\Gamma > 0.
 	 \]
 	 Apply the similar arguments as in Case B in the proof of Theorem~\ref{thm3} to obtain that either $ M_\Gamma \le 0 $ (which leads to a contradiction) or there exists $ k\in \{1,\ldots,N\} $ such that 
 	  \begin{equation}\label{017}
 	 w_\Gamma (\bar{x}_\Gamma) \ge \min_{j} \{w_j(\bar{x}_\Gamma) +c_j (\bar{x}_\Gamma )\} =  w_k(\bar{x}_\Gamma) +c_k (\bar{x}_\Gamma ), 
  \end{equation} which leads to $  M_k = \tu_k (\bar{x}_\Gamma) - w_k (\bar{x}_\Gamma) = M_\Gamma >0 $.
  We now repeat Case A and note that thanks to~\eqref{017}, the case $ w_k (\bar{x}_\Gamma ) \ge \tu_k (\bar{x}_\Gamma)  $ does not occur, we finally obtain that $ M_k \le 0  $ which contradicts~\eqref{017}. 
	\end{enumerate}
We finish the proof of ~\eqref{eq:local compar alpha} and \eqref{eq:local compar alpha1}.

In order to prove~\eqref{eq:local compar} and \eqref{eq:local compar1}, we have to pass to the limit  as $ \alpha $ tends to $ 0 $ in~\eqref{eq:local compar alpha} and \eqref{eq:local compar alpha1}, respectively. Let $ \alpha_0>0 $ and $ y \in Q_{\alpha_0} $ be fixed.  For all $ 0<\alpha \le \alpha_0 $, one has
\begin{equation}\label{758}
	 (u_i^\alpha (y) -m(\alpha)  - w_i (y))_+ \le || (u_i^\alpha - m(\alpha) - w_i)_+||_{L^\infty (\partial Q_\alpha)}.
\end{equation}
We claim that $ \limsup_{\alpha\rightarrow0}  || (u_i^\alpha - m(\alpha) - w_i)_+||_{L^\infty (\partial Q_\alpha)} \le || (u_i^\alpha - m(\alpha) - w_i)_+||_{L^\infty (\partial Q)}$.
Indeed, there exists $ x_\alpha \in \partial Q_\alpha $ such that
\[ 
(u_i^\alpha (x_\alpha) -m(\alpha)  - w_i (x_\alpha))_+ = || (u_i^\alpha - m(\alpha) - w_i)_+||_{L^\infty (\partial Q_\alpha)}.
 \]
Thus, for any subsequence such that $  || (u_i^\alpha - m(\alpha) - w_i)_+||_{L^\infty (\partial Q_\alpha)} $ converges to a limit $ \ell $ as $ \alpha \rightarrow 0 $, we can assume that $ x_\alpha \rightarrow \tilde{x} $ as $ \alpha\rightarrow 0 $.
Hence, from the properties of the sup-convolution, the fact that $ u_i $ is USC and $ w_i $ is LSC, we can check that
$
\ell \le ( u_i (\tilde{x}) - w_i (\tilde{x}) )_+ \le || (u_i- w_i)_+||_{L^\infty (\partial Q_i)}.
$
The claim is proved and therefore, by the pointwise convergence of $ u_i^\alpha $ to $ u_i $, passing to the $\limsup $ as $ \alpha\rightarrow 0 $ in~\eqref{758}, we have
\[ 
( u_i (y) - w_i (y) )_+ \le || (u_i- w_i)_+||_{L^\infty (\partial Q_i)}, \quad \text{for all } y\in Q_{\alpha_0}.
\]
The above inequality holds for any $ y\in Q_{\alpha_0} $ with $ \alpha_0 $ arbitrarily chosen, then we obtain~\eqref{eq:local compar}. Finally, \eqref{eq:local compar1} is obtained by using the similarly arguments, the proof is complete.
\end{proof}

Thanks to the local comparison principle, apply the similar arguments in \cite[Theorem 3.3.4]{Oudet2016_these}, we now have the global comparison principle which leads us to the uniqueness of~\eqref{eq:HJmain}. 

\begin{thm}
	Under Assumptions $ [A] $ and $ [\tilde{A}3] $, let $ \cU=(u_1,\ldots,u_N,u_\Gamma)\ $ be a bounded,USC viscosity subsolution of~\eqref{eq:HJmain} and  $\cW= (w_1,\ldots,w_N, w_\Gamma) $ be a bounded,USC viscosity supersolution of~\eqref{eq:HJmain}. Then $ u_i \le w_i $ on $ \cP_i $ for all $ i\in \{1,\ldots,N\} $ and $ u_\Gamma \le w_\Gamma $  on $ \Gamma $.
\end{thm}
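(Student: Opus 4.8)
The statement is the global comparison principle under $[\tilde A3]$, and its crucial ingredient---the local comparison principle \eqref{eq:local compar}--\eqref{eq:local compar1} on the half-balls $Q_i=B(\bar y,R)\cap\cP_i$---has just been established. The plan is to bootstrap from this local estimate to a global one exactly as in \cite[Theorem 3.3.4]{Oudet2016_these}. The only structural difference with respect to the strong-controllability case is that the Lipschitz regularity of subsolutions near $\Gamma$ is no longer available and has been replaced by the sup-convolution regularization of Lemmas~\ref{lem:Lips1} and~\ref{lem:Lip2}, which is already built into the local statement.

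I would first reduce to the case in which the relevant suprema are attained at finite points. As in the first proof of Theorem~\ref{thm3}, the functions $\phi_j(x)=-|x|^2-\bar M$ and $\phi_\Gamma(z)=-|z|^2-\bar M$ are, for $\bar M$ large enough, viscosity subsolutions of~\eqref{eq:HJmain}; hence for $\mu\in(0,1)$ close to $1$ the convex combinations $u_j^\mu:=\mu u_j+(1-\mu)\phi_j$ and $u_\Gamma^\mu:=\mu u_\Gamma+(1-\mu)\phi_\Gamma$ again form a subsolution. Since $u_j^\mu-w_j\to-\infty$ and $u_\Gamma^\mu-w_\Gamma\to-\infty$ as $|x|\to\infty$, the maxima
\[
M_j^\mu:=\max_{\cP_j}\,(u_j^\mu-w_j),\qquad M_\Gamma^\mu:=\max_\Gamma\,(u_\Gamma^\mu-w_\Gamma)
\]
are attained at finite points. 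It then suffices to prove $M_j^\mu\le0$ for all $j$ and $M_\Gamma^\mu\le0$ and to let $\mu\to1$, which recovers $u_i\le w_i$ on $\cP_i$ and $u_\Gamma\le w_\Gamma$ on $\Gamma$.

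Arguing by contradiction, I would suppose $M:=\max\{\max_j M_j^\mu,\,M_\Gamma^\mu\}>0$ and split according to where this maximum is realized. If it is realized at a point of some $\cP_{i_0}$ lying at positive distance from $\Gamma$, then on a small ball avoiding $\Gamma$ both $u_{i_0}^\mu$ and $w_{i_0}$ are sub- and supersolutions of the single equation $\lambda u+H_{i_0}(x,\partial u)=0$, and the classical doubling-of-variables argument---using $\lambda>0$ together with the structure estimate $|H_{i_0}(x,p)-H_{i_0}(y,p)|\le C_{i_0}|x-y|(1+|p|)$ recorded in Case~A.2 of the first proof of Theorem~\ref{thm3}---yields $\lambda M\le0$, a contradiction. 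If instead the maximum is realized on $\Gamma$ (or in $B(\Gamma,R)\cap\cP_{i_0}$), I would apply the local comparison principle \eqref{eq:local compar}--\eqref{eq:local compar1} on the half-ball $Q_{i_0}=B(\bar y,R)\cap\cP_{i_0}$, with $\bar y$ the projection onto $\Gamma$ of the maximizer: its internal dichotomy (the analogues of Case~A and Case~B of Theorem~\ref{thm3}, including the coupling through $\min_k\{w_k+c_k\}$ and the switching inequality for $u_{i_0}^\mu$ against $u_\Gamma^\mu$) again drives $M\le0$.

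The main obstacle is precisely the passage from the local, boundary-controlled estimate \eqref{eq:local compar}--\eqref{eq:local compar1} to a bound valid on the whole, \emph{unbounded} junction: a priori a positive maximum of $u-w$ could either escape to infinity along the line $\Gamma$ or be relayed from one half-ball to a neighbouring one without ever producing a contradiction. Both difficulties are resolved by the quadratic penalization $-(1-\mu)|x|^2$, which is the very role of the strict subsolutions $\phi_j,\phi_\Gamma$: it confines the maximizer to a compact set and makes the maximum sharp enough that, at the finite point where it is attained, either the interior comparison or the local interface comparison applies directly, leaving no room for a relaying argument. Once $M\le0$ is obtained for every $\mu$ close to $1$, letting $\mu\to1$ finishes the proof, exactly along the lines of \cite[Theorem 3.3.4]{Oudet2016_these}.
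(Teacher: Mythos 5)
Your skeleton is indeed the one the paper intends: the paper gives no self-contained argument for this theorem, it simply combines the local comparison principle with the globalization scheme of \cite[Theorem 3.3.4]{Oudet2016_these}, i.e.\ exactly the strict-subsolution penalization plus a dichotomy between maximizers away from $\Gamma$ (classical doubling, which, as you say, yields $\lambda M\le 0$) and maximizers on $\Gamma$, where the local estimate must intervene. Your reduction to attained maxima via $u_j^\mu=\mu u_j+(1-\mu)\phi_j$ is also correct; note moreover that, by convexity of $H_j$, $H_j^+$ and $H_\Gamma$ in $p$, this penalization produces a subsolution with a \emph{uniform strictness margin} of order $1-\mu$, and it is this margin --- not the mere compactness of the maximizer set --- that the globalization must spend.

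The genuine gap is exactly at the ``relaying'' step that you name and then dismiss. If the penalized maximum $M>0$ is attained at $\bar x\in Q_{i_0}$, then \eqref{eq:local compar} only gives $M\le \| (u_{i_0}^\mu-w_{i_0})_+\|_{L^\infty(\partial Q_{i_0})}\le M$, i.e.\ the maximum is \emph{also} attained on $\partial Q_{i_0}$; this is not a contradiction. Contrary to your claim, the quadratic penalization does not exclude this: it confines the maximizer set to a compact set but does not make the maximum strict or single-pointed, and nothing prevents that set from meeting $\partial Q_{i_0}$ (one can check, for instance, that a maximizer set equal to a segment of $\Gamma$ of length $R$ is compatible with every black-box application of the local estimate, for balls of all radii $\le R$, so no relaying argument alone can conclude). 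To close the proof one must spend the $(1-\mu)$-margin: subtract a further perturbation $\epsilon\psi$, with $\psi$ smooth, $\psi(\bar x)=0$, $\psi\ge 1$ on $\partial Q_{i_0}$ (resp.\ on $\partial(Q_{i_0}\cap\Gamma)$ when the maximum is carried by $u_\Gamma^\mu-w_\Gamma$), $\epsilon$ small compared with $1-\mu$, and --- a point specific to this coupled system --- with the perturbations of all components agreeing on $\Gamma$: otherwise lowering the $u_i$'s alone increases the term $-\lambda\min_i\{u_i+c_i\}$ and destroys the subsolution inequality for the $\Gamma$-component. The perturbed family is then still a subsolution, its difference with the corresponding component of $\cW$ equals $M$ at $\bar x$ but is at most $M-\epsilon$ on the relevant boundary, and this does contradict \eqref{eq:local compar}--\eqref{eq:local compar1}. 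The alternative you hint at --- re-running the Case~A/Case~B dichotomy of Theorem~\ref{thm3} directly at $\bar x$ --- is not available as stated under $[\tilde{A}3]$: it requires Lipschitz regularity of the subsolution near $\Gamma$, hence the sup-convolution, which is a subsolution only in the shrunken set $Q^\alpha$, so one would have to redo the whole $\alpha\to 0$ limiting argument, i.e.\ reprove the local theorem rather than use it. Without one of these devices your argument does not close.
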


\begin{cor}
The value function $ \ccV $ satisfies $ (v_1,\ldots,v_N,\ccV|_\Gamma) $ is a unique viscosity solution of~\eqref{eq:HJmain} and $ v_i $ is continuous on $ \cP_i $ for all $ i\in \{1,\ldots,N\} $.
\end{cor}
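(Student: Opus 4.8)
The plan is to derive both assertions from the global comparison principle just proved, combined with the fact---established earlier in this section---that $V=(v_1,\ldots,v_N,\ccV|_\Gamma)$ is a discontinuous viscosity solution of~\eqref{eq:HJmain}. By the very definition of a discontinuous solution, this means that $V^\star=(v_1^\star,\ldots,v_N^\star,\ccV|_\Gamma)$ is a viscosity subsolution and $V_\star=(v_{1\star},\ldots,v_{N\star},\ccV|_\Gamma)$ is a viscosity supersolution of~\eqref{eq:HJmain}. Before invoking comparison I would check the hypotheses: the components $v_i^\star,v_{i\star}$ are bounded because $\ell$ and the $c_i$ are bounded and $\lambda>0$, they are respectively USC and LSC by construction, and the interface component $\ccV|_\Gamma$ is continuous by Lemma~\ref{lem:control}(b), so it is admissible both as the $\Gamma$-entry of a subsolution and of a supersolution.

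First I would settle continuity. Applying the comparison principle to the subsolution $V^\star$ and the supersolution $V_\star$ yields $v_i^\star\le v_{i\star}$ on $\cP_i$ for every $i$, the interface inequality being the trivial $\ccV|_\Gamma\le\ccV|_\Gamma$. Since the reverse inequality $v_{i\star}\le v_i^\star$ holds everywhere by the definition of the semicontinuous envelopes, we obtain $v_i^\star=v_{i\star}$ on all of $\cP_i$; this is precisely the statement that $v_i$ has a continuous representative on $\cP_i$, interface included, which proves the continuity claim.

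For uniqueness, let $\tilde V=(\tilde v_1,\ldots,\tilde v_N,\tilde u_\Gamma)$ be any discontinuous viscosity solution of~\eqref{eq:HJmain}, so that $\tilde V^\star$ is a subsolution and $\tilde V_\star$ a supersolution. I would apply the comparison principle twice: to the pair $(V^\star,\tilde V_\star)$, giving $v_i^\star\le\tilde v_{i\star}$ and $\ccV|_\Gamma\le\tilde u_\Gamma$, and to the pair $(\tilde V^\star,V_\star)$, giving $\tilde v_i^\star\le v_{i\star}$ and $\tilde u_\Gamma\le\ccV|_\Gamma$. The two interface inequalities already force $\ccV|_\Gamma=\tilde u_\Gamma$, while for the bulk components the two estimates chain with the envelope inequalities into
\[
v_i^\star\le\tilde v_{i\star}\le\tilde v_i^\star\le v_{i\star}\le v_i^\star,
\]
so that all five quantities coincide. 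In particular $v_i^\star=\tilde v_i^\star$ and $\tilde v_{i\star}=\tilde v_i^\star$, whence $\tilde v_i$ is continuous and equal to $v_i$ on $\cP_i$. Therefore $V=\tilde V$, and $V$ is the unique solution.

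I do not anticipate a genuine difficulty, since the whole argument is a bootstrap off the comparison principle. The only delicate points are purely formal: verifying the boundedness and semicontinuity needed to invoke the comparison theorem (dealt with above) and keeping careful track of which envelope is compared with which, so that the sandwich of inequalities closes up correctly.
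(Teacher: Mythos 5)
Your proof is correct and is exactly the argument the paper intends: the corollary is stated as an immediate consequence of the global comparison principle together with the preceding theorem that $(v_1,\ldots,v_N,\ccV|_\Gamma)$ is a discontinuous viscosity solution of~\eqref{eq:HJmain}, and your envelope sandwich $v_i^\star \le v_{i\star} \le v_i^\star$ for continuity, plus the double application of comparison to $(V^\star,\tilde V_\star)$ and $(\tilde V^\star, V_\star)$ for uniqueness, spells out precisely those details. One cosmetic remark: the boundedness of $v_i^\star$ and $v_{i\star}$ should not be attributed to boundedness of the $c_i$ (Assumption $[A1]$ only gives a positive lower bound), but rather to the bound $|\ccV|\le M/\lambda$, which holds because admissible trajectories that never enter a half-plane from $\Gamma$ pay no entry cost.
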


\section{A more general optimal control problem}\label{sec7}
In this section, we generalize the control problem studied in Section \ref{A3-cond} by allowing some of the entry costs functions to be zero. The situation can be seen as intermediary between the one studied in~\cite{Oudet2016_these} when all the entry costs are zero, and that studied previously under the strong controllability condition $[A3]$, in this work when all the entry costs function are positive. This generalization holds under the moderate controllability condition 
$[\tilde{A}3]$, but we omit the details. 
Accordingly, every results presented below will mainly be obtained by combining the arguments proposed above with those used in~\cite{Oudet2016_these}. Hence, we will present the results without the proofs.

Specifically, we consider the optimal control problems with non-negative entry cost $ \zC = \{\zc_1, \ldots,\zc_m, \zc_{m+1}, \ldots \zc_N \}$, where $ \zc_i \equiv 0 $ if $ i\le m $ and $ \zc_i(x) >0 $ for all $ x\in \Gamma $ if $ i>m $, keeping all the assumptions and definitions of Section~\ref{sec2} unchanged. The value function associated to $ \zC $ is denoted by $ \zV $. Similarly to Lemma~\ref{lem:continuity}, $ \zV|_{\cP_i \backslash \Gamma}  $ is continuous and Lipschitz continuous in the neighborhood of $ \Gamma $. Hence, we can define $ \zv_i $ as follows 
\[
\zv_i(x) = 
\begin{cases}
\zV(x), & \text{if } x\in \cP_i \backslash \Gamma,\\
\ds \lim_{\cP_i \backslash \Gamma \ni z \rightarrow x} \zV(z), & \text{if } x\in \Gamma.
\end{cases}
\]
Additionally, it is clear that for all $ i,j\le m $, $ \zv_i(x) = \zv_j (x) $ for all $ x\in \Gamma $, i.e. $ \zV|_{\cup_{i\le m} \cP_i}  $ is a continuous function which will be noted $ \zV_c $ from now on.

Combining the arguments in \cite{Oudet2016_these} and in Section~\ref{sec3} leads us to the following lemma.
\begin{lem}
	The value function $ \zV $ satisfies
	\[
	 \max_{m<i\le N } \{\zv_i (x)\} \le \zV|_\Gamma (x) = \zV_c (x) \le  \min_{ m<i\le N } \{ \zv_i (x) + \zc_i (x)  \}, \quad  x\in \Gamma.
	\]

\end{lem}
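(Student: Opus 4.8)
The plan is to treat separately the three relations asserted by the lemma: the central equality $\zV|_\Gamma(x)=\zV_c(x)$ and the two outer inequalities, which only involve the indices $i$ with $m<i\le N$. The two outer bounds can be established exactly as in Lemma~\ref{lem2.1} (equivalently, as in parts (a) and (b) of Lemma~\ref{lem:weak-compare}), the only adaptation being that under $[\tilde{A}3]$ the connection between points near $\Gamma$ and points on $\Gamma$ is furnished by Lemma~\ref{lem:control} rather than by Lemma~\ref{lem:strong control}, and that since $\zV|_{\cP_i\backslash\Gamma}$ is already continuous and Lipschitz near $\Gamma$ we may dispense with the semicontinuous envelopes used in Lemma~\ref{lem:weak-compare}. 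The genuinely new ingredient is the equality $\zV|_\Gamma=\zV_c$, which encodes the fact that $\Gamma\cup\bigcup_{i\le m}\cP_i$ behaves as a single cost-free region.

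For the lower bound I would fix $i$ with $m<i\le N$ and let $z\in(\cP_i\backslash\Gamma)\cap B(\Gamma,R)$ tend to $x$. By Lemma~\ref{lem:control}(a) there is a trajectory steering $z$ to $x$ in a time $\tau_{z,x}\le C|z-x|$; this path leaves $\cP_i$ toward $\Gamma$ and hence incurs no entry cost, so concatenating it with an arbitrary control issued at $x$ and using $\|\ell\|_\infty\le M$ gives $\zv_i(z)=\zV(z)\le M\tau_{z,x}+e^{-\lambda\tau_{z,x}}\zV(x)$. Letting $z\to x$ and invoking the continuity of $\zv_i$ yields $\zv_i(x)\le\zV|_\Gamma(x)$. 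For the upper bound I would instead push the trajectory into $\cP_i$: by \eqref{eq:750} there is $(y_n,\alpha_n)\in\cT_x$ with $f(y_n,\alpha_n)\cdot e_i=\delta/2>0$ and an exit time $\tau_n\to0^+$ such that $x_n:=y_n(\tau_n)\in\cP_i\backslash\Gamma$ and $x_n\to x$. Entering $\cP_i$ now costs $\zc_i(x)$, whence $\zV(x)\le M\tau_n+\zc_i(x)+e^{-\lambda\tau_n}\zv_i(x_n)$; passing to the limit gives $\zV|_\Gamma(x)\le\zv_i(x)+\zc_i(x)$, and taking the minimum over $m<i\le N$ closes this half.

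It remains to prove $\zV|_\Gamma(x)=\zV_c(x)$, which I would split into two inequalities exploiting that $\zc_i\equiv0$ for $i\le m$. For $\zV_c(x)\le\zV|_\Gamma(x)$, I connect $z\in(\cP_i\backslash\Gamma)\cap B(\Gamma,R)$ with $i\le m$ to $x$ by Lemma~\ref{lem:control}(a) and argue as in the lower bound to get $\zV(z)\le M\tau_{z,x}+e^{-\lambda\tau_{z,x}}\zV(x)$; since $\zV(z)\to\zV_c(x)$ and $\tau_{z,x}\to0$, this gives $\zV_c(x)\le\zV|_\Gamma(x)$. For the reverse inequality I enter a free half-plane $\cP_i$ ($i\le m$) from $x$ via the \eqref{eq:750} construction above; the crucial difference is that the entry cost now vanishes, so $\zV(x)\le M\tau_n+e^{-\lambda\tau_n}\zV(x_n)$ with $x_n\in\cP_i\backslash\Gamma$, $x_n\to x$, whence $\zV|_\Gamma(x)\le\zV_c(x)$ after using $\zV(x_n)=\zV_c(x_n)\to\zV_c(x)$. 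The main obstacle I anticipate is precisely this equality: one must verify carefully that a round trip between $\Gamma$ and the zero-cost half-planes is genuinely free of charge, which requires simultaneously that the entry costs $\zc_i$ ($i\le m$) vanish and that the controllability times supplied by $[\tilde{A}3]$ (through Lemma~\ref{lem:control} and \eqref{eq:750}) shrink to $0$ as the auxiliary points approach $\Gamma$. Once this bookkeeping is settled, the remaining estimates are the routine dynamic-programming manipulations already carried out in Lemma~\ref{lem2.1}.
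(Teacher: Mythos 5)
Your proposal is correct and is essentially the proof the paper has in mind: the paper states this lemma without proof (Section~\ref{sec7} announces that all results there follow by combining the arguments of Section~\ref{sec3} with those of~\cite{Oudet2016_these}), and your argument is exactly that combination --- the two dynamic-programming estimates of Lemma~\ref{lem2.1} for the indices $i>m$, together with the same pair of estimates applied to the cost-free half-planes $i\le m$, where the vanishing entry cost turns the two one-sided bounds into the equality $\zv_i|_\Gamma=\zV|_\Gamma$, i.e.\ $\zV|_\Gamma=\zV_c$ on $\Gamma$. One bookkeeping point: this section operates under the strong controllability assumption $[A3]$ (which is what guarantees the continuity of the $\zv_i$ up to $\Gamma$ that your limiting steps invoke), so the connecting trajectories should be furnished by Lemma~\ref{lem:strong control}, or by Lemma~\ref{lem:control} and \eqref{eq:750} after noting that $[A3]$ implies $[\tilde{A}3]$; under $[\tilde{A}3]$ alone the $\zv_i$ are a priori discontinuous and one would have to reinstate the semicontinuous envelopes of Lemma~\ref{lem:weak-compare}, so your proof as written should be read under $[A3]$.
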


We now define a set of admissible test-function  and the Hamilton-Jacobi
system that will characterize $\zV$.
\begin{defn}
	A function $\varphi:\left(\cup_{i=1}^{m}\cP_{i}\right) \times \cP_{m+1}\times\ldots\times \cP_{N}\rightarrow\mathbb{R}^{N-m+1}$ of the form  
	\[\varphi\left(x_{c},x_{m+1},\ldots,x_{N}\right)=\left(\varphi_{c}(x_{c}),\varphi_{m+1} (x_{m+1}),\ldots,\varphi_{N} (x_{N})\right)
	\]
	is an admissible test-function
	if 
	\begin{itemize}
		\item $\varphi_{c}$ is continuous and for $i\le m$, $\varphi_{c}|_{\cP_{i}}$
		belongs to $C^{1}\left(\cP_{i}\right)$,
		
		\item the space of admissible test-function is denoted by $R\left(\cS\right)$.
	\end{itemize}

We ready to define a viscosity sub and supersolution of the Hamilton-Jacobi system related to $ \zC $.

\end{defn}
\begin{defn}
	\label{def:vis-mix} A function $\zU=\left(\zu_{c},\zu_{m+1},\ldots,\zu_{N}\right)$,
	where $\zu_{c}\in USC\left(\cup_{j=1}^{m}\cP_{j};\mathbb{R}\right)$ and $\zu_{i}\in USC\left(\cP_{i};\mathbb{R}\right)$,
	is called a \emph{viscosity subsolution} of the Hamilton-Jacobi
	system if for any test-function $\varphi \in R\left(\cS\right)$:
	\begin{enumerate}
		\item 
		if  $\zu_{c}-\varphi_{c}$ has a local maximum  at $x_{c}\in \cup_{j=1}^{m}\cP_{j}$ and if
		\begin{itemize}
			\item    $x_{c}\in \cP_{j}\backslash\Gamma $ for some $j\le m$, then
			\[
			\begin{array}{c}
			\lambda \zu_{c}\left(x_{c}\right)+H_{j}\left(x,\partial \varphi_{c}\left(x_{c}\right)\right)\le0,\end{array}
			\]
			\item  $x_{c} \in \Gamma$, then
			\[
			\ds \lambda \zu_{c}\left(x_c\right)+\max\left\{ -\lambda \zu_c(x_c) ,\max_{j\le m}\left\{ H_{j}^{+}\left(x_c, \partial \varphi_{c} (x_c)\right)\right\}, H_\Gamma \left(x_c, \dfrac{\partial \varphi_c}{\partial e_0} (x_c) \right)  \right\} \le0;
			\]
		\end{itemize}
		\item  if $\zu_{i}-\varphi_{i}$ has a local maximum point  at $x_{i}\in \cP_{i}$ for $i>m$, and if
		\begin{itemize}
			\item  $x_{i}\in \cP_{i}\backslash \Gamma$, then 
			\[
			\begin{array}{c}
			\lambda \zu_{i}\left(x_{i}\right)+H_{i}\left(x_i,\partial\varphi_{i} (x_{i})\right)\le0,\end{array}
			\]
			\item  $x_{i} \in \Gamma$, then
			\[
			\ds\lambda \zu_i (x_i)+\max\left\{ H_{i}^{+}\left(x_i,\partial \varphi_i (x_i)\right), -\lambda \zu_c (x_i) \right\} \le0.
			\]
		\end{itemize}
	\end{enumerate}
	A function $\zU=\left(\zu_{c},\zu_{m+1},\ldots,\zu_{N}\right)$
	where $\zu_{c}\in LSC\left(\cup_{j=1}^{m}\cP_{j};\mathbb{R}\right),\zu_{i}\in LSC\left(\cP_{i};\mathbb{R}\right)$
	is called a \emph{viscosity supersolution} of the Hamilton-Jacobi
	system if for any test-function $\varphi\in R\left(\cS\right)$:
	\begin{enumerate}
		\item 
		if  $\zu_{c}-\varphi_{c}$ has a local minimum  at $x_{c}\in \cup_{j=1}^{m}\cP_{j}$ and if
		\begin{itemize}
			\item    $x_{c}\in \cP_{j}\backslash\Gamma $ for some $j\le m$, then
			\[
			\begin{array}{c}
			\lambda \zu_{c}\left(x_{c}\right)+H_{j}\left(x_c,\partial \varphi_{c}\left(x_{c}\right)\right)\ge0,\end{array}
			\]
			\item  $x_{c} \in \Gamma$, then
			\[
			\ds \lambda \zu_{c}\left(x_c\right)+\max\left\{ -\lambda \zu_c(x_c) ,\max_{j\le m}\left\{ H_{j}^{+}\left(x_c, \partial \varphi_{c} (x_c)\right)\right\}, H_\Gamma \left(x_c, \dfrac{\partial \varphi_c}{\partial e_0} (x_c) \right)  \right\} \ge 0;
			\]
		\end{itemize}
		\item  if $\zu_{i}-\varphi_{i}$ has a local minimum point  at $x_{i}\in \cP_{i}$ for $i>m$, and if
		\begin{itemize}
			\item  $x_{i}\in \cP_{i}\backslash \Gamma$, then 
			\[
			\begin{array}{c}
			\lambda \zu_{i}\left(x_{i}\right)+H_{i}\left(x_i,\partial\varphi_{i} (x_{i})\right)\ge0,\end{array}
			\]
			\item  $x_{i} \in \Gamma$, then
			\[
			\ds\lambda \zu_i (x_i)+\max\left\{ H_{i}^{+}\left(x_i,\partial \varphi_i (x_i)\right), -\lambda \zu_c (x_i) \right\} \ge0.
			\]
		\end{itemize}
	\end{enumerate}
	A function $\zU=\left(\zu_{c},\zu_{m+1},\ldots,\zu_{N}\right)$
	where $\zu_{c}\in C\left(\cup_{j\le m}\cP_{j};\mathbb{R}\right)$ and
	$\zu_{i}\in C\left(\cP_{i};\mathbb{R}\right)$ for all $i>m$ is called
	a \emph{viscosity solution} of the Hamilton-Jacobi system if
	it is both a viscosity subsolution and a viscosity supersolution
	of the Hamilton-Jacobi system.
\end{defn}

Similarly to the previous sections, we have the relation between the value function $ \zV $ and the Hamilton-Jacobi system.

\begin{thm}
	Let $ \zV $ be the value function corresponding to $ \zC $, then $  (\zv_c, \zv_{m+1}, \ldots \zv_N) $ is a viscosity solution of the Hamilton-Jacobi system.
\end{thm}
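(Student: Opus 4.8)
The plan is to verify, regime by regime, the sub- and supersolution inequalities of Definition~\ref{def:vis-mix}, reusing the machinery of Section~\ref{A3-cond} for the positive-cost faces $i>m$ and importing the continuous-interface junction arguments of~\cite{Oudet2016_these} for the merged branch $\zu_c=\zV_c$. Throughout I would exploit that each $\zv_i$ is continuous, that $\zu_c=\zV_c=\zv_j$ on $\Gamma$ for every $j\le m$, and the ordering $\max_{m<i\le N}\zv_i\le \zV_c\le \min_{m<i\le N}\{\zv_i+\zc_i\}$ established just above. For a point in an open face $\cP_i\backslash\Gamma$ the equation is the classical one, so the sub/supersolution properties follow from the dynamic programming principle exactly as in~\cite{BD1997,Barles1994}; the work is concentrated at the points of $\Gamma$.

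For the \emph{subsolution} property I would first treat a face $i>m$: at $x_i\in\Gamma$ where $\zv_i-\varphi_i$ has a local maximum, the argument of Lemma~\ref{lem:value-sub-sol} applies verbatim (it never used positivity of the entry cost) and gives $\lambda\zv_i(x_i)+H_i^+(x_i,\partial\varphi_i(x_i))\le0$; combining it with $\lambda\zv_i(x_i)-\lambda\zu_c(x_i)\le0$, which holds because $\zv_i\le\zV_c=\zu_c$, yields the required $\max\{H_i^+,-\lambda\zu_c\}$ inequality. For the merged branch, at a local maximum $x_c\in\Gamma$ of $\zu_c-\varphi_c$, the term $-\lambda\zu_c(x_c)$ is handled trivially since $\lambda\zu_c(x_c)-\lambda\zu_c(x_c)=0$; for each $j\le m$ the restriction $\varphi_c|_{\cP_j}\in C^1(\cP_j)$ touches $\zv_j=\zu_c$ from above at $x_c$, so Lemma~\ref{lem:value-sub-sol} gives $\lambda\zu_c(x_c)+H_j^+(x_c,\partial\varphi_c|_{\cP_j}(x_c))\le0$; and since $\varphi_c|_\Gamma\in C^1(\Gamma)$, the tangential estimate of Lemma~\ref{lem2.2}, via the relaxed-field identity of Lemma~\ref{lem:fl=FL}, gives $\lambda\zu_c(x_c)+H_\Gamma(x_c,\tfrac{\partial\varphi_c}{\partial e_0}(x_c))\le0$. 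Taking the maximum of these three bounds closes the subsolution case.

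For the \emph{supersolution} property the merged branch requires essentially no work: the admissible maximand contains $-\lambda\zu_c(x_c)$, so $\lambda\zu_c(x_c)+\max\{\,\cdots\,\}\ge\lambda\zu_c(x_c)-\lambda\zu_c(x_c)=0$ automatically. On a face $i>m$, at a local minimum $x_i\in\Gamma$ of $\zv_i-\varphi_i$, I would split as in Lemma~\ref{lem:value-super-sol}: if $\zv_i(x_i)=\zV_c(x_i)=\zu_c(x_i)$ the term $-\lambda\zu_c(x_i)=-\lambda\zv_i(x_i)$ already makes the inequality hold, while if $\zv_i(x_i)<\zu_c(x_i)$ then Lemma~\ref{lem:remain} ensures that $\varepsilon$-optimal trajectories issued near $x_i$ remain in $\cP_i\backslash\Gamma$ for a fixed time, so the argument of Lemma~\ref{lem:value-super-sol} runs unchanged and produces $\lambda\zv_i(x_i)+H_i^+(x_i,\partial\varphi_i(x_i))\ge0$.

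The hard part, I expect, will be the subsolution check for the merged branch $\zu_c$ on $\Gamma$: one must simultaneously dominate the three competing modes --- free re-entry $(-\lambda\zu_c)$, the inward fluxes $H_j^+$ of all the zero-cost faces $j\le m$, and the tangential Hamiltonian $H_\Gamma$ --- which hinges on the continuity of $\zu_c$ across the merged faces so that a single admissible test-function $\varphi_c$ restricts to a legitimate $C^1$ test-function on every $\cP_j$ and on $\Gamma$ at once. Once this bookkeeping is set up, each individual inequality reduces either to a lemma already proved in Section~\ref{A3-cond} or to the continuous-interface arguments of~\cite{Oudet2016_these}, and assembling them gives the theorem; uniqueness of $V$ would then follow from the comparison principle argued as in Theorem~\ref{thm3}.
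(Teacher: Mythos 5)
First, a point of comparison: the paper gives no proof of this theorem at all --- Section~\ref{sec7} states explicitly that its results are ``presented without the proofs'' and are obtained by combining the arguments of Section~\ref{A3-cond} with those of \cite{Oudet2016_these}. Your proposal implements exactly that declared strategy, and most of it is sound: the open-face case, the subsolution assembly for both the faces $i>m$ (Lemma~\ref{lem:value-sub-sol} plus $\zv_i\le\zV_c$) and the merged branch (restricting $\varphi_c$ to each $\cP_j$, $j\le m$, and to $\Gamma$, then invoking Lemma~\ref{lem:value-sub-sol} and Lemma~\ref{lem2.2}), and the supersolution argument on the faces $i>m$ via the adapted Lemma~\ref{lem:remain} and Lemma~\ref{lem:value-super-sol} are all correct.

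The gap is precisely in the step you dismiss as requiring ``essentially no work'': the supersolution property of $\zu_c=\zV_c$ at $\Gamma$. You rely on the term $-\lambda\zu_c(x_c)$ inside the max of Definition~\ref{def:vis-mix}, which indeed renders the inequality $\ge 0$ vacuous for \emph{every} function. But a vacuous supersolution condition cannot support uniqueness: in Case B of the proof of Theorem~\ref{thm4}, the paper dichotomizes on whether $\zw_c(\bar{x}_c)<\min_{j>m}\{\zw_j(\bar{x}_c)+\zc_j(\bar{x}_c)\}$ and, in that case, invokes super-optimality of $\zw_c$ --- information that can only come from a supersolution inequality featuring the term $-\lambda\min_{j>m}\{\zu_j+\zc_j\}$, the exact analogue of \eqref{014}. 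So the $-\lambda\zu_c(x_c)$ in Definition~\ref{def:vis-mix} is evidently a typo for $-\lambda\min_{j>m}\{\zu_j(x_c)+\zc_j(x_c)\}$, and under the intended definition the merged-branch supersolution at $\Gamma$ is the genuinely hard step (you predict the hard part is the subsolution check; it is the other way around). What is needed is the analogue of Lemma~\ref{lem2.3}: assuming $\zV_c(x_c)<\min_{j>m}\{\zv_j(x_c)+\zc_j(x_c)\}$, take $\varepsilon_n$-optimal controls for $x_c$; either (i) the trajectories enter some positive-cost face $\cP_{i_0}$, $i_0>m$, at times $\tau_n\to 0$, which forces $\zV_c(x_c)\ge\zv_{i_0}(x_c)+\zc_{i_0}(x_c)$ and contradicts the assumption; or (ii) they remain in the merged region $\Gamma\cup\bigcup_{j\le m}\cP_j$ for a uniform time --- and here, unlike in Lemma~\ref{lem2.3} where the trajectory stays on $\Gamma$ and only $H_\Gamma$ appears, the trajectory may wander into the zero-cost faces, so one must invoke the super-optimality/supersolution machinery of \cite{Oudet2016_these} for the zero-cost junction to conclude $\lambda\zV_c(x_c)+\max\{\max_{j\le m}H_j^+,H_\Gamma\}\ge 0$ in the viscosity sense. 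This is the one piece of genuinely new work in the theorem, and your proposal omits it.
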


We now state and prove the comparison principle for the Hamilton-Jacobi system which is the main result of this section

\begin{thm}\label{thm4}
	Let $ \zU = (\zu_c, \zu_{m+1}, \ldots, \zu_N) $ and $ \zW = (\zw_c, \zw_{m+1}, \ldots, \zw_N)  $ be respectively a bounded viscosity sub and supersolution of the Hamilton-Jacobi system. Then $ \zU \le \zW $, i.e. $ \zu_c \le \zw_c $ on $ \cup_{j\le m} \cP_j $ and $ \zu_i \le \zw_i $ on $ \cP_i $ for all $ i>m $.
\end{thm}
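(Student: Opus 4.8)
The plan is to run the first (PDE-type) proof of Theorem~\ref{thm3} on the positive-cost half-planes $\cP_i$ with $i>m$, and to treat the glued continuous component $\zu_c$ on $\cup_{j\le m}\cP_j$ with the vanishing-entry-cost junction arguments of~\cite{Oudet2016_these}. First I reduce to showing that $\zu_c-\zw_c$ has nonpositive supremum on $\cup_{j\le m}\cP_j$ and each $\zu_i-\zw_i$ has nonpositive supremum on $\cP_i$. As in Theorem~\ref{thm3} I fix $\mu\in(0,1)$ close to $1$ and perturb the subsolution: using $H_j(x,p)\le M(|p|+1)$ I pick strictly decaying quadratics $\phi_c(x)=\phi_i(x)=-|x|^2-\bar M$ that are subsolutions, and set $\zu_c^\mu=\mu\zu_c+(1-\mu)\phi_c$ and $\zu_i^\mu=\mu\zu_i+(1-\mu)\phi_i$. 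By convexity of the Hamiltonians in $p$ the perturbed family is again a subsolution, and since each $\zu^\mu$ tends to $-\infty$ at infinity the differences $\zu_c^\mu-\zw_c$ and $\zu_i^\mu-\zw_i$ attain their maxima $M_c^\mu$ and $M_i^\mu$ at finite points $\bar x_c$ and $\bar x_i$. It then suffices to contradict $\max\{M_c^\mu,\max_{i>m}M_i^\mu\}>0$ and let $\mu\to 1$.

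Suppose first the maximum is attained on a positive-cost half-plane, i.e. $M_i^\mu\ge\max\{M_c^\mu,\max_jM_j^\mu\}$ and $M_i^\mu>0$ for some $i>m$. If $\bar x_i\in\cP_i\backslash\Gamma$ the conclusion follows from classical interior comparison. If $\bar x_i\in\Gamma$ I would invoke the Lipschitz continuity of $\zu_i^\mu$ near $\Gamma$ (the analogue of Lemma~\ref{lem:continuity-sub}; under $[\tilde{A}3]$ one first gains it by the sup-convolution of Section~\ref{sec10}) and double variables with the shift $\delta(\varepsilon)=(L+1)\varepsilon$ exactly as in~\eqref{Psi}. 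The shift forces the maximizer $x_\varepsilon$ off $\Gamma$, and the positive-branch supersolution condition $\max\{H_i^+(y_\varepsilon,\cdot),-\lambda\zw_c(y_\varepsilon)\}\ge 0$ splits into the same subcases A.1--A.3 as in Theorem~\ref{thm3}: if $\zw_i(y_\varepsilon)\ge\zw_c(y_\varepsilon)$, then together with the subsolution inequality $\zu_i^\mu\le\zu_c^\mu$ on $\Gamma$ (which is the role played here by Lemma~\ref{lem2.1}) this forces $M_i^\mu\le M_c^\mu$, a contradiction; otherwise an $H_i$-viscosity inequality holds for $\zw_i$, and subtracting it from the one for $\zu_i^\mu$ and using the local Lipschitz modulus of $H_i$ gives $M_i^\mu\le 0$, again a contradiction.

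Suppose instead the maximum is attained on the continuous component, $M_c^\mu\ge\max_{i>m}M_i^\mu$ and $M_c^\mu>0$. If $\bar x_c\in\cP_j\backslash\Gamma$ for some $j\le m$ the interior comparison is classical. The delicate point is $\bar x_c\in\Gamma$: here the continuous-part junction condition only sees the zero-cost branches and $\Gamma$ itself, namely the subsolution satisfies $\lambda\zu_c+\max_{j\le m}H_j^+\le 0$ and $\lambda\zu_c+H_\Gamma\le 0$, so no positive-cost branch enters and the configuration is exactly that of the vanishing-entry-cost junction in~\cite{Oudet2016_these}. I would therefore reproduce Oudet's junction comparison: double variables on $\cup_{j\le m}\cP_j$ (tangentially on $\Gamma$ as in the function $\Phi_\varepsilon$ of Case~B.1 when the relevant flux is the $H_\Gamma$ one, and with the normal shift into a branch $\cP_j$ when the relevant flux is an $H_j^+$ one), using the Lipschitz continuity of $\zu_c$ near $\Gamma$ and the interior supersolution inequalities $\lambda\zw_c+H_j\ge 0$ on $\cP_j\backslash\Gamma$, to obtain $M_c^\mu\le 0$, a contradiction.

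The step I expect to be the main obstacle is precisely this interface bookkeeping: one must verify that the two coupling mechanisms are kept separate and exhaustive — the positive branches $\cP_i$ ($i>m$) couple to $\zu_c$ only one-directionally through their own junction condition (handled in the first case via $\zu_i\le\zu_c$), while $\zu_c$ at $\Gamma$ couples only to the zero-cost branches and to $\Gamma$ (handled by the Oudet argument) — and that the perturbed functions $\zu^\mu$ simultaneously retain the strict interface inequalities making $M_c^\mu>0$ and $M_i^\mu>0$ usable and the Lipschitz regularity needed to push maximizers off $\Gamma$. Once every subcase yields $M_c^\mu\le 0$ and $M_i^\mu\le 0$, letting $\mu\to 1$ gives $\zu_c\le\zw_c$ on $\cup_{j\le m}\cP_j$ and $\zu_i\le\zw_i$ on each $\cP_i$.
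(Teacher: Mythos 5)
Your Case A (maximum attained on a positive-cost half-plane $\cP_i$, $i>m$) is sound and is exactly what the paper does: it reduces to Case A of the proof of Theorem~\ref{thm3}, with the subsolution coupling $\zu_i\le\zu_c$ on $\Gamma$ playing the role of Lemma~\ref{lem2.1}. The gap is in your Case B. You claim that at $\bar x_c\in\Gamma$ the continuous component \emph{only sees the zero-cost branches and $\Gamma$ itself}, so that the pair $(\zu_c^\mu,\zw_c)$ sits in the configuration of the vanishing-entry-cost junction of~\cite{Oudet2016_these}. That is true for the subsolution $\zu_c^\mu$ (its interface inequality does imply $\lambda\zu_c^\mu+\max_{j\le m}H_j^+\le 0$ and $\lambda\zu_c^\mu+H_\Gamma\le 0$), but it is false for the supersolution $\zw_c$: the equation for the continuous component at $\Gamma$ also carries the switching term generated by entering a positive-cost branch, namely $-\lambda\min_{j>m}\{\zw_j+\zc_j\}$ inside the max (this is the term the paper's proof of Theorem~\ref{thm4} actually manipulates). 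The supersolution inequality at $\bar x_c$ can therefore be realized by that switching term alone: it may happen that $\zw_c(\bar x_c)\ge \zw_k(\bar x_c)+\zc_k(\bar x_c)$ for some $k>m$ while every Hamiltonian term is strictly negative there. In that situation $\zw_c$ is \emph{not} a supersolution of the zero-cost junction system near $\bar x_c$, the super-optimality principle of \cite[Theorem 3.2.4]{Oudet2016_these} is unavailable, and Oudet's comparison theorem cannot be invoked for $(\zu_c^\mu,\zw_c)$; your argument breaks down precisely at this point.

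The paper closes Case B by the dichotomy you are missing. If $\zw_c(\bar x_c)<\min_{j>m}\{\zw_j(\bar x_c)+\zc_j(\bar x_c)\}$, then (and only then) $\zw_c$ satisfies the super-optimality at $\bar x_c$, and the zero-entry-cost comparison \cite[Theorem 3.2.5]{Oudet2016_these} applied against the subsolution $\zu_c^\mu$ yields $M_c^\mu\le 0$; this is the subcase your proposal covers. Otherwise there is $k>m$ with $\zw_c(\bar x_c)\ge\zw_k(\bar x_c)+\zc_k(\bar x_c)$, and then, exactly as in Case B.2 of the proof of Theorem~\ref{thm3}, the subsolution coupling $\zu_c^\mu\le\zu_k^\mu+\zc_k$ on $\Gamma$ transfers the maximum to the positive-cost branch: $M_k^\mu=M_c^\mu>0$ with $\bar x_c$ a maximum point of $\zu_k^\mu-\zw_k$. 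One then reruns Case A on $\cP_k$, noting that its subcase A.1 (the alternative $\zw_k\ge\zw_c$ along the doubled maximizers) is excluded because $\zw_c(\bar x_c)\ge\zw_k(\bar x_c)+\zc_k(\bar x_c)>\zw_k(\bar x_c)$, so only the Hamiltonian subcases survive and give $M_k^\mu\le 0$, the desired contradiction. Without this transfer mechanism from the continuous component back to the positive-cost branches, your Case B does not establish the comparison.
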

\begin{proof}
First of all, similarly to the first step of the proof of Theorem~\ref{thm3} (Section~\ref{sec6}) we can check that  there exists $ (\phi_c, \phi_{m+1}, \ldots , \phi_N) $ such that for all $ 0<\mu<1 $ and $ \mu $ close to $ 1 $, $ \zu^\mu_c := \mu \zu +  (1-\mu) \phi_c$  and 	$ \zu^\mu_j := \mu \zu +  (1-\mu) \phi_j $ for all $ i\in \{m+1, \ldots,N\} $ then $ (\zu_c,\zu_{m+1},\ldots,\zu_N) $ is a viscosity subsolution of HJ system related to $ \zC $. We also have the function $ \zu^\mu_c  -\zw_c$ and $ \zu_i^\mu - \zw_i$ have maximum value $ M_c^\mu $ and $ M^\mu_j $ which are reached at some points $ \bar{x}_c $ and $ \bar{x}_j $. We argue by contradiction, through considering the two following cases:

\begin{enumerate}
	\item [Case A] Assume that $ M_i^\mu > M_c^\mu $ and $ M_i^\mu >0 $.  Repeat Case A as in the proof of Theorem~\ref{thm3}, implies that either $ \zu^\mu_i(\bar{x}_i) -\zw_i(\bar{x}_i) = M_i^\mu  \le 0 $ or $ M_i^\mu \le  M_c^\mu $, the desired contradiction.
	
	\item [Case B] Assume that $ M_c^\mu \ge \max_{j>m} \{M_j^\mu\} $ and $ M_c^\mu>0 $. If $ \zw_c (\bar{x}_c) < \min_{j>m} \{ \zw_j (\bar{x}_c) + \zc_j(\bar{x}_c) \}$, we can check that $ \zw_c $ satisfies the super-optimality  \cite[Theorem 3.2.4]{Oudet2016_these} at $ \bar{x}_c $. Furthermore, since $ \zu_c^\mu $ is a viscosity subsolution of
	\[ 
	\begin{cases}
	\lambda \zu_c^\mu (x) + H_i (x,\partial \zu_c^\mu (x)) \le 0, & \ds \text{if } x\in (\bigcup_{j\le m} \cP_j )\backslash \Gamma, \\
	\lambda \zu_c^\mu (x) + \max_{j\le m} H^+_j (x, \partial \zu_c^\mu (x)) \le 0, & \text{if } x\in  \Gamma,
	\end{cases}
	 \]
	 Thanks to the comparison principle for HJ when all of entry costs are zero,  see \cite[Theorem 3.2.5]{Oudet2016_these}, we obtain that $ \zu_c^\mu (\bar{x}_c)  - \zw_c (\bar{x}_c) = M_c^\mu \le 0$ which leads to a contradiction. Otherwise, if there exists $ k>m $ such that  
	 \begin{equation}\label{eq:gen}
	 \zw_c (\bar{x}_c) \ge \min_{j>m} \{ \zw_j (\bar{x}_c) + \zc_j(\bar{x}_c) \}  =   \zw_k (\bar{x}_c) + \zc_k(\bar{x}_c), 
	 \end{equation}
	 repeat Case B.3 as in in the proof of Theorem~\ref{thm3}, implies that there exists $ k> m  $ such that
	 \[ 
	  M^\mu_k = \zu^\mu_k (\bar{x}_c) -\zw_k (\bar{x}_c) = \zu^\mu_c (\bar{x}_c) -\zw_c (\bar{x}_c) = M^\mu_c.
	  \]
	  Repeat Case A and note that thanks to~\eqref{eq:gen}, the case $ w_k (\bar{x}_c) \ge \zw_c (\bar{x}_c)$ does not occur, we finally obtain that $ M_k^\mu \le 0 $, the desired contradiction. 
 \end{enumerate}
	Finally, we get $ M_c^\mu \le 0 $ and $ M_i^\mu \le 0 $ for all $ i\in \{1,\ldots,N\} $ and $ 0< \mu <1 $, $ \mu $ close to $ 1 $. We conclude by letting $ \mu  $ to $ 1 $.
\end{proof}


\section{Appendix}

\subsection{Proof of Lemma~\ref{lem:fl=FL}}
By the piecewise linearity of the function $(\zeta,\xi) \mapsto -(\zeta \cdot e_0) \frac{\partial \varphi}{\partial e_0} (x) - \xi $, we can obviously see that Lemma~\ref{lem:fl=FL} is a directly consequence of the following lemma. 
\begin{lem}
Under Assumption $[A]$, for $x\in\Gamma$, then 
\[
\textsc{FL}_\Gamma(x) \subset f\ell_\Gamma (x) \subset \overline{\text{co}} \left( \textsc{FL}_\Gamma (x) \right),
\]
where $\textsc{FL}_\Gamma(x)$ is defined in \eqref{eq:FL}.
\end{lem}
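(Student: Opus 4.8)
The statement is a two‑sided inclusion, and I would prove the two inclusions by genuinely different mechanisms. The right‑hand inclusion $f\ell_\Gamma(x)\subset\overline{\text{co}}(\textsc{FL}_\Gamma(x))$ is at heart a convex‑duality statement, so rather than approximating individual relaxed pairs by genuine ones I would show that every closed half‑space containing $\textsc{FL}_\Gamma(x)$ also contains $f\ell_\Gamma(x)$. Concretely, fix an arbitrary linear functional $\Psi(\zeta,\xi)=\beta\,(\zeta\cdot e_0)-\gamma\,\xi$ on $\R e_0\times\R$ and set $g(y):=\sup_{(\zeta,\xi)\in\textsc{FL}_\Gamma(y)}\Psi(\zeta,\xi)$ for $y\in\Gamma$. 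Since the $A_j$ are compact and $f_j,\ell_j$ are bounded by $[A1]$, the set $\textsc{FL}_\Gamma(y)$ is bounded, so $g(y)$ is finite and $\overline{\text{co}}(\textsc{FL}_\Gamma(x))$ is exactly the intersection over all $\Psi$ of the half‑spaces $\{\Psi\le g(x)\}$. Thus it suffices to prove $\Psi(\eta,\mu)\le g(x)$ for every $(\eta,\mu)\in f\ell_\Gamma(x)$ and every $\Psi$.

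To carry this out I would take witnesses $(y_{x,n},\alpha_n)\in\cT_x$ and $t_n\to0^+$ for $(\eta,\mu)$. Because $y_{x,n}(t)\in\Gamma$ for $t\in[0,t_n]$, the velocity $f(y_{x,n}(t),\alpha_n(t))=\dot y_{x,n}(t)$ is tangent to $\Gamma$ for a.e. $t$, so the integrand $(f,\ell)(y_{x,n}(t),\alpha_n(t))$ lies in $\textsc{FL}_\Gamma(y_{x,n}(t))$ and therefore $\Psi$ of it is $\le g(y_{x,n}(t))$. The analytic core is then the upper semicontinuity of $g$ at $x$: given $y_k\to x$ and near‑maximizers $a_k\in A_{j_k}^{\Gamma}(y_k)$, I would pass to a subsequence with $j_k\equiv j$ and $a_k\to\bar a$ by compactness of $A_j$, use continuity of $f_j$ to get $f_j(x,\bar a)\cdot e_j=\lim_k f_j(y_k,a_k)\cdot e_j=0$, i.e. $\bar a\in A_j^{\Gamma}(x)$, and conclude $\limsup_k g(y_k)\le\Psi(f_j(x,\bar a),\ell_j(x,\bar a))\le g(x)$. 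Since $|y_{x,n}(t)-x|\le Mt_n\to0$ uniformly on $[0,t_n]$, upper semicontinuity gives $g(y_{x,n}(t))\le g(x)+o(1)$ uniformly in $t$; averaging over $[0,t_n]$ and letting $n\to\infty$ yields $\Psi(\eta,\mu)\le g(x)$, which is what the duality argument needs.

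For the left‑hand inclusion $\textsc{FL}_\Gamma(x)\subset f\ell_\Gamma(x)$ I would argue by direct construction of admissible trajectories that remain on $\Gamma$. Fix $(\zeta,\xi)=(f_i(x,a),\ell_i(x,a))$ with $a\in A_i^{\Gamma}(x)$, so $\zeta=(f_i(x,a)\cdot e_0)e_0\in\R e_0$. When $\zeta=0$ the constant trajectory $y_{x,n}\equiv x$ with control $\alpha_n\equiv a$ is admissible, stays on $\Gamma$, and has averages exactly $(\zeta,\xi)$, settling that case. For $\zeta\neq0$ the constant‑control trajectory only remains on $\Gamma$ to first order (its $e_i$‑component is $O(t^2)$), so instead I would produce, on each short interval $[0,t_n]$, a measurable control whose trajectory stays \emph{exactly} on $\Gamma$ with time‑averaged velocity and running cost tending to $(\zeta,\xi)$; such a control I would extract from the Filippov implicit function lemma (see \cite{MW1967}) applied to the differential inclusion of tangential velocity–cost pairs, using the convexity of $\textsc{FL}_i$ from $[A2]$.

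I expect this last construction to be the main obstacle. The difficulty is not matching the average but keeping the trajectory exactly on $\Gamma$ as it moves, which forces tangential velocities near $\zeta$ to remain achievable at all points of $\Gamma$ close to $x$ so that the $e_i$‑drift can be continually corrected and the trajectory neither enters $\cP_i\setminus\Gamma$ nor leaves $\cS$. This is precisely the richness of the admissible speeds exploited in the connecting construction of Lemma~\ref{lem:strong control}, and I would model the selection of the correcting controls on that argument, combining it with the convexity in $[A2]$ to guarantee solvability of the tangential inclusion on $[0,t_n]$ and hence that $(\zeta,\xi)\in f\ell_\Gamma(x)$.
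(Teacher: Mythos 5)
Your proof is correct, and its treatment of the second inclusion is genuinely different from the paper's. For $f\ell_\Gamma(x)\subset\overline{\text{co}}\left(\textsc{FL}_\Gamma(x)\right)$ the paper argues directly on the averages: using the Lipschitz bounds in $[A1]$ it replaces $(f,\ell)(y_{x,n}(t),\alpha_n(t))$ by $(f,\ell)(x,\alpha_n(t))$ at an $o(1)$ cost, and then asserts that the resulting averaged pairs are within $o(1)$ of $\overline{\text{co}}\left(\textsc{FL}_\Gamma(x)\right)$, which is closed. You instead dualize: by Hahn--Banach it suffices to bound every linear functional $\Psi$ on $f\ell_\Gamma(x)$ by $g(x)=\sup_{\textsc{FL}_\Gamma(x)}\Psi$, and the whole inclusion then reduces to upper semicontinuity of $y\mapsto g(y)$ at $x$, which you prove by compactness of the $A_j$ and finiteness of the index set. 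The ingredients (a.e.\ tangency of the velocities, Lipschitz continuity, compactness) are the same, but your version isolates in the u.s.c.\ lemma exactly the compactness extraction that the paper leaves implicit in its ``distance tends to $0$'' claim -- namely that admissible pairs whose velocity has small $e_j$-component are close to $\textsc{FL}_\Gamma(x)$ -- so your argument is, if anything, tighter. For the first inclusion your route coincides with the paper's: both construct trajectories that remain on $\Gamma$, and both need controllability to do it. The paper disposes of this in one sentence by invoking $[A3]$ (or $[\tilde{A}3]$) together with Filippov's lemma, while you spell out the mechanism (exact constant trajectory when $\zeta=0$; for $\zeta\neq 0$, continual correction of the $e_i$-drift using the speeds supplied by controllability, convexity $[A2]$ to stay inside $\textsc{FL}_i$, and Filippov's lemma for a measurable selection), which is a legitimate filling-in rather than a gap. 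Note also that the dependence on $[A3]$/$[\tilde{A}3]$ you flag is unavoidable: the inclusion $\textsc{FL}_\Gamma(x)\subset f\ell_\Gamma(x)$ can fail under $[A]$ alone (if the controls that are tangential at $x$ cease to be tangential at every nearby point of $\Gamma$, no nonconstant trajectory can remain on $\Gamma$ and $f\ell_\Gamma(x)$ is empty while $\textsc{FL}_\Gamma(x)$ is not), so the ``Under Assumption $[A]$'' in the statement is a slip of the paper that both its proof and yours silently correct.
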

\begin{proof}
In this case the proof is standard (see \cite{BD1997}), but we prove it for completeness.

The inclusion $\textsc{FL}_\Gamma(x) \subset f\ell_\Gamma (x) $ is obtained by explicitly constructing trajectories which still remain on the interface $\Gamma$ for a short time. More specifically, let $i\in\{1,\ldots,N\}$ and $a\in A_i$ such that $(f_i(x,a),\ell_i(x,a)) \in \textsc{FL}_i \cap (\R e_0 \times \R)$. Then from the strong controllability condition $[A3]$ or the moderate controllability $ [\tilde{A3}] $, there exists $ (y_\alpha, \alpha)\in \cT_x $ such that $y_{x}$ remains on $\Gamma$ at least for a while and satisfies
\[
\dot{y}_{x}(0) = f(y_{x,\alpha} (0), \alpha (0)) = f_i (x,a).
\]
Thus, there exists a sequence of times $t_n\rightarrow 0^+$ such that $y_{x} (t) \in \Gamma $ for all  $t\in [0,t_n]$  and
\[
f_i(x,a) = \lim_{n\rightarrow0}\dfrac{1}{t_n} \int_0^{t_n} f(y_{x}(t),\alpha(t)) dt. 
\]
Similarly, we get
\[
\ell_i(x,a) = \lim_{n\rightarrow0}\dfrac{1}{t_n} \int_0^{t_n} \ell(y_{x}(t),\alpha(t)) dt. 
\]
Hence $(f_i(x,a), \ell_i(x,a)) \in  f\ell_\Gamma (x)$ and the first inclusion is proved.

We now consider $(\zeta,\xi)\in  f\ell_\Gamma (x)$, there exists a sequence of admissible trajectories $(y_n,\alpha_n)\in \cT_x$ which remain on $\Gamma$ at least for a while and a sequence of times $t_n\rightarrow 0^+$ such that
\[
 \lim_{n\rightarrow0}\dfrac{1}{t_n} \int_0^{t_n} f(y_{n}(t),\alpha_n(t)) dt = \zeta,\quad \lim_{n\rightarrow0}\dfrac{1}{t_n} \int_0^{t_n} \ell(y_{n}(t),\alpha_n(t)) dt=\xi.
\]
On the other hand

\begin{align*}
 &(\zeta_n, \xi_n) :=\dfrac{1}{t_n} \left( \int_0^{t_n}f(y_{n}(t),\alpha_n(t)) dt , \int_0^{t_n} \ell(y_{n}(t),\alpha_n(t)) dt\right) \\
 & =   \dfrac{1}{\left| \{ t\le t_n : y_n(t) \in \Gamma \}\right| } \left( \int_0^{t_n}f(x,\alpha_n(t)) \mathds{1}_{\{y_n(t)\in \Gamma\}} dt , \int_0^{t_n} \ell(x,\alpha_n(t))\mathds{1}_{\{y_n(t)\in \Gamma\}} dt\right) + o(1),
\end{align*}
where $o(1)$ is a vector tending to $0$ as $n\rightarrow \infty$.
Therefore, the distance of 
\[
 \frac{1}{\left| \{ t\le t_n : y_n(t) \in \Gamma \}\right| } \left( \int_0^{t_n}f(x,\alpha_n(t)) \mathds{1}_{\{y_n(t)\in \Gamma\}} dt , \int_0^{t_n} \ell(x,\alpha_n(t))\mathds{1}_{\{y_n(t)\in \Gamma\}} dt\right)
 \]
to the set $\overline{co} \left( \cup_{i=1}^N \textsc{FL}_i (x) \right)$ tends to $0$ as $n\rightarrow \infty$.
Additionally, since $f(y_n(t),\alpha_n (t))\in \Gamma$ for a.e. $t\in [0,t_n]$, we get the distance of $(\zeta_n,\xi_n)$ to the set $\overline{co}\left( \textsc{FL}_\Gamma (x) \right)$ tends to $0$ as $n\rightarrow \infty$.
Hence, we obtain the second inclusion $ f\ell_\Gamma (x) \subset \overline{\text{co}} \left( \textsc{FL}_\Gamma (x) \right)$.
\end{proof}

\subsection{A second proof of Theorem~\ref{thm3}}
First of all, from Definition~\ref{def:vis-sol}, we see that if $U = \{u_1 , \ldots, u_N , u_\Gamma\}$ is a viscosity subsolution of~\eqref{eq:HJmain}, then for any $i\in\{1,\ldots,N\}$, $u_i$ is a viscosity subsolution of~\eqref{001}
and $u_i(x) \le u_\Gamma (x)$  for any $x\in \Gamma$.
Therefore, we have the following properties for $U$ (see \cite{Oudet2016_these} for detailed proofs)

\begin{lem}\label{lem:sub-optimal}
Under Assumptions $[A]$ and $[A3]$, let $i\in \{1,\ldots,N\}$ and $x\in \left( B(\Gamma,r) \cap \cP_i \right) \backslash \Gamma$ and $\alpha_i \in L^{\infty} (0,\infty; A_i)$.
Let $\eta>0$ be such that $y_x(t)= x + \int_0^t f_i (y_x(s),\alpha_i(s) ds $ belongs to $ B(\Gamma,r) \cap \cP_i$ for any $t\in [0,\eta]$.
Assume that $u_i$ is a $C^1$ function on $\Gamma$. Then we have
\begin{equation}
u_i(x) \le \int_0^\eta \ell	_i (y_x(s),\alpha_i(s) e^{-\lambda s} ds + u_i (y_x(\eta)) e^{-\lambda \eta}.
\end{equation}
\end{lem}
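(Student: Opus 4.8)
The plan is to reduce the inequality to the monotonicity of a single scalar function along the trajectory. Define
\[
g(t) := u_i\big(y_x(t)\big)\, e^{-\lambda t} + \int_0^t \ell_i\big(y_x(s),\alpha_i(s)\big)\, e^{-\lambda s}\, ds, \qquad t\in[0,\eta].
\]
Since $y_x(0)=x$, we have $g(0)=u_i(x)$ and $g(\eta)=u_i(y_x(\eta))e^{-\lambda\eta}+\int_0^\eta \ell_i\,e^{-\lambda s}\,ds$, so the asserted estimate is exactly $g(0)\le g(\eta)$. Because $y_x$ is Lipschitz and $u_i$ is $C^1$, the map $g$ is absolutely continuous on $[0,\eta]$, and it therefore suffices to show $g'(t)\ge 0$ for a.e.\ $t$.

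Differentiating and using $\dot y_x(t)=f_i(y_x(t),\alpha_i(t))$ for a.e.\ $t$, one finds
\[
g'(t) = e^{-\lambda t}\Big[\partial u_i(y_x(t))\cdot f_i(y_x(t),\alpha_i(t)) - \lambda u_i(y_x(t)) + \ell_i(y_x(t),\alpha_i(t))\Big],
\]
so that $g'(t)\ge 0$ is equivalent to the pointwise bound $\lambda u_i(y_x(t)) - f_i(y_x(t),\alpha_i(t))\cdot\partial u_i(y_x(t)) - \ell_i(y_x(t),\alpha_i(t))\le 0$. Since $u_i$ is $C^1$, its viscosity subsolution property for~\eqref{001} becomes a classical pointwise inequality (test with $\varphi=u_i$): at points of $\cP_i\backslash\Gamma$ we get $\lambda u_i+H_i(\cdot,\partial u_i)\le 0$, and at points of $\Gamma$ we get $\lambda u_i+H_i^+(\cdot,\partial u_i)\le 0$. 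In either case the desired bound follows by taking $a=\alpha_i(t)$ inside the supremum defining the Hamiltonian, \emph{provided} $\alpha_i(t)$ is an admissible maximizer there.

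The step I expect to be the main obstacle is precisely the case $y_x(t)\in\Gamma$, where the equation switches to the restricted Hamiltonian $H_i^+$, whose supremum ranges only over $A_i^+(y_x(t))=\{a:f_i(y_x(t),a)\cdot e_i\ge 0\}$. To handle it I would consider the time-set $E:=\{t\in[0,\eta]:y_x(t)\in\Gamma\}=\{t:y_x^i(t)=0\}$ and use the elementary fact that the absolutely continuous, nonnegative function $t\mapsto y_x^i(t)$ has vanishing derivative a.e.\ on its zero set. Hence $f_i(y_x(t),\alpha_i(t))\cdot e_i=\dot y_x^i(t)=0$ for a.e.\ $t\in E$, which shows $\alpha_i(t)\in A_i^+(y_x(t))$ there; thus $a=\alpha_i(t)$ is admissible in the supremum defining $H_i^+$, and the subsolution inequality at $y_x(t)\in\Gamma$ yields the bound. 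On the complementary set $[0,\eta]\backslash E$ one has $y_x(t)\in\cP_i\backslash\Gamma$ and the interior inequality with $H_i$ applies verbatim.

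Combining the two cases gives $g'(t)\ge 0$ for a.e.\ $t\in[0,\eta]$, and integrating produces $g(\eta)\ge g(0)$, which is the claim. I note that the $C^1$ hypothesis is exactly what lets one differentiate $g$ classically and avoid test-function bookkeeping; for a merely continuous viscosity subsolution the same scheme goes through after replacing $u_i$ by a sup-convolution regularization (Lipschitz, and a subsolution up to a vanishing error, as in \cite{Oudet2016_these}) and passing to the limit in the regularization parameter.
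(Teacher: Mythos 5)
Your reduction to the monotonicity of $g$, and your treatment of the set $E=\{t:y_x^i(t)=0\}$ (the observation that $\dot y_x^i=0$ a.e.\ on $E$, so that $\alpha_i(t)\in A_i^+(y_x(t))$ and the restricted Hamiltonian $H_i^+$ becomes usable) are exactly the right ingredients. But there is a genuine gap in how you use the regularity hypothesis. The lemma assumes only that $u_i$ is $C^1$ \emph{on} $\Gamma$, i.e.\ that the restriction $u_i|_\Gamma$ is $C^1$ as a function of $x^0$; off $\Gamma$, $u_i$ is merely a continuous (Lipschitz near $\Gamma$, by Lemma~\ref{lem:continuity-sub}) viscosity subsolution of \eqref{001}. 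This is not a pedantic distinction: in the second proof of Theorem~\ref{thm3} the lemma is applied to $u_i^{\varepsilon,\mu}$, built in Lemma~\ref{lem:regu} by mollifying \emph{only in the $e_0$ direction}, and that function is $C^1$ along $\Gamma$ but only Lipschitz in the transverse variable $x^i$. Your argument differentiates $t\mapsto u_i(y_x(t))$ by the chain rule at a.e.\ $t$, in particular at times when $y_x(t)\in\cP_i\backslash\Gamma$; this step has no justification when $u_i$ is not differentiable off $\Gamma$ (Rademacher-a.e.\ differentiability in the plane says nothing along a fixed trajectory, which is a null set). Your closing remark that one can "replace $u_i$ by a sup-convolution regularization and pass to the limit" does not close the gap; it points at the actual difficulty: one cannot regularize in the $e_i$ direction near $\Gamma$, because $\cP_i$ is a half-plane and transverse convolution would need values of $u_i$ outside $\cP_i$, while a boundary regularization need not remain a subsolution of the $H_i^+$-condition on $\Gamma$. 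This is precisely why the paper's regularization is tangential only, and why the lemma is stated with the weak hypothesis your proof bypasses.

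For the record, the paper gives no proof of this lemma (it defers to \cite[Section 3.2.3]{Oudet2016_these} and \cite{AOT2015}), and the argument there works under the weak hypothesis roughly as follows; much of your skeleton survives. Split $[0,\eta]$ into $E$ and its complement. The complement is a countable union of open intervals on which $y_x$ stays in the open half-plane; there the classical interior sub-optimality principle for continuous subsolutions of convex Hamiltonians applies (local full mollification is legitimate because on compact subintervals one stays at positive distance from $\Gamma$), giving $g'\ge 0$ a.e.\ there. On $E$ one argues, as you do, that $f_i(y_x(t),\alpha_i(t))\cdot e_i=0$ a.e., but the chain rule is replaced by two facts: (i) since $u_i$ is Lipschitz near $\Gamma$ and $u_i|_\Gamma\in C^1(\Gamma)$, at a.e.\ density point $t$ of $E$ the derivative of $t \mapsto u_i(y_x(t))$ equals $\frac{\partial u_i|_\Gamma}{\partial e_0}(y_x(t))\,\big(f_i(y_x(t),\alpha_i(t))\cdot e_0\big)$, computed along $E$; (ii) the viscosity inequality with $H_i^+$ is invoked not with $\varphi=u_i$ (not admissible here) but with the admissible test function $\varphi(z)=u_i|_\Gamma(z^0 e_0)+Cz^i$, with $C$ larger than the Lipschitz constant of $u_i$, which touches $u_i$ from above at points of $\Gamma$; the artificial transverse slope $C$ is harmless because it is multiplied by $f_i\cdot e_i=0$. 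Combining the two regimes gives $g'\ge 0$ a.e.\ for the Lipschitz function $g$, hence the claim. As written, your proof is correct only under the stronger, unwarranted assumption $u_i\in C^1(\cP_i)$, and so it does not prove the lemma in the form in which the paper needs it.
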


\begin{lem} \label{lem:regu}
Assume $[A]$ and $[A3]$ and that $ u_i \in C (\cP_i) $ is bounded for all $ i\in \{1,\ldots,N\} $. Let $ \{\rho_\varepsilon\}_{\varepsilon} $ be a sequence of mollifiers defined on $ \R $ as follows
\[ 
\rho_\varepsilon (x) = \dfrac{1}{\varepsilon} \rho (\dfrac{x}{\varepsilon}),
\]
where
\[ 
\rho \in C^{\infty} (\R,\R^+),\quad \int_\R \rho(x) dx =1 \quad \text{and } \supp(\rho) \subset [-1,1].   
\]
We consider the function $ u_i^\varepsilon $ defined on $ \cP_i $ by
\[
u_i^\varepsilon  = u_i \star \rho_\varepsilon (x) = \int_\R u_i(x^i e_i + (x^0 - t)e_0 ) p_\varepsilon (t) dt, \quad \text{where } x=(x^i,x^0). 
\]
Then $ u_\varepsilon $ converges uniformly to $ u $  in $ L^\infty (\cP_i, \R) $ and there exists a function $ m:(0,+\infty) \rightarrow (0,+\infty) $ such that $ \lim_{\varepsilon\rightarrow 0 } m(\varepsilon) = 0 $ and the function $ u_\varepsilon - m(\varepsilon) $ is a viscosity subsolution of~\eqref{001} on a neighborhood of $ \Gamma $.
\end{lem}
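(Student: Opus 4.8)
The plan is to realize $u_i^\varepsilon$ as a superposition of \emph{tangential} translates of $u_i$ and to push the viscosity subsolution property through the convolution by convexity, paying only a small $x$-dependence penalty that is then absorbed into the zeroth order term. First I would record regularity and convergence: by Lemma~\ref{lem:continuity-sub} the subsolution $u_i$ is Lipschitz with some constant $L_0$ in $B(\Gamma,r)\cap\cP_i$. Writing $v_t(x):=u_i(x-te_0)$ we have $u_i^\varepsilon=\int_\R v_t\,\rho_\varepsilon(t)\,dt$, and since $\supp\rho_\varepsilon\subset[-\varepsilon,\varepsilon]$,
\[
|u_i^\varepsilon(x)-u_i(x)|\le\sup_{|t|\le\varepsilon}|u_i(x-te_0)-u_i(x)|\le L_0\varepsilon
\]
near $\Gamma$, which gives the uniform convergence; the same computation shows $u_i^\varepsilon$ is itself $L_0$-Lipschitz, since translation in $e_0$ does not change Lipschitz constants.

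Next come three structural ingredients. First, translation in the $e_0$-direction maps $\cP_i\backslash\Gamma$ and $\Gamma$ into themselves, so each $v_t$ is a viscosity subsolution of~\eqref{001} with $H_i(x,\cdot)$ replaced by $H_i(x-te_0,\cdot)$ in the interior and $H_i^+(x,\cdot)$ replaced by $H_i^+(x-te_0,\cdot)$ on $\Gamma$. Second, for fixed $x$ both $H_i(x,\cdot)$ and $H_i^+(x,\cdot)$ are convex in $p$, being suprema of affine functions. Third, from $[A1]$ one gets $|H_i(x,p)-H_i(y,p)|\le L|x-y|(1+|p|)$; for $H_i^+$ the state-dependent control set $A_i^+(x)$ must be controlled, and here the strong controllability $[A3]$ guarantees that $\textsc{FL}_i(x)$ contains a ball in the velocity variable, so that its intersection with the fixed half-space $\{\zeta\cdot e_i\ge0\}$ varies continuously, yielding a modulus $\omega$ with $|H_i^+(x,p)-H_i^+(y,p)|\le\omega(|x-y|)(1+|p|)$.

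With these in hand, the interior inequality is routine. Since $u_i^\varepsilon$ is Lipschitz it is differentiable a.e., with $\partial u_i^\varepsilon(x)=\int\partial v_t(x)\,\rho_\varepsilon(t)\,dt$ at a.e.\ $x$; applying Jensen's inequality in $p$, the a.e.\ subsolution inequality for each $v_t$, and the third ingredient with $|t|\le\varepsilon$ and $|\partial v_t|\le L_0$, I obtain $\lambda u_i^\varepsilon(x)+H_i(x,\partial u_i^\varepsilon(x))\le C\varepsilon$ a.e.\ in $(\cP_i\backslash\Gamma)\cap B(\Gamma,r)$ with $C=L(1+L_0)$. A Lipschitz function satisfying a convex Hamilton--Jacobi inequality a.e.\ is a viscosity subsolution, and subtracting $m(\varepsilon):=C\varepsilon/\lambda$ absorbs the error so that $u_i^\varepsilon-m(\varepsilon)$ satisfies the interior equation exactly; clearly $m(\varepsilon)\to0$ as $\varepsilon\to0$.

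The main obstacle is the boundary inequality on $\Gamma$, which the a.e.\ argument cannot see because $\Gamma$ is $\cP_i$-negligible; there one must test $u_i^\varepsilon$ from above directly and transfer the $H_i^+$-inequalities of the translates $v_t$ across the convolution. Because the mollification is purely tangential to $\Gamma$, this is carried out by the same convexity mechanism, the only genuinely delicate point being the state dependence of $A_i^+(x)$, which is precisely what the third ingredient and $[A3]$ are designed to neutralize. This is the argument of Ishii~\cite{Ishii2013} adapted to the junction in \cite[Lemma 3.3.9]{Oudet2016_these}, and I would invoke it to conclude that $u_i^\varepsilon-m(\varepsilon)$ is a viscosity subsolution of~\eqref{001} on a neighborhood of $\Gamma$.
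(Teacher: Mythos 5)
Your proposal is correct and follows essentially the same route as the paper: the paper gives no in-text proof of Lemma~\ref{lem:regu}, but instead cites \cite[Sections 3.2.3--3.2.4]{Oudet2016_these}, Lions~\cite{Lions1982} and Barles \& Jakobsen~\cite{BJ2002}, and those references carry out precisely your argument --- tangential mollification viewed as a superposition of $e_0$-translates, Jensen's inequality via convexity of $H_i$ and $H_i^+$ in $p$, the Lipschitz bound near $\Gamma$ from Lemma~\ref{lem:continuity-sub}, and absorption of the $O(\varepsilon)$ state-dependence error into $m(\varepsilon)$. Your explicit interior computation, together with deferring the genuinely delicate $H_i^+$-inequality on $\Gamma$ to the adaptation of Ishii~\cite{Ishii2013} in \cite{Oudet2016_these}, is consistent with (indeed more detailed than) the paper's own treatment.
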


\begin{rem}
For a proof of Lemma~\ref{lem:sub-optimal} and Lemma~\ref{lem:regu} see  \cite[Section 3.2.3 and Section 3.2.4]{Oudet2016_these}. 
The proof in Achdou, Oudet \& Tchou \cite{AOT2015} is adopted to prove Lemma~\ref{lem:sub-optimal} and finally, the existence of a function $ m: (0,+\infty) \rightarrow (0,+\infty) $ with $ m(0^+) =0 $ such that $ u_i^\varepsilon - m (\varepsilon) $ is a subsolution of~\eqref{001} on a neighborhood of $ \Gamma $ was proved by Lions~\cite{Lions1982} or Barles \& Jakobsen~\cite{BJ2002}.    
	
\end{rem}
Next, we have the important property of viscosity supersolution.
\begin{lem}\label{lem:super-optimal}
	Let assumptions $[A]$ and $[A3]$ hold  and let $i\in \{1,\ldots,N\}$ and $\bar{x}\in \Gamma$.
	Let $ \varphi \in C^1(\cP_i) $ be such that $ w_i-\varphi $ has a local maximum point at $ x $.
	If the function $ w_i $ satisfies
	\begin{equation}\label{501} 
	w_i(\bar{x}) < w_\Gamma(\bar{x}),
	\end{equation}
	then either (\textbf{H1}) or (\textbf{H2}) below is true:
	\begin{description}
		\item[(H1)]There exists a sequence $ \{\eta_k\} $ of positive real numbers such that $ \lim_{k\rightarrow + \infty} \eta_k = \eta >0$ and a sequence $ \{x_k\} \subset \cP_i \backslash \Gamma $ such that $ \lim_{k\rightarrow \infty} x_k = \bar{x} $ and for each $ k $, there exists a control law $ \alpha_i^k $ such that the corresponding trajectory $ y_{x_k} (s) \in \cP_i $ for all $ s\in [0,\eta_k] $ and
		\[ 
		w_i(x_k) \ge \int_0^{\eta_k} \ell_i \left( y_{x_k}(s), \alpha_i^k (s)\right) e^{-\lambda s} ds 
		+ w_i (y_{x_k}  (\eta_k)) e^{-\lambda \eta_k}. 
		\] 
		
		\item [(H2)] 
		\[ 
		\lambda w_i(\bar{x}) + \max_{a \in A^\Gamma_i} \{- f_i(\bar{x},a) \cdot \partial \varphi (\bar{x}) - \ell_{i} (\bar{x},a) \} \ge 0.
		\]
	\end{description}
	
\end{lem}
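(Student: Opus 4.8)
The plan is to read off from the supersolution property the control–theoretic alternative, following the blow–up analysis already carried out for the value function in Lemma~\ref{lem:value-super-sol}. I would argue by contraposition: assuming that (H1) fails, I will establish the tangential Hamiltonian inequality (H2). Since $W$ is continuous and $w_i(\bar x)<w_\Gamma(\bar x)$, we have $w_i<w_\Gamma$ on a neighborhood of $\bar x$ in $\Gamma$, so in the $i$-th interface inequality the entry $-\lambda w_\Gamma(\bar x)$ cannot realize the maximum (otherwise $-\lambda w_\Gamma(\bar x)\ge-\lambda w_i(\bar x)$ would force $w_i(\bar x)\ge w_\Gamma(\bar x)$). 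The supersolution property of $w_i$ therefore yields $\lambda w_i(\bar x)+H_i^+(\bar x,\partial\varphi(\bar x))\ge 0$, and by compactness of $A_i^+(\bar x)$ there is a maximizer $a^\star\in A_i^+(\bar x)$, necessarily with $f_i(\bar x,a^\star)\cdot e_i\ge 0$, satisfying $-f_i(\bar x,a^\star)\cdot\partial\varphi(\bar x)-\ell_i(\bar x,a^\star)\ge-\lambda w_i(\bar x)$. This is the first of two building blocks, carrying nonnegative normal speed.

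The second step produces a companion element of opposite normal sign. I would invoke the classical super–optimality principle for $w_i$ as an interior supersolution of $\lambda w_i+H_i(x,\partial w_i)=0$ on $\cP_i\setminus\Gamma$ (see \cite{BD1997,Barles1994,AOT2015,Oudet2016_these}): for a fixed small horizon and for any sequence $x_k\in\cP_i\setminus\Gamma$ with $x_k\to\bar x$, there exist controls $\alpha^k$ whose trajectories $y_{x_k}$ satisfy the super–optimality inequality up to their first exit time $\theta_k$ from $\cP_i\setminus\Gamma$. If some subsequence had $\theta_k\ge c>0$, those controls and trajectories would be exactly what (H1) asserts, contradicting our assumption; hence $\theta_k\to 0$ and $y_{x_k}(\theta_k)\in\Gamma$.

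The heart of the argument is then a blow–up at scale $\theta_k$, structurally identical to the one leading to~\eqref{540}. By boundedness of $f_i,\ell_i$ the averages $\theta_k^{-1}\int_0^{\theta_k}(f_i,\ell_i)(y_{x_k}(s),\alpha^k(s))\,ds$ converge, along a subsequence, to a point of $\textsc{FL}_i(\bar x)$, which by the closedness and convexity in $[A2]$ equals $(f_i(\bar x,\bar a),\ell_i(\bar x,\bar a))$ for some $\bar a\in A_i$; dividing the super–optimality inequality by $\theta_k$ and letting $k\to\infty$ (using $[A1]$ as in the passage to~\eqref{540}) gives $-f_i(\bar x,\bar a)\cdot\partial\varphi(\bar x)-\ell_i(\bar x,\bar a)\ge-\lambda w_i(\bar x)$. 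Moreover the averaged normal speed is $\theta_k^{-1}\bigl(y_{x_k}(\theta_k)\cdot e_i-x_k\cdot e_i\bigr)=-\theta_k^{-1}\,(x_k\cdot e_i)\le 0$, so $f_i(\bar x,\bar a)\cdot e_i\le 0$.

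It remains to interpolate to a genuinely tangential control, which is the main obstacle. The two elements $(f_i(\bar x,a^\star),\ell_i(\bar x,a^\star))$ and $(f_i(\bar x,\bar a),\ell_i(\bar x,\bar a))$ both lie in the convex set $\textsc{FL}_i(\bar x)$ and have $e_i$–components of opposite sign ($\ge 0$ and $\le 0$). Hence the convex combination whose $e_i$–component vanishes again belongs to $\textsc{FL}_i(\bar x)$, so by $[A2]$ it equals $(f_i(\bar x,\tilde a),\ell_i(\bar x,\tilde a))$ for some $\tilde a\in A_i$ with $f_i(\bar x,\tilde a)\cdot e_i=0$, i.e. $\tilde a\in A_i^\Gamma(\bar x)$ in the sense of~\eqref{eq:A-Gamma}. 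Since $(\zeta,\xi)\mapsto-\zeta\cdot\partial\varphi(\bar x)-\xi$ is affine, its value at $\tilde a$ is the same convex combination of its values at $a^\star$ and $\bar a$, both of which are $\ge-\lambda w_i(\bar x)$; therefore $-f_i(\bar x,\tilde a)\cdot\partial\varphi(\bar x)-\ell_i(\bar x,\tilde a)\ge-\lambda w_i(\bar x)$ with $\tilde a\in A_i^\Gamma(\bar x)$, which is precisely (H2). The delicate point is exactly this bookkeeping: controlling the sign of the normal component of the blown–up dynamics and then using the convexity $[A2]$ to upgrade an outward–pointing limit control into a tangential one realizing the bound.
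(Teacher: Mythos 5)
Your proposal does not follow the paper's route: the paper's proof is a two-line localization (by \eqref{501} and continuity, the term $-\lambda w_\Gamma$ is strictly inactive near $\bar x$, so $w_i$ is a supersolution of the entry-cost-free problem $\lambda w_i+H_i^+\ge 0$ on $\Gamma$ near $\bar x$) followed by a citation of \cite[Theorem 3.2.4]{Oudet2016_these}, which \emph{is} the dichotomy (H1)/(H2). You instead attempt to reprove that dichotomy from scratch. Two of your three ingredients are sound: extracting a maximizer $a^\star\in A_i^+(\bar x)$ from the interface supersolution inequality is correct (granting that ``local maximum'' in the statement is a typo for ``local minimum'', which your argument silently assumes, and which is how the lemma is actually invoked in the second proof of Theorem~\ref{thm3}), and the final interpolation of two elements of $\textsc{FL}_i(\bar x)$ with normal components of opposite sign into a tangential element, using the convexity in $[A2]$ and affinity of $(\zeta,\xi)\mapsto -\zeta\cdot\partial\varphi(\bar x)-\xi$, is a clean way to conclude once both one-sided bounds are available.

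The genuine gap is in your second building block, at the sentence ``dividing the super-optimality inequality by $\theta_k$ and letting $k\to\infty$ gives $-f_i(\bar x,\bar a)\cdot\partial\varphi(\bar x)-\ell_i(\bar x,\bar a)\ge-\lambda w_i(\bar x)$.'' To pass from $w_i$ to $\varphi$ in that inequality you can only use the local-minimum property $w_i\ge\varphi$ near $\bar x$ (normalized so $w_i(\bar x)=\varphi(\bar x)$): this bounds $w_i(y_{x_k}(\theta_k))$ from below correctly, but for $w_i(x_k)$ it goes the wrong way. Writing $\Delta_k:=(w_i-\varphi)(x_k)\ge 0$, what the blow-up actually yields along your subsequence is
\begin{equation*}
-\zeta\cdot\partial\varphi(\bar x)-\xi \;\ge\; -\lambda w_i(\bar x)\;-\;\limsup_{k}\frac{\Delta_k}{\theta_k},
\end{equation*}
and the defect $\limsup_k \Delta_k/\theta_k$ need not vanish: $\theta_k$ is an exit time tied to $x_k$, with only the lower bound $\theta_k\ge (x_k\cdot e_i)/M$, while $\Delta_k$ can be of exact order $x_k\cdot e_i$. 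Concretely, if $w_i=\varphi+x\cdot e_i$ near $\bar x$ (a perfectly admissible supersolution profile), your limit only gives the inequality for the shifted slope $\partial\varphi(\bar x)+e_i$, which is strictly weaker than the one you need whenever $f_i(\bar x,\bar a)\cdot e_i<0$. This is exactly the difficulty that Lemma~\ref{lem:value-super-sol} (your declared model) avoids: there, Lemma~\ref{lem:remain} supplies a horizon $\bar\tau$ \emph{independent} of the starting point, so the limits are iterated ($\varepsilon\to 0$ first, then $\tau\to 0$) and the analogous errors are $o_\varepsilon(1)/\tau$ with $\tau$ fixed; that device is unavailable in your regime $\theta_k\to 0$. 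Repairing the argument requires an additional idea you do not have — for instance penalizing the test function in the normal direction (every $\varphi+C\,(x\cdot e_i)$, $C>0$, is again admissible), analyzing $\liminf_{x\to\bar x}(w_i-\varphi)(x)/(x\cdot e_i)$, and exploiting at the very end that controls in $A_i^\Gamma(\bar x)$ do not see the $e_i$-component of the gradient — which is essentially the content of the proof of the cited theorem of Oudet (following Barles--Briani--Chasseigne) that the paper invokes instead of reproving.
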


\begin{proof}
	Since $ w_i $ and $ w_\Gamma $ are continuous on $ \Gamma $, from \eqref{501}, there exists $ \varepsilon>0 $ such that  $ \lambda w_i (x) < u_\Gamma (x)  $ for all $ x\in \Gamma  $ and $ | x - \bar{x} | < \varepsilon $. From the definition of supersolution, we get
	\[
	\lambda w_i (x) + H^+_i (x, \partial w_i (x)) \ge 0, \quad \text{for all }   x\in \Gamma \cap B(\bar{x}, \varepsilon) ,
	\]
	in the viscosity sense. Hence, from \cite[Theorem 3.2.4]{Oudet2016_these}, the proof is complete.
\end{proof}
We are ready to give the second proof of Theorem~\ref{thm3}.

\begin{proof}[A second proof of Theorem~\ref{thm3}]
	Let $ u_i^\varepsilon $ be the approximation of $ u_i $ given by Lemma~\ref{lem:regu}.
	Similarly to the first step of the proof of Theorem~\ref{thm3} (Section~\ref{sec6}), we set $ u^{\varepsilon,\mu}_{i} := \mu (u^\varepsilon_i - m(\varepsilon)) + (1-\mu) \psi_i $ for all $ i\in \{1,\ldots,N\} $ and $ u^\mu_\Gamma := \mu u_\Gamma + (1-\mu)\psi_\Gamma $, $ \mu \rightarrow 1^- $ then $ (u^{\varepsilon,\mu}_{1}, \ldots, u^{\varepsilon,\mu}_{N}, u^\mu_\Gamma) $ is a viscosity subsolution of~\eqref{eq:HJmain}. 
	Let $ M^{\varepsilon,\mu}_i $ and $ M^{\mu}_\Gamma $ be respectively the maximal value of $ u^{\varepsilon,\mu}_{i} - w_i$ and $ u^\mu_\Gamma - w_\Gamma $ which are reached at some point $ \bar{x}^{\varepsilon,\mu}_i \in \cP_i$ and $ \bar{x}^\mu_\Gamma \in \Gamma $. 
	We argue by contradiction, through considering the two following cases:
	
	\begin{enumerate}
		\item [Case A] Assume that $ M_i^{\varepsilon,\mu} > M_\Gamma^\mu$ and $ M_i^{\varepsilon,\mu} > 0 $.  If $  \bar{x}^{\varepsilon,\mu}_i $, the proof is done by using the classical techniques. Therefore, we only focus on the case when $ \bar{x}^{\varepsilon,\mu}_i  \in \Gamma$.
		From Lemma~\ref{lem:continuity-sub}, $ u^{\varepsilon,\mu}_{i} $ is Lipschitz in a neighborhood of $ \Gamma $.
		Moreover, from Lemma~\ref{lem:regu}, $ u^{\varepsilon,\mu}_{i}|_\Gamma \in C^1(\Gamma) $.
		Then, we can construct two test-function $\psi, \varphi\in C^1(\cP_i) $ such that 
		\[
		\psi (x) =  u^{\varepsilon,\mu}_{i} (0,x^0) + Cx^i, \quad \varphi (x) = u^{\varepsilon,\mu}_{i} (0,x^0) - Cx^i,
		\]
		where $ x=(x^i,x^0) $ and $ C $ is a positive constant which is large enough. 
		This implies that $\psi|_\Gamma =  \varphi|_\Gamma = u^{\varepsilon,\mu}_{i}|_\Gamma $,  $ \psi $ remains above and $ \varphi $ remains below $ u^{\varepsilon,\mu}_{i} $ on a neighborhood of $ \Gamma $.
		Since $ M^{\varepsilon,\mu}_i>0 $, it is easy to see that $ w_i - \varphi $ has a local minimum at $ \bar{x}^{\varepsilon,\mu}_i $.
		Next, we claim that  $ w_i( \bar{x}^{\varepsilon,\mu}_i ) < w_\Gamma( \bar{x}^{\varepsilon,\mu}_i )$. Indeed, if it does not hold, then we have
		\[ 
		w_\Gamma ( \bar{x}^{\varepsilon,\mu}_i) \le w_i ( \bar{x}^{\varepsilon,\mu}_i) < u_i^{\varepsilon,\mu} ( \bar{x}^{\varepsilon,\mu}_i) \le u_\Gamma^\mu ( \bar{x}^{\varepsilon,\mu}_i),
		\]
		and this implies $ M_i^{\varepsilon,\mu} \le M_\Gamma^\mu $, the desired contradiction.
		Hence, from Lemma~\ref{lem:super-optimal}, we have the two following cases:
		\begin{description}
			\item[(H1)] $ \lambda w_i (\bar{x}^{\varepsilon,\mu}_i) + \max_{a \in A^\Gamma_i} \{- f_i(\bar{x}^{\varepsilon,\mu}_i,a) \cdot \partial \varphi (\bar{x}^{\varepsilon,\mu}_i) - \ell_{i} (\bar{x}^{\varepsilon,\mu}_i,a) \} \ge 0$.
			Moreover, since $ u^{\varepsilon,\mu}_{i} $ is a subsolution of~\eqref{eq:HJG5}, $   u^{\varepsilon,\mu}_{i}- \psi$ has a local maximum point at $ \bar{x}^{\varepsilon,\mu}_i $, $\psi|_\Gamma =   u^{\varepsilon,\mu}_{i}|_\Gamma $ and $ H_i(\bar{x}^{\varepsilon,\mu}_i, p)  \ge \max_{a\in A_i^\Gamma}\{-f_i (\bar{x}^{\varepsilon,\mu}_i, a) \cdot p - \ell_i (\bar{x}^{\varepsilon,\mu}_i, a) \}$, we get
			\[
			\lambda u^{\varepsilon,\mu}_{i} (\bar{x}^{\varepsilon,\mu}_i) + \max_{a \in A^\Gamma_i} \{- f_i(\bar{x}^{\varepsilon,\mu}_i,a) \cdot \partial \varphi (\bar{x}^{\varepsilon,\mu}_i) - \ell_{i} (\bar{x}^{\varepsilon,\mu}_i,a) \} \le 0.
			\]
			Hence, $ u^{\varepsilon,\mu}_{i} (\bar{x}^{\varepsilon,\mu}_i)  \le w_i  (\bar{x}^{\varepsilon,\mu}_i)$ and thus $ M^{\varepsilon,\mu}_i \le 0 $, which leads to a contradiction.
			
			\item[(H2)] With the notation in Lemma~\ref{lem:super-optimal}, we have
			\[
			w_i(x_k) \ge \int_0^{\eta_k} \ell_i \left( y_{x_k}(s), \alpha_i^k (s)\right) e^{-\lambda s} ds 
			+ w_i (y_{x_k}  (\eta_k)) e^{-\lambda \eta_k}.  
			\]
			On the other hand, from Lemma~\ref{lem:sub-optimal}
			\[
			u^{\varepsilon,\mu}_{i} (x_k) \le \int_0^{\eta_k} \ell_i \left( y_{x_k}(s), \alpha_i^k (s)\right) e^{-\lambda s} ds 
			+ u^{\varepsilon,\mu}_{i} (y_{x_k}  (\eta_k)) e^{-\lambda \eta_k}.  
			\]
			This implies
			\[
			u^{\varepsilon,\mu}_{i} (x_k) - w_i (x_k) \le ( u^{\varepsilon,\mu}_{i}(\eta_k) - w_i (\eta_k) ) e^{-\lambda \eta_k}.
			\]
			Letting $ k\rightarrow \infty $, one gets $ M^{\varepsilon,\mu}_i \le M^{\varepsilon,\mu}_i e^{-\lambda \eta} $ and thus $ M^{\varepsilon,\mu}_i<0 $, which is a contradiction.
		\end{description}
		
		\item [Case B] Assume that $ M_\Gamma^{\mu} \ge \max_j \{M^{\varepsilon,\mu}_j\} $ and $  M_\Gamma^{\mu}>0 $. If the case $ w_\Gamma (\bar{x}_\Gamma^\mu) < \min_j \{w_j(\bar{x}_\Gamma^\mu) + c_j(\bar{x}_\Gamma^\mu)\} $ occurs, we can apply the similar arguments as in Case B.1 of the first proof of Theorem~\ref{thm3} to obtain $ M_\Gamma^\mu \le 0 $, the desired contradiction. Otherwise, if the following case holds:
		\begin{equation} \label{018}
		w_\Gamma (\bar{x}_\Gamma^\mu) \ge  \min_j \{w_j(\bar{x}_\Gamma^\mu) + c_j(\bar{x}_\Gamma^\mu)\} = w_k(\bar{x}_\Gamma^\mu) + c_k(\bar{x}_\Gamma^\mu), 
		\end{equation}
		for some $ k\in \{1,\ldots,N\} $ and by the fact that 
		\[ 
		\min_j \{u_j^{\varepsilon,\mu}(\bar{x}_\Gamma^\mu) + c_j(\bar{x}_\Gamma^\mu)\} \ge u_\Gamma^\mu (\bar{x}_\Gamma^\mu) > w_\Gamma (\bar{x}_\Gamma^\mu),
		\] we obtain $ M_\Gamma^\mu = M_k^{\varepsilon,\mu} = w_k(\bar{x}_\Gamma^\mu) - u_k^{\varepsilon,\mu}(\bar{x}_\Gamma^\mu)$.
		Now we repeat Case A with the index $ i $ replacing $ k $, $ \bar{x}_i^{\varepsilon,\mu} $ replacing  $ \bar{x}_\Gamma^{\mu} $ and note that  $ w_k( \bar{x}_\Gamma^{\mu}) < w_\Gamma ( \bar{x}_\Gamma^{\mu}) $ because of~\eqref{018}. This implies $ M_k^{\varepsilon,\mu} = M_\Gamma^\mu \le 0$, which leads to a contradiction.
	\end{enumerate}
	
Finally, we conclude by letting $ \varepsilon\rightarrow 0  $ and $ \mu \rightarrow 1 $.
 \end{proof}


\end{document}